\documentclass[10pt, xcolor = {usenames,dvipsnames}]{article}

\pagestyle{plain}
\textwidth=16cm
\oddsidemargin=0.16cm
\evensidemargin=0.16cm

\usepackage[english]{babel}
\usepackage[utf8]{inputenc} 
\usepackage{amsfonts}
\usepackage[square,sort,comma,numbers]{natbib}
\bibliographystyle{abbrvnat}
\bibpunct{[}{]}{;}{a}{,}{,}
\usepackage[hidelinks]{hyperref}
\usepackage{xfrac}
\usepackage{tikz}
\usetikzlibrary{calc}

\hfuzz = 0.3pt

\usepackage{mathtools}
\usepackage{amsthm}
\usepackage{amssymb}
\usepackage{amsmath}
\usepackage{needspace}
\usepackage{etoolbox}
\usepackage{lipsum}
\usepackage{authblk}
\usepackage{mathrsfs}
\usepackage{bbold}
\usepackage{graphicx}
\usepackage{subcaption}
\usepackage{color,soul}

\newcounter{theo}
\newtheorem{lmm}[theo]{Lemma}
\newtheorem{thr}{Theorem}
\newtheorem{stm}[theo]{Statement}
\newtheorem{crl}[theo]{Corollary}
\newtheorem{prp}[theo]{Proposition}

\newtheorem{defn}[theo]{Definition}
\newtheorem{rmrk}[theo]{Remark}

\newenvironment{customthm}[1]
  {\innercustomthm}
  {\endinnercustomthm}

\newenvironment{customprp}[1]
  {\innercustomprp}
  {\endinnercustomprp}

\DeclareMathOperator{\arccot}{arccot}
\DeclareMathOperator{\arccosh}{arccosh}

\DeclareMathOperator*{\argmin}{arg\,min}
\newcommand{\R}{\mathbb{R}}
\newcommand{\Compl}{\mathbb{C}}

\newcommand{\Z}{\mathbb{Z}}
\newcommand{\T}{\mathbb{T}}
\newcommand{\D}{\mathbb{D}}

\newcommand{\eps}{\varepsilon}

\newcommand\CnjCl[1]{#1}
\newcommand{\I}{\mathcal{I}}
\newcommand{\PartF}{\mathcal{Z}}
\newcommand{\PartFunc}[1]{\PartF\left(#1\right)}
\newcommand\Dyadic[1]{D_{#1}}

\newcommand{\LogSob}{\mathsf{H}}
\newcommand{\Sob}{H^1}
\newcommand{\MyHol}[2]{\mathsf{H\ddot{o}l}^{\,#1}\!\left(#2\right)}

\newcommand{\Diff}{\mathrm{Diff}}
\newcommand{\DiffG}{\widetilde{\mathrm{Diff}}}
\newcommand{\DiffAppl}{\mathsf{A}}
\newcommand{\SL}{\mathrm{PSL}}

\newcommand{\FMeas}[1]{\mathscr{M}_{#1}}
\newcommand{\FMeasSL}[1]{\widetilde{\mathscr{M}}_{#1}}

\newcommand{\Schw}{\mathcal{S}}

\newcommand{\A}{\mathsf{P}}

\renewcommand*{\d}{\mathop{}\!\mathrm{d}}
\newcommand{\WS}[3]{\mathcal{B}_{#1}^{\, #2, #3}} %
\newcommand{\Cfree}{C_{0,\text{free}}}
\newcommand{\Dom}{\Omega}
\newcommand{\Real}{\Re}

\newcommand{\vardbtilde}[1]{\widetilde{\raisebox{0pt}[0.85\height]{$\widetilde{#1}$}}}
\newcommand{\DiffGG}{\vardbtilde{\mathrm{Diff}}}

\makeatletter
\def\obs#1#2#3{\def\temp@expr{\big(#1; #2, #3\big)}\mathcal{O}\obs@}
\def\obs@{%
	\@ifnextchar{_}{\obs@sub}{
	\@ifnextchar{^}{\obs@sup}{\temp@expr}}}
\def\obs@sub#1#2{_{#2}\obs@}
\def\obs@sup#1#2{^{\, #2}\obs@}
\makeatother

\makeatletter
\def\dist#1#2{\def\temp@expr{(#1, #2)}\mathsf{d}\dist@}
\def\dist@{%
	\@ifnextchar{_}{\dist@sub}{
	\@ifnextchar{^}{\dist@sup}{\temp@expr}}}
\def\dist@sub#1#2{_{#2}\dist@}
\def\dist@sup#1#2{^{#2}\dist@}
\makeatother

\title{Large Deviations of the Schwarzian Field Theory}
\author{Ilya Losev\footnote{Mathematical Institute, University of Oxford, Andrew Wiles Building, Radcliffe Observatory Quarter, Woodstock Road, Oxford, OX2 6GG, UK.
E-mail: \url{ilya.losev@maths.ox.ac.uk}.}}

\date{May 25, 2026}

\begin{document}

\maketitle

\abstract{
We prove a large deviations principle for the probabilistic Schwarzian Field Theory at low temperatures.
We demonstrate that the good rate function is equal to the action of the Schwarzian Field Theory, and we find its minimisers.
In addition, we define an analogue of the H\"{o}lder condition on the functional space $\Diff^1(\T)/\SL(2, \R)$ in terms of cross-ratio observables, characterise them in terms of the usual H\"{o}lder property on the space of continuous functions, and deduce the corresponding compact embedding theorem.
We also show that the Schwarzian measure concentrates on functions satisfying the defined condition.
}

\section{Introduction and main results}\label{sect_introduction_mainr_results}
\subsection{Introduction}\label{sect_intro}
The Schwarzian Field Theory is a quantum field theory, which emerged in physics in the study of low-dimensional quantum gravity models.
In the context of the AdS/CFT correspondence, it is proposed as a holographic dual of Jackiw–Teitelboim (JT) gravity in the disk \citep{SaadShenkerStanford2019, NearlyAdS, JT_Wilson_Line}.
Moreover, the Schwarzian Field Theory also appears in the low energy limit of the Sachdev–Ye–Kitaev (SYK) random matrix model (e.g. see \citep{MaldacenaStanford} and \citep{KitaevJosephine}) and is connected to 2D Liouville Field Theory, infinite dimensional symplectic geometry, representation theory of the Virasoro algebra, 2D Yang-Mills, and other topics.

In \citep*{BLW}, we have defined and proved uniqueness of a finite measure on
$\mathrm{Diff}^{1}(\mathbb{T})/\mathrm{PSL}(2, \mathbb{R})$ which corresponds
to the Schwarzian Field Theory.
We have also constructed this measure and derived its partition function (i.e. total mass), following the plan proposed in \citep{BelokurovShavgulidzeExactSolutionSchwarz, BelokurovShavgulidzeCorrelationFunctionsSchwarz}. 
In another companion paper \citep{LosevCorr} we have computed correlation functions of cross-ratio observables, for which the corresponding Wilson lines are nonintersecting. 
In addition, we have shown that the obtained correlation functions characterise the measure uniquely. 
All these results were obtained using methods of stochastic analysis. 
Our results agree with the partition function formula derived in \citep{StanfordWittenFermionicLocalization} using the formal application of the Duistermaat–Heckman theorem on the infinite dimensional symplectic space $\Diff^1(\T)/\SL(2, \R)$, and the correlation function formulae obtained in \citep{ConformalBootstrap} using the conformal bootstrap and the DOZZ formula in a degenerate limit of 2D Liouville CFT.

In this work we continue studying probabilistic and analytic properties of the Schwarzian measure constructed in \citep*{BLW}.
We prove a large deviations principle, show that the corresponding good rate function is equal to the action of the Schwarzian Field Theory, and find its minimisers.
This gives a direct way to relate the Schwarzian measure to the action of the theory, which does not appear explicitly in the construction carried out in \citep*{BLW}.
Note that, alternatively we can see traces of the action in the corresponding version of Girsanov's Theorem proved in \citep*{BLW}.
However, the action does not appear there explicitly.

One of the tools developed here, which we think is of its own interest, is an analogue of the H\"{o}lder condition on the functional space $\Diff^1(\T)/\SL(2, \R)$.
The difficulty here is that elements of $\Diff^1(\T)/\SL(2, \R)$ are not functions (but rather just conjugacy classes) for which the notion of a value at a given point is not well defined. 
Thus, the usual H\"{o}lder condition for continuous functions does not make sense in this case.
We formulate a similar property in terms of cross-ratio observables, which are well defined on the quotient space, and prove that this property coincides with the usual H\"{o}lder condition if we fix the gauge (i.e., explicitly choose a representative in each conjugacy class).
This, in particular, allows us to prove a compact embedding theorem.
As an application of this tool, we show that the Schwarzian measure concentrates on functions satisfying the defined H\"{o}lder condition.

\medskip

Formally, the measure corresponding to the Schwarzian Field Theory is supported on the topological space $\Diff^1(\T)/\SL(2, \R)$ and is given by (see \citep[(1.1)]{StanfordWittenFermionicLocalization})
\begin{equation}\label{eq:1}
\d\FMeas{\sigma^2}\big(\phi\big) = 
\exp\left\{+\frac{1}{\sigma^2}\int_{0}^{1} 
\left[\Schw_\phi(\tau)+2\pi^2\phi'^{\, 2}(\tau)\right] \d\tau \right\}
\frac{\prod_{\tau \in [0,1)}\frac{\d\phi(\tau)}{\phi'(\tau)}}{\SL(2, \R)} ,
\end{equation} 
where $\Schw_{\phi}(\tau)$ is the Schwarzian derivative of $\phi$ given by
\begin{equation}
\Schw_{\phi}(\tau) = \Schw(\phi, \tau) = \left(\frac{\phi''(\tau)}{\phi'(\tau)}\right)'- \frac{1}{2} \left(\frac{\phi''(\tau)}{\phi'(\tau)}\right)^2.
\end{equation}
Here, $\T = [0, 1]/\{0\sim 1\}$ is the unit circle, $\Diff^1(\T)$ is the space of $C^1$ orientation preserving diffeomorphisms of $\T$, and $\SL(2, \R)$ is the group of M\"{o}bius transformations of the unit disk (i.e. conformal isomorphisms of the unit disk) restricted to the boundary which is identified with $\T$.
The group $\SL(2, \R)$ acts on $\Diff^1(\T)$ by post-compositions. 
Following \citep{StanfordWittenFermionicLocalization} we call it a right action, since in \citep{StanfordWittenFermionicLocalization} it is interpreted as an action on the inverse elements.
We denote the quotient of $\Diff^1(\T)$ by this action of $\SL(2, \R)$ by $\Diff^1(\T)/\SL(2, \R)$.
Heuristically, the formal density \eqref{eq:1} only depends on the orbit of this action and the quotient by $\SL(2,\R)$, therefore, makes sense.
Note, however, that even though the action and the theory are formally invariant under $\SL(2, \R)$ post-compositions, they are \textit{not} invariant under pre-compositions.
We recall the rigorous construction of the measure corresponding to the Schwarzian Field Theory carried out in \citep*{BLW} in Section~\ref{sect_meas_construct}.

Using terminology from physics, the expression in the exponential of \eqref{eq:1} is equal to minus $1/\sigma^2$ multiplied by the \emph{action}. 
In other words, the action is formally given by
\begin{equation}\label{eq_def_action_intro}
\I(\phi) 
= - \int_{0}^{1} \left[\Schw_\phi(\tau)+2\pi^2\phi'^{\, 2}(\tau)\right] \d\tau
=
\dfrac{1}{2}\displaystyle\int_{\T} 
\left[\left(\dfrac{\phi''(\tau)}{\phi'(\tau)}\right)^2-4\pi^2\phi'^{\, 2}(\tau)\right] \d\tau.
\end{equation}
Formally, it plays the defining role in the path integral formulation of the theory given by \eqref{eq:1}.
However, as it often happens with the probabilistic formulations of quantum field theories in the continuous setting, the action does not appear explicitly in the rigorous construction of the corresponding measure.
Instead, it is usually used merely as a guide for intuition.
One of the important questions in these situations is how to explicitly relate the rigorously constructed measure to its path integral formulation and, in particular, its action.
For the Schwarzian Theory it is possible to see traces of the action in the corresponding version of Girsanov's Theorem \citep*{BLW}.
In this work, we show that the action also naturally arises in the large deviations of the measure when $\sigma\to 0$, see Theorem \ref{thrLDP}. This provides an alternative way to recover the action from the constructed measures, albeit only in the asymptotic regime.
Note that large deviations principles can also be used to read off the action from probability measures in other examples, such as Gaussian path integrals (see, e.g. \citep[Theorem 3.4.12]{Deuschel_Stroock_LDP}), 2D Yang--Mills
\citep{Levy_Norris_YM_LDP}, and Schramm--Loewner Evolution
\citep{Wang_LDP, Peltola_Wang_LDP}.

We also study minimisers of the action \eqref{eq_def_action_intro} under various local constraints (Proposition \ref{prp_action_min_constrained}). 
In particular, we show that the global minimiser of \eqref{eq_def_action_intro} is the conjugacy class of the identity diffeomorphism (Theorem \ref{thrActionMin}).

One of the tools that we develop here, in order to prove the large deviations principle, is an analogue of the H\"{o}lder condition for the functional space $\Diff^1(\T)/\SL(2, \R)$ and which we apply to the Schwarzian measure $\d\FMeas{\sigma^2}$. 
From the construction of $\FMeas{\sigma^2}$, we expect that for a typical $\phi \sim \d\FMeas{\sigma^2}$, the function $\log\phi'$ should have the same regularity as Brownian motion.
In particular, it should be H\"{o}lder with parameter $\frac{1}{2}-\eps$.
However, we are working on the quotient space $\Diff^1(\T)/\SL(2, \R)$, and so $\phi$ is defined only up to composition with M\"{o}bius transformations, and thus the value of $\log\phi'$ is not well defined.
Therefore, the usual H\"{o}lder condition does not make sense in this situation.
One of the possible solutions is to fix the gauge, that is, choose a representative in every conjugacy class. 
In order to fix a representative in every $\SL(2, \R)$ orbit, it is sufficient, for example, to fix three values of $\phi$ and/or $\phi'$.
This, however, might not look like a natural definition since it is hard to express this condition directly in terms of natural observables on $\Diff^1(\T)/\SL(2, \R)$.
In this work we reformulate this property in a natural way in terms of cross-ratio observables
\begin{equation}\label{eqDefObs_intro}
\obs{\phi}{s}{t} = \frac{\pi\sqrt{\phi'(t)\phi'(s)}}{\sin\Big(\pi\big[\phi(t)-\phi(s)\big]\Big)}, \qquad s,t\in \T,
\end{equation}
which are well defined on $\Diff^1(\T)/\SL(2, \R)$ (see Section \ref{sect_meas_construct}).
More precisely, for a parameter $\alpha>0$ and a constant $K>0$, we consider $\phi \in \Diff^1(\T)/\SL(2, \R)$ for which
\begin{equation}\label{eqHolderSL_intro}
\left|\obs{\phi}{s}{t}
- \frac{\pi}{\sin \big(\pi [t-s]\big)}\right|\leq  K \, \dist{s}{t}^{\alpha-1},
\qquad \text{for all }s, t\in \T,
\end{equation}
where $\dist{\cdot}{\cdot}$ is the distance on the circle $\T$.
We demonstrate that the condition \eqref{eqHolderSL_intro} is equivalent, if we fix the gauge, to $\log \phi'$ being H\"{o}lder with parameter $\alpha$, (Theorem \ref{thrHolderEquiv}).
Finally, we show that the set of $\phi$ which do not satisfy \eqref{eqHolderSL_intro}, has exponentially small $\FMeas{\sigma^2}$ measure (Theorem \ref{thrHolderClassExpTail}).

\subsection{Main results}

\subsubsection{Large deviations principle}
The large deviations principle is usually studied in the context of probability measures.
In this work, however, we are applying it to measures $\FMeas{\sigma^2}$ which  are merely finite. 
The partition function (i.e., total mass of $\FMeas{\sigma^2}$) is equal to
\begin{equation}\label{eq_intro_part_func}
\PartF(\sigma^2) =  
    \left(\frac{2\pi}{\sigma^2}\right)^{3/2} \exp\left(\frac{2\pi^2}{\sigma^2}\right)
    =\int_0^{\infty} e^{-{\sigma^2k^2}/{2}} \sinh(2\pi k) \, 2 k \d k,
\end{equation}
as shown in \citep*{BLW}. 
We, therefore, note that, even though the large deviations principle is usually formulated only for normalised measures, it is also applicable to nonnormalised finite measures.
Therefore, throughout the paper we apply definitions and classical results from the theory to $\FMeas{\sigma^2}$.
Alternatively, one can also consider normalised measures $\PartF(\sigma^2)^{-1}\d\FMeas{\sigma^2}$ and apply the usual large deviations principle to them.
In our setting these formulations are equivalent, since by \eqref{eq_intro_part_func}
\begin{equation}
\lim_{\sigma\to 0}\sigma^2 \log \PartF(\sigma^2) = 2\pi^2,
\end{equation}
which merely causes a shift of the good rate function by $2\pi^2$.
For convenience, however, we state and prove all our results for the unnormalised measure $\FMeas{\sigma^2}$.

We will be working with the topological space $\Diff^1(\T)/\SL(2, \R)$.
Its topology is defined as the quotient topology induced by $C^1$ topology on $\Diff^1(\T)$.
Recall that in \citep*{BLW} it was showed that in this setting $\FMeas{\sigma^2}$ is a Borel measure.
\medskip

First, we define a space $\LogSob(\T)\subset\Diff^1(\T)$, on which the good rate function (or rather its lift along the quotient map $\Diff^1(\T) \twoheadrightarrow \Diff^1(\T)/\SL(2,\R)$) is finite.
\begin{defn}
Denote $\LogSob(\T) = \left\{\phi \in \Diff^1(\T) \, \big| \, \log \phi' \in \Sob(\T)\right\}$, where $\Sob(\T)$ is the Sobolev space on $\T$.
The metric on this space is defined as
\begin{equation}
\dist{\phi}{\psi}_{\LogSob} =\sqrt{\dist{\phi(0)}{\psi(0)}^2+ \left\|\log \phi' - \log \psi'\right\|_{\Sob}^2}.
\end{equation}
\end{defn}
Note that $\LogSob(\T)$ is not a linear space. 
The space $\LogSob(\T)$ is defined so that the good rate function is continuous with respect to its metric.
\begin{rmrk}
This definition is natural, since from the construction in \citep*{BLW}, we expect that if $\phi \sim \d\FMeas{\sigma^2}$, then $\log\phi'$ behaves similar to Brownian motion on small scales.
\end{rmrk}
\begin{rmrk}
As a metric space $\LogSob(\T)$ is isomorphic to $\T \times \widetilde{\Sob}(\T)$, where $\widetilde{\Sob}(\T)$ is a closed subset of $\Sob(\T)$ given by $\widetilde{\Sob}(\T) = \left\{f \in \Sob(\T) \, \big| \, \int_{\T} e^{f(t)}\d t=1\right\}$. 
This isomorphism is given by
\begin{align}
\LogSob(\T) & \to\T \times \widetilde{\Sob}(\T)\\
\phi & \mapsto \big(\phi(0),\, \log \phi'\big).
\end{align}
In particular, $\LogSob(\T)$ is a complete metric space.
It is also easy to check that $\LogSob(\T)$ is invariant under the $\SL(2, \R)$ action (although its metric is not).
\end{rmrk}

Now, we define a function, which turns out to be the lift of the good rate function along the quotient map $\Diff^1(\T) \twoheadrightarrow \Diff^1(\T)/\SL(2,\R)$.
\begin{defn}
We define the \emph{action} by 
\begin{equation}\label{eq_def_action}
\I(\phi) = 
\begin{cases} 
\dfrac{1}{2}\displaystyle\int_{\T} 
\left[\Big(\log \phi'(\tau)\Big)'^{\, 2}-4\pi^2\phi'^{\, 2}(\tau)\right] \d\tau,& \text{ for } \phi\in \LogSob(\T);\\
\infty, & \text{ otherwise}.
\end{cases}
\end{equation}
Here $\big(\log \phi'(t)\big)'$ is well defined as a weak derivative; since for $\phi\in \LogSob(\T)$, we have $\log \phi' \in \Sob(\T)$.
\end{defn}
\begin{rmrk}\label{rmrk_good_rate_for_smooth}
For any $\phi\in C^3(\T)$,
\begin{equation}
\I(\phi) = 
\dfrac{1}{2}\displaystyle\int_{\T} 
\left[\left(\dfrac{\phi''(\tau)}{\phi'(\tau)}\right)^2-4\pi^2\phi'^{\, 2}(\tau)\right] \d\tau.
\end{equation}
Moreover, since $-\Schw\big(\phi, \tau\big)$ differs from $\frac{1}{2}\left(\frac{\phi''(\tau)}{\phi'(\tau)}\right)^2$ by a total derivative $\left(\frac{\phi''(\tau)}{\phi'(\tau)}\right)'$, we have
\begin{equation}
\I(\phi) = -\displaystyle\int_{\T} 
\Big[\Schw\big(\phi, \tau\big)+2\pi^2\phi'^{\, 2}(\tau)\Big] \d\tau.
\end{equation}
\end{rmrk}
\begin{rmrk}
The action $\I(\cdot)$ is continuous on $\LogSob(\T)$ with respect to its metric $\dist{\cdot}{\cdot}_{\LogSob}$.
This follows immediately from the fact that the $\int\big(\log \phi'(\tau)\big)'^{\, 2}$ term in \eqref{eq_def_action} is precisely equal to the square of the $H^1$ norm of $\log\phi'$.
\end{rmrk}

The action $\I(\cdot)$ also defines a function on the quotient space $\Diff^1(\T)/\SL(2, \R)$ because, by the following Proposition, it is $\SL(2, \R)$ invariant.
We denote the corresponding function on the quotient space in the same way.
\begin{prp}\label{prpActionSLInvariance}
The action $\I(\cdot)$ is invariant under post-composition by elements of $\SL(2, \R)$.
In other words, for any $\psi\in \SL(2, \R)$ and any $\phi\in \LogSob(\T)$,
\begin{equation}
\I(\psi\circ \phi) = \I(\phi).
\end{equation}
\end{prp}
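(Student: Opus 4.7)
The plan is to reduce to the smooth case and then extend by density. For $\phi\in C^3(\T)$, Remark~\ref{rmrk_good_rate_for_smooth} lets me write $\I(\phi) = -\int_\T [\Schw(\phi,\tau)+2\pi^2\phi'(\tau)^2]\,d\tau$. The classical Schwarzian chain rule reads
\begin{equation*}
\Schw(\psi\circ\phi,\tau) = \Schw(\psi,\phi(\tau))\,\phi'(\tau)^2 + \Schw(\phi,\tau),
\end{equation*}
and combining with $(\psi\circ\phi)'(\tau)^2 = \psi'(\phi(\tau))^2\phi'(\tau)^2$ I obtain
\begin{equation*}
\Schw(\psi\circ\phi,\tau)+2\pi^2(\psi\circ\phi)'(\tau)^2 = \bigl[\Schw(\psi,\phi(\tau))+2\pi^2\psi'(\phi(\tau))^2\bigr]\,\phi'(\tau)^2+\Schw(\phi,\tau).
\end{equation*}

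The whole matter thus reduces to the pointwise identity
\begin{equation*}
\Schw(\psi,x)+2\pi^2\psi'(x)^2 \;=\; 2\pi^2, \qquad \forall\,\psi\in\SL(2,\R),\ \forall\,x\in\T,
\end{equation*}
which I would verify by direct computation. Writing a Möbius transformation of the disk as $z\mapsto e^{i\theta}(z-a)/(1-\bar a z)$ and setting $z=e^{2\pi i\tau}$ yields $\psi'(\tau) = (1-|a|^2)/|e^{2\pi i\tau}-a|^2$; a short algebraic manipulation using $\Schw\psi = (\log\psi')'' - \tfrac12\bigl((\log\psi')'\bigr)^2$ then produces exactly $2\pi^2$. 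Alternatively, the identity is trivial for rotations $\psi(\tau)=\tau+c$, so it suffices to verify it on a single non-trivial one-parameter subgroup together with the rotations, which generate $\SL(2,\R)$. Once the identity is in hand, the right-hand side of the previous display reduces to $2\pi^2\phi'(\tau)^2+\Schw(\phi,\tau)$, and integrating over $\T$ yields $\I(\psi\circ\phi)=\I(\phi)$ for smooth $\phi$.

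To extend to $\phi\in\LogSob(\T)$, I would combine the density of $C^3(\T)\cap\LogSob(\T)$ in $\LogSob(\T)$ (obtained by mollifying $\log\phi'\in\Sob(\T)$ and renormalising so the exponential integrates to $1$) with continuity of $\I$ on $\LogSob(\T)$ noted in the remark above. One must also check that the map $\phi\mapsto\psi\circ\phi$ is continuous from $\LogSob(\T)$ to itself; this follows from the chain-rule identity $\log(\psi\circ\phi)' = (\log\psi')\circ\phi + \log\phi'$ together with the smoothness of $\log\psi'$ on $\T$, which gives the standard Sobolev estimate for composition with a smooth function. Then the equality $\I(\psi\circ\phi_n)=\I(\phi_n)$ along a smooth approximating sequence passes to the limit.

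The main obstacle is establishing the pointwise identity $\Schw(\psi,\cdot)+2\pi^2\psi'(\cdot)^2 \equiv 2\pi^2$ on $\SL(2,\R)$; the chain rule and density arguments are routine. A slicker alternative bypasses the direct computation altogether via the $\SL(2,\R)$-invariance of the cross-ratio observables: expanding for smooth $\phi$ as $h\to 0$ gives $h\cdot\obs{\phi}{\tau}{\tau+h} = 1 + \tfrac{h^2}{6}\bigl[\tfrac12\Schw(\phi,\tau)+\pi^2\phi'(\tau)^2\bigr] + O(h^3)$, so the integrand of $-\tfrac12\I$ is already pointwise $\SL(2,\R)$-invariant, and the claim follows by integration (and density as above).
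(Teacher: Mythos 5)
Your main argument is essentially the paper's argument, reorganised. The paper's proof first rewrites the integrand as $-\Schw_{\tan(\pi\phi)}$ (absorbing the $2\pi^2\phi'^2$ term using $\Schw_{\tan(\pi\cdot)}=2\pi^2$ and the composition rule), then observes that $\tan\bigl(\pi\,\psi(\tfrac{1}{\pi}\arctan x)\bigr)$ is a M\"obius transformation of the real line for $\psi\in\SL(2,\R)$, and invokes the classical M\"obius-invariance of the Schwarzian to conclude $\Schw_{\tan(\pi\psi\circ\phi)}=\Schw_{\tan(\pi\phi)}$. You instead apply the composition rule to $\psi\circ\phi$ directly and isolate the pointwise identity $\Schw_\psi+2\pi^2\psi'^2\equiv 2\pi^2$ on $\SL(2,\R)$, which you then check by direct computation; this identity is precisely $\Schw_{\tan(\pi\psi)}=2\pi^2$, i.e.\ the same fact the paper invokes but packaged so it needs an explicit calculation rather than just citing invariance of the Schwarzian under M\"obius maps. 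Both proofs finish with the same density-and-continuity extension from $\Diff^\infty(\T)$ to $\LogSob(\T)$. The ``slicker alternative'' you sketch — recovering the integrand of $-\tfrac12\I$ as the $h^2$ coefficient in the expansion of $h\cdot\obs{\phi}{\tau}{\tau+h}$ and citing Proposition~\ref{prpObsSLInvar} — is a genuinely different route that the paper does not take; it replaces the $\SL(2,\R)$-invariance of the Schwarzian by the $\SL(2,\R)$-invariance of the cross-ratio observable, which is pleasant since the latter is already among the paper's stated primitives. (I checked the Taylor expansion: one does indeed get $h\cdot\obs{\phi}{\tau}{\tau+h}=1+\tfrac{h^2}{6}\bigl[\tfrac12\Schw_\phi(\tau)+\pi^2\phi'^2(\tau)\bigr]+O(h^3)$.)
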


We now formulate the main result of this paper.
\begin{thr}\label{thrLDP}
The family of measures $\left\{\d \FMeas{\sigma^2}\right\}_{\sigma>0}$ satisfies the large deviations principle for $\sigma\to 0$ with good rate function $\I(\cdot)$.
In other words, for any $\alpha \in \R$ the level set 
\begin{equation}
\Psi(\alpha) = \left\{\phi\in \Diff(\T)/\SL(2, \R): \I(\phi)\leq \alpha \right\}
\end{equation}
is compact, and
\begin{align}
\liminf_{\sigma\to 0} \sigma^2\, \log \FMeas{\sigma^2}(U) &\geq - \inf_{\phi\in U}\I(\phi), \quad \text{if } U\subset \Diff^1(\T)/\SL(2, \R) \text{ is open in } 
C^1 \text{ topology}, \\
\limsup_{\sigma\to 0} \sigma^2\, \log \FMeas{\sigma^2}(V) &\leq - \inf_{\phi\in V}\I(\phi), \quad \text{if } V\subset \Diff^1(\T)/\SL(2, \R) \text{ is closed in }
C^1 \text{ topology}.
\end{align}
\end{thr}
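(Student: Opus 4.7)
The plan is to deduce the LDP from Schilder's theorem for scaled Brownian motion via the stochastic representation of $\FMeas{\sigma^2}$ established in \citep*{BLW}, combined with a tilted LDP argument in the spirit of Varadhan. Informally, that construction realises $\log \phi'$ under $\FMeas{\sigma^2}$ as a Brownian-bridge-type process of scale $\sigma$ re-weighted by a Radon--Nikodym factor whose exponent carries precisely the non-Gaussian part $\frac{2\pi^2}{\sigma^2}\int_\T \phi'^2 \d\tau$ of $-\I(\phi)/\sigma^2$. The kinetic contribution $\frac{1}{2}\int_\T \bigl((\log \phi')'\bigr)^2 \d\tau$ to $\I$ is expected to emerge as the Cameron--Martin/Schilder rate for the underlying Brownian base measure, while the potential $-2\pi^2\int_\T \phi'^2 \d\tau$ is expected to enter via Varadhan's lemma because $\phi \mapsto \int_\T \phi'^2 \d\tau$ is continuous and bounded on the relevant sublevel sets.

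Concretely, I would proceed in three stages. First, to establish compactness of the level sets $\Psi(\alpha)$: from $\int_\T \phi' \d\tau = 1$ and $\I(\phi)\leq \alpha$ one obtains $\|(\log \phi')'\|_{L^2}^2 \leq 2\alpha + 4\pi^2 \int_\T \phi'^2 \d\tau$, and after choosing a gauge within each $\SL(2,\R)$ orbit, Poincar\'e's inequality gives a uniform bound on $\|\log \phi'\|_{\Sob}$. Sobolev embedding then controls $\log \phi'$ in a H\"older norm, and via the cross-ratio--H\"older equivalence (Theorem \ref{thrHolderEquiv}) together with the compact embedding theorem developed in the paper, this yields compactness of the image of $\Psi(\alpha)$ in the quotient topology of $\Diff^1(\T)/\SL(2,\R)$.

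Second, exponential tightness on the LDP scale follows directly from the H\"older-class concentration estimate (Theorem \ref{thrHolderClassExpTail}): it produces sets whose complements carry exponentially small $\FMeas{\sigma^2}$-mass and which are compact by the same embedding. Third, for local upper and lower bounds near a reference $\phi_0 \in \LogSob(\T)$, I would pass to the stochastic representation of \citep*{BLW}, apply Schilder's theorem to the $\sigma$-scaled Brownian process underlying $\log \phi'$, and use Varadhan's lemma to absorb the continuous potential into the rate, producing exactly $\I(\phi_0)$. Combined with exponential tightness and the continuity of $\I$ on $\LogSob(\T)$, this promotes the local statements to the closed-set upper bound and the open-set lower bound required by the theorem.

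The main obstacle, I expect, is the interplay between the $\SL(2,\R)$ quotient and the non-linearity of $\LogSob(\T)$. Working directly on $\Diff^1(\T)/\SL(2,\R)$, one must reconcile the $\SL(2,\R)$-invariance of $\I$ (Proposition \ref{prpActionSLInvariance}) with a gauge-dependent stochastic representation on $\Diff^1(\T)$ and verify that Schilder's local asymptotics and Varadhan's tilt descend cleanly to the quotient topology on the nose. The cross-ratio--H\"older framework built in this paper is designed precisely to bridge this gap, and I expect it to be the key technical tool in carrying out the reduction.
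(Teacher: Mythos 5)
Your overall skeleton matches the paper's: the weak LDP is obtained by pushing Schilder's theorem for the Brownian bridge through the stochastic representation of $\FMeas{\sigma^2}$ and tilting by $\exp\{2\pi^2\sigma^{-2}\int\phi'^2\}$ (the paper uses \cite[Lemmas~4.3.4, 4.3.6]{bookLDP} for exactly this), and exponential tightness comes from Theorem~\ref{thrHolderClassExpTail} together with Corollary~\ref{crlHolderClassIsCompact}. However, your Stage~1 contains a genuine gap. You propose to prove compactness of the level sets $\Psi(\alpha)$ directly from
\begin{equation}
\big\|(\log\phi')'\big\|_{L^2}^2 \;\leq\; 2\alpha + 4\pi^2\int_\T \phi'^2\,\d\tau,
\end{equation}
followed by Poincar\'e and Sobolev embedding; but this bound does not close. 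After gauge fixing and using $\int_\T\phi'\,\d\tau=1$, one has $\int_\T\phi'^2\,\d\tau\le e^{\mathrm{osc}(\log\phi')}$ and the one-dimensional embedding $\mathrm{osc}(\log\phi')\le C\|(\log\phi')'\|_{L^2}$, so writing $r=\|(\log\phi')'\|_{L^2}$ your inequality only gives $r^2\le 2\alpha+4\pi^2 e^{Cr}$, which is vacuous for large $r$. In fact the coercivity of $\I$ on $\LogSob$ (modulo $\SL(2,\R)$) is a non-trivial fact that exploits the precise balance between the kinetic and potential terms (compare with the Sturm--Liouville argument used in Theorem~\ref{thrActionMin} to get $\I\ge -2\pi^2$), so a soft Sobolev argument of this form cannot succeed. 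The paper never proves goodness of the rate function directly: it appears automatically once one has the \emph{weak} LDP together with exponential tightness, via \cite[Lemma~1.2.18]{bookLDP}.

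A second, smaller imprecision is in your Varadhan step. The tilting functional $\phi\mapsto 2\pi^2\int_\T\phi'^2\,\d\tau$ is not bounded above on $\Diff^1(\T)$ (nor on closed $V$), so Varadhan's lemma in its basic form does not apply; one would need a uniform exponential moment condition that is not established in your sketch. The paper instead restricts the upper bound to compact $V\subset\DiffG^1(\T)$ (and a compact $A\subset\SL(2,\R)$ of Haar measure~$1$), so that the pulled-back tilting functional $F$ is upper semicontinuous and bounded above on the compact set $\A^{-1}(V\times A)$, and then invokes the one-sided tilted-LDP bound \cite[Lemma~4.3.6]{bookLDP}. This is exactly why the argument is phrased as a \emph{weak} LDP (compact upper bound) rather than a full one, and the promotion to closed sets is again handled by exponential tightness. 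If you restructure Stage~1 as ``follows from Stages~2 and 3 plus \cite[Lemma~1.2.18]{bookLDP}'' and restrict the Varadhan tilt to compact sets, your plan becomes essentially the paper's proof.
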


Moreover, we find the minimum of the good rate function $\I(\cdot)$ and find where it is achieved.
\begin{thr}\label{thrActionMin}
For any $\phi \in \LogSob(\T)$, 
\begin{equation}\label{eqLemmaActionBoundResult}
\I(\phi)=\frac{1}{2}
\int_{0}^{1} 
\left[\left(\frac{\phi''(\tau)}{\phi'(\tau)}\right)^2-4\pi^2\phi'^{\, 2}(\tau)\right] \d\tau
\geq -2\pi^2.
\end{equation}
Moreover, the equality holds if and only if $\phi$ is a M\"{o}bius transformation of the unit circle.
\end{thr}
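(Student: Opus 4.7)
The plan is to reduce the statement to a one-dimensional variational problem. Substituting $f = \log \phi' \in \Sob(\T)$, the condition $\phi \in \Diff^1(\T)$ becomes the constraint $\int_\T e^f \d\tau = 1$ (since $\int_\T \phi' \d\tau = \phi(1)-\phi(0) = 1$), and the action reads
$$\I(\phi) = \frac{1}{2}\int_\T (f')^2 \d\tau - 2\pi^2 \int_\T e^{2f} \d\tau.$$
Hence the inequality $\I(\phi) \geq -2\pi^2$ is equivalent to
$$\int_\T (f'(\tau))^2 \d\tau \geq 4\pi^2\left(\int_\T e^{2f(\tau)} \d\tau - 1\right) \qquad \text{subject to } \int_\T e^f \d\tau = 1,$$
with equality to be achieved exactly when $\phi'$ is a Poisson kernel.

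I would first verify the equality at M\"obius by direct Fourier calculation. For a M\"obius transformation with parameter $a = re^{i\alpha}\in\D$, one has $\phi'(\tau) = (1-r^2)/|1 - re^{2\pi i\tau - i\alpha}|^2$, with Fourier series $\phi'(\tau) = \sum_n r^{|n|}e^{-in\alpha} e^{2\pi in\tau}$. Parseval gives $\int(\phi')^2 \d\tau = 1 + 2\sum_{n\geq 1}r^{2n} = (1+r^2)/(1-r^2)$, while expanding $\log|1-re^{i\theta}|^2 = -2\sum_{n\geq 1}r^n\cos(n\theta)/n$ and differentiating yields $\int (f')^2 \d\tau = 8\pi^2 r^2/(1-r^2)$. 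Substituting gives $\I(\phi) = -2\pi^2$ independent of $(r,\alpha)$, consistent with Proposition \ref{prpActionSLInvariance}.

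For the lower bound I would use a direct variational argument. By Theorem \ref{thrLDP} the level sets of $\I$ are compact in $\Diff^1(\T)/\SL(2, \R)$, so $\I$ is lower semicontinuous and attains its infimum at some $\phi_*$. Any minimizer satisfies the Euler--Lagrange equation
$$f'' = -4\pi^2 e^{2f} + \mu e^f$$
with Lagrange multiplier $\mu \in \R$ for the constraint $\int e^f = 1$. The ODE is Hamiltonian with conserved energy $E = \frac{1}{2}(f')^2 + 2\pi^2 e^{2f} - \mu e^f$; setting $y = e^f$ yields $(y')^2 = y^2(-4\pi^2 y^2 + 2\mu y + 2E)$. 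Direct substitution shows the M\"obius family satisfies this ODE with $E = -2\pi^2$ for every $r\in[0,1)$, an isochrony property: the period of the oscillator equals $1$ throughout this entire family. One then shows conversely that any period-$1$ solution on $\T$ with $\int y\, \d\tau = 1$ must have $E = -2\pi^2$ and belongs to the M\"obius family parameterized by $\mu \in [4\pi^2,\infty)$, so the minimizer is M\"obius and $\I(\phi_*) = -2\pi^2$.

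The main obstacle is the last step: characterising all period-$1$ critical points as M\"obius. This amounts to analysing the period map of the nonlinear oscillator $f''=-4\pi^2 e^{2f}+\mu e^f$ and exploiting the isochrony of the M\"obius family in an essential way. An alternative route would be to compute the Hessian of $\I$ at a M\"obius point, verify that it is positive semidefinite with null space matching exactly the tangent space of the $\SL(2,\R)$-orbit, and combine this with compactness of level sets to conclude that the critical set reduces to a single orbit.
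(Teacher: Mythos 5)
Your proposal takes a genuinely different route from the paper, and it has a real gap at the decisive step.

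The paper does not try to classify critical points of the Euler--Lagrange equation. Instead it converts the integrand into a Schwarzian derivative, writes $-\frac{1}{2}\big(\Schw\{\phi,\tau\} + 2\pi^2\phi'^2(\tau)\big) = -\frac{1}{2}\Schw\{f_a,\tau\}$ for a family of gauges $f_a(t)=\cot\big(\pi[\phi(t+a)-\phi(a)]\big)$, observes that $v_a=(f_a')^{-1/2}$ is the ground state of the Dirichlet Sturm--Liouville problem $-F''+q(\cdot+a)F=\lambda F$ on $[0,1]$ at eigenvalue zero, extracts from the Rayleigh principle the pointwise-in-$a$ inequality
\begin{equation}
\frac{\pi^2}{2}+\int_0^1 q(t+a)\sin^2(\pi t)\,\d t \ge 0,
\end{equation}
and then \emph{averages over} $a\in[0,1)$ to kill the $\sin^2$ weight. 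This yields $\int_0^1 q \ge -\pi^2$ for \emph{every} $\phi\in\Diff^\infty(\T)$ directly, with the equality case read off from when $\sin(\pi t)$ is the ground state for a.e.\ $a$. No compactness, no Euler--Lagrange, no classification of an ODE family.

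Your proposal instead relies on (i) existence of a minimizer via compact level sets (borrowed from Theorem~\ref{thrLDP}, which is logically available since its proof in Sections~\ref{sectWeakLDP}--\ref{sectExpEstimates} does not invoke Theorem~\ref{thrActionMin}, though the paper deliberately chooses to argue independently of it), and (ii) the claim that every period-$1$ solution of $f'' = -4\pi^2 e^{2f} + \mu e^f$ with $\int e^f = 1$ lies on the M\"obius curve $E=-2\pi^2$. Step (ii) is the entire content and is not proved: the period map $(E,\mu)\mapsto\big(T(E,\mu),\int e^f\big)$ degenerates on the M\"obius branch precisely because of isochrony, so one cannot conclude injectivity by a routine Jacobian argument, and you give no argument ruling out nearby period-$1$ orbits with $E\ne -2\pi^2$. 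The suggested fallback (positive-semidefinite Hessian at M\"obius plus compact level sets) is also insufficient: local minimality at one critical orbit together with properness of $\I$ does not exclude the existence of other critical points, let alone other global minimizers; one would need a global Morse-type or mountain-pass argument, which you do not sketch. The Fourier check that M\"obius gives $\I=-2\pi^2$ is correct and matches the paper's conclusion, but that is the easy half. As it stands the lower bound is not established.
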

\begin{rmrk}
The inequality above follows from the formula of partition function and the large deviations principle. 
However, in this paper we give a more direct prove.
\end{rmrk}
Recall that the fact that $\I(\cdot)$ is a good rate function implies that it is lower semicontinuous.
Therefore, combining these two Theorems above we immediately deduce that the measure $\FMeas{\sigma^2}$ concentrates around M\"{o}bius transformations for small $\sigma$.
\begin{crl}
The probability measures $\Big\{\PartFunc{\sigma^2}^{-1}\d\FMeas{\sigma^2}\Big\}_{\sigma>0}$ converge weakly to the delta measure on the conjugacy class of the identity map as $\sigma\to 0$.
\end{crl}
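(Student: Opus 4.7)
The plan is to combine the large deviations principle of Theorem \ref{thrLDP}, the characterisation of minimisers in Theorem \ref{thrActionMin}, and the partition function asymptotics \eqref{eq_intro_part_func}. By the Portmanteau theorem, weak convergence of a sequence of probability measures on a topological space to a Dirac mass at a point $\phi_0$ is equivalent to $\mu_\sigma(V)\to 0$ for every closed set $V\subset\Diff^1(\T)/\SL(2,\R)$ not containing $\phi_0$. So I would fix such a closed $V$ (with $\phi_0$ the conjugacy class of the identity) and show that its normalised mass tends to zero exponentially fast.

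The first step is to show that $\inf_{\phi\in V}\I(\phi) > -2\pi^2$. By Theorem \ref{thrActionMin}, the identity class is the unique minimiser of $\I(\cdot)$ on $\Diff^1(\T)/\SL(2,\R)$, attaining value $-2\pi^2$. Suppose for contradiction that $\inf_V \I = -2\pi^2$; pick a sequence $\phi_n\in V$ with $\I(\phi_n)\to -2\pi^2$. Eventually the sequence lies in the level set $\Psi(-2\pi^2+1)$, which is compact by Theorem \ref{thrLDP}. Passing to a convergent subsequence $\phi_{n_k}\to \phi^*$, and using that $V$ is closed together with the lower semicontinuity of the good rate function $\I(\cdot)$, we get $\phi^*\in V$ and $\I(\phi^*)\leq -2\pi^2$. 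By Theorem \ref{thrActionMin} this forces $\phi^*$ to be the identity class, contradicting $\phi_0\notin V$.

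The second step is to apply the LDP upper bound of Theorem \ref{thrLDP} to obtain
\begin{equation*}
\limsup_{\sigma\to 0} \sigma^2\log \FMeas{\sigma^2}(V) \leq -\inf_{\phi\in V}\I(\phi) < 2\pi^2.
\end{equation*}
Combined with $\lim_{\sigma\to 0}\sigma^2\log\PartFunc{\sigma^2} = 2\pi^2$, which follows directly from \eqref{eq_intro_part_func}, this gives
\begin{equation*}
\limsup_{\sigma\to 0} \sigma^2\log \Bigl[\PartFunc{\sigma^2}^{-1}\FMeas{\sigma^2}(V)\Bigr] < 0,
\end{equation*}
and hence $\PartFunc{\sigma^2}^{-1}\FMeas{\sigma^2}(V)\to 0$ at an exponential rate. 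This is exactly the closed-set half of the Portmanteau criterion, so the normalised measures converge weakly to the delta mass at the identity class.

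The whole argument is short: the only step requiring care is the compactness/lower semicontinuity argument that lifts the pointwise strict inequality $\I(\phi) > -2\pi^2$ for $\phi\neq \phi_0$ to the uniform strict inequality $\inf_V \I > -2\pi^2$. This is the standard way a good rate function with a unique minimiser forces exponential concentration, and the real input is not in this corollary but in the previous two theorems.
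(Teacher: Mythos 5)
Your proof is correct and is precisely the standard argument the paper is implicitly invoking when it says the corollary "immediately" follows from Theorems \ref{thrLDP} and \ref{thrActionMin} together with the lower semicontinuity of the good rate function. The key step you spell out — lifting the pointwise strict inequality to $\inf_V \I > -2\pi^2$ via compactness of level sets and lower semicontinuity, then feeding it into the LDP upper bound normalised by $\PartFunc{\sigma^2}$ — is exactly what is meant.
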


\subsubsection{H\"{o}lder condition on $\Diff^1(\T)/\SL(2, \R)$}
It is well known that H\"{o}lder spaces play an important role in the analysis of stochastic processes. 
Here we give a description of an analogue of H\"{o}lder property for $\Diff^1(\T)/\SL(2, \R)$. 
First, we define it in terms of cross-ratio observables on $\Diff^1(\T)/\SL(2,\R)$, given by \eqref{eqDefObs_intro},
which depend only on the conjugacy class of $\phi$; see Section \ref{sect_meas_construct}.
Then we relate our definition to the classical H\"{o}lder condition for continuous functions applied to $\log \phi'$ after fixing the gauge.

\begin{defn}
For $\alpha>0$, denote by $\MyHol{\alpha}{K}$ the set of those $\phi \in \Diff^1(\T)/\SL(2, \R)$ for which
\begin{equation}\label{eqHolderSL}
\left|\obs{\phi}{s}{t}
- \frac{\pi}{\sin \big(\pi [t-s]\big)}\right|\leq  K \, \dist{s}{t}^{\alpha-1}
\end{equation}
for all $s, t\in \T$.
\end{defn}

\begin{rmrk}
The classical H\"{o}lder condition is not $\SL(2, \R)$ invariant and depends on the representative.
Condition \eqref{eqHolderSL}, on the other hand, is written in terms of well-defined observables on $\Diff^1(\T)/\SL(2, \R)$.
\end{rmrk}
\begin{rmrk}
One can also define a more local version of the H\"{o}lder condition. 
That is, we can consider $\phi$ such that \eqref{eqHolderSL} holds for  $s, t\in \T$ which are sufficiently close, say $\dist{s}{t}<\eps$ for some $\eps >0$. 
However, it follows from Theorem \ref{thrHolderEquiv} that if such local condition holds for some $\phi$, then $\phi \in \MyHol{\alpha}{K'}$ for some $K' = K'(K, \eps)$. 
\end{rmrk}

Denote 
\begin{equation}\label{eq_gauge_fix_def}
\DiffG^1(\T) = \left\{\phi\in \Diff^1(\T): \phi(0)=0,\, \phi\left(\frac{1}{3}\right)=\frac{1}{3},\, \phi\left(\frac{2}{3}\right)=\frac{2}{3}\right\},
\end{equation}
with the topology induced by $\Diff^1(\T)$. 
Notice that as a topological space it is isomorphic to $\Diff^1(\T)/\SL(2,\R)$.

The following Theorem provides a connection between our definition of $\MyHol{\alpha}{\cdot}$ and the classical notion of H\"{o}lder condition for $\log \phi'$.
Informally, we show that these definitions agree if we identify $\Diff^1(\T)/\SL(2, \R)$ with $\DiffG ^1(\T)$.
\begin{thr}\label{thrHolderEquiv}
Fix $\alpha \in (0,1)$. For any $C>0$, there exists $C'>0$ such that if $\phi \in \Diff^1(\T)$ and $\forall s, t \in \T: |\log \phi'(t)- \log \phi'(s)|<C \, \dist{s}{t}^{\alpha}$, then $\phi\in \MyHol{\alpha}{C'}$.
Furthermore, $C'$ can be chosen so that $C'\to 0$ if $C\to 0$.

A converse is also true. Fix $\eps >0$. Then for any $C'>0$ there exists $C''>0$ such that if $\phi\in \DiffG^1(\T)$ and
\begin{equation}\label{eqHolderSLThrConverse}
\left|\obs{\phi}{s}{t}
- \frac{\pi}{\sin \big(\pi [t-s]\big)}\right|\leq  C' \, \dist{s}{t}^{\alpha-1}
\end{equation}
for all $s, t\in \T$ with $\dist{s}{t}<\eps$,
then
$\forall s, t \in \T$ we have $|\log \phi'(t)-\log \phi'(s)|<C'' \dist{s}{t}^{\alpha}$.
Moreover, if $\eps >1$ (i.e., \eqref{eqHolderSLThrConverse} holds for all $s,t\in \T$), then $C''$ can be chosen so that $C''\to 0$ if $C'\to 0$.
\end{thr}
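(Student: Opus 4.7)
I parametrise $\phi$ by $f = \log\phi'$ and translate both directions of the theorem into regularity statements for $f$. Throughout let $h = \dist{s}{t}$.

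\emph{Forward direction.} Suppose $|f(s)-f(t)| \le C\,h^{\alpha}$. The normalisation $\int_{\T}e^{f}\d t = 1$ together with this H\"{o}lder bound forces $\|f\|_{\infty} \le C\cdot 2^{-\alpha}$. On an arc of length $h$, writing $\bar f$ for the mean of $f$ on $[s,t]$, the H\"{o}lder assumption and the Taylor expansion of the exponential give
\begin{equation*}
\phi(t)-\phi(s) \,=\, h\,e^{\bar f}\bigl(1+O(C^{2}h^{2\alpha})\bigr), \qquad \sqrt{\phi'(s)\phi'(t)} \,=\, e^{\bar f}\bigl(1+O(Ch^{\alpha})\bigr).
\end{equation*}
Substituting into $\obs{\phi}{s}{t}$ and Taylor-expanding $\sin$, the singular $1/h$ terms from both sides cancel, leaving $|\obs{\phi}{s}{t} - \pi/\sin(\pi[t-s])| = O(C h^{\alpha-1})$ for small $h$. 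For $h$ bounded below, both quantities are explicitly bounded by functions of $\|f\|_{\infty} = O(C)$, so the $O(C)$ discrepancy is absorbed into $C h^{\alpha-1}$ (using $h\le 1/2$). All constants scale linearly in $C$, giving the quantitative statement $C' \to 0$ as $C \to 0$.

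\emph{Converse direction.} The plan has two stages: first establish an a priori bound $\|\phi-\operatorname{id}\|_{\infty}+\|\phi'-1\|_{\infty} \le M$, then locally invert the forward-direction Taylor expansion to extract H\"{o}lder regularity of $f$. Once the a priori bound is in place, writing $\phi(t)=t+\rho(t)$ and $\phi'(t)=1+\eta(t)$ with $\rho,\eta = O(M)$, the hypothesis \eqref{eqHolderSLThrConverse} combined with the Taylor expansions reduces on short arcs to a perturbative statement which, after a careful bootstrap, yields $|f(s)-f(t)| = O(C'\,h^{\alpha})$ for $h < \eps$. Distances beyond $\eps$ are handled by chaining at most $O(1/\eps)$ intermediate pairs.

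The a priori bound is the main technical step, and this is where the gauge $\phi(0)=0$, $\phi(1/3)=1/3$, $\phi(2/3)=2/3$ enters decisively. Assuming first $\eps \ge 1/3$, evaluating \eqref{eqHolderSLThrConverse} at the three pairs drawn from $\{0,1/3,2/3\}$ makes both arguments of the sines explicit and reduces to quantitative bounds on $\phi'(i/3)\phi'(j/3)$, hence on each $\phi'(i/3)$. For general $t$, the hypothesis at $s\in\{0,1/3\}$ produces a $2\times 2$ nonlinear system for $(\phi(t),\phi'(t))$ which is a perturbation of the identity by $O(C')$; a quantitative inverse function argument then provides uniform bounds on $|\phi(t)-t|$ and $|\phi'(t)-1|$. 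If $\eps < 1/3$, the same scheme runs after chaining over $O(1/\eps)$ intermediate pairs, at the cost of a multiplicative constant depending only on $\eps$. In the global regime $\eps > 1$ all constants scale linearly with $C'$, yielding $C'' \to 0$ as $C' \to 0$; for $\eps \le 1$ the chaining introduces constant multiplicative losses, matching precisely the stated weaker quantitative conclusion.
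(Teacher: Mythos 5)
Your forward argument is essentially the same as the paper's. You use the normalisation (in your language $\int_{\T}e^{f}\d t = 1$; equivalently $\int_{\T}\phi'=1$) to turn the H\"older bound on $f=\log\phi'$ into a uniform two-sided bound on $\phi'$, and then Taylor-expand $\phi(t)-\phi(s)$, $\sqrt{\phi'(s)\phi'(t)}$ and $\sin$ to show the singular $1/h$ pieces cancel, leaving an $O(Ch^{\alpha-1})$ remainder. This matches the chain of estimates \eqref{eqThrHolderEquivDirectPhiRatioEst}--\eqref{eqThrHolderEquivDirectPhiProdEst} in the paper, and your quantitative claim that all constants scale linearly in $C$ is correct.

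\paragraph{Converse direction: there is a gap.}
The plan ``establish an a priori bound, then locally invert the forward Taylor expansion to extract H\"older regularity of $f$'' cannot be completed as stated, because the forward Taylor expansion is not locally invertible in the quantity you want. Expanding the cross-ratio at $t=s+h$ for smooth $\phi$ one finds
\begin{equation*}
\obs{\phi}{s}{t}-\frac{\pi}{\sin\big(\pi[t-s]\big)} \;=\; \frac{h}{12}\Big(\Schw_{\phi}(s)+2\pi^{2}\phi'(s)^{2}-2\pi^{2}\Big)+O(h^{2}),
\end{equation*}
with the $O(1)$ and $O(1/h)\cdot O(h)$ contributions cancelling \emph{identically}: the hypothesis of \eqref{eqHolderSLThrConverse}, evaluated at a single pair $(s,t)$ in the limit $t\to s$, carries no first-order information about $\phi''(s)$, hence none about the local oscillation of $f=\log\phi'$. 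Concretely, after the a priori bound and Taylor expansion the hypothesis reduces to a constraint of the shape
\begin{equation*}
\left|\frac{\sqrt{\phi'(s)\phi'(t)}}{\tfrac{1}{h}\int_{s}^{t}\phi'}-1\right| \;\lesssim\; C'h^{\alpha},
\end{equation*}
i.e.\ it compares the geometric mean $\sqrt{\phi'(s)\phi'(t)}$ to the \emph{integral average} of $\phi'$ over $[s,t]$; this is satisfied by profiles with arbitrarily large jump $|f(t)-f(s)|$, so pointwise ``inversion'' of a single such relation cannot produce the bound $|f(t)-f(s)|\lesssim C'h^{\alpha}$.

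The mechanism that rescues the conclusion is genuinely nonlocal: one must combine the hypothesis over a whole range of base points. In the paper this is done (after normalising via the gauge $\Psi_s$) by writing the hypothesis in Riccati form $\bigl|F_s'(\tau)/F_s^{2}(\tau)-\pi/\sin^{2}(\pi\tau)\bigr|\le C'\tau^{\alpha-2}$ for $F_s(\tau)=\tan\big(\pi\,\Psi_s\!\circ\!\phi(s+\tau)\big)$ and \emph{integrating} from $\tau=x$ up to the fixed endpoint (where $F_s$ is pinned by the gauge). This first produces a bound on $\phi$ itself at precision $O(x^{\alpha+1})$, and only then, plugging back into the original observable constraint, yields $\big|(\Psi_s\!\circ\!\phi)'(s+\tau)-1\big|\lesssim \tau^{\alpha}$. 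Your ``careful bootstrap'' does not identify this two-stage integrate-then-substitute step, and without it or an equivalent device the proposal does not close. The a priori-bound stage of your plan (using the gauge pins at $0,1/3,2/3$ and chaining over $O(1/\eps)$ arcs) corresponds roughly to Steps $1$--$3$ of the paper's Case $2$ and looks reasonable, but the local extraction step that must follow is the heart of the matter and is the part that is missing.
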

\begin{rmrk}
We can consider alternative ways to fix the gauge (i.e., choose representatives in each conjugacy class).
For example, one can fix three values of $\phi$ and/or $\phi'$ in a way which differs from \eqref{eq_gauge_fix_def}. 
This will result in a new space $\DiffGG \, ^1(\T)$ (although it will still be topologically isomorphic to $\Diff^1(\T)/\SL(2, \R)$). 
It is not hard to see that Theorem \ref{thrHolderEquiv} will still hold with the new $\DiffGG\, ^1(\T)$. 
Indeed, it is not hard to check that the usual H\"{o}lder conditions on these spaces are equivalent up to a change of the constant: for any $C>0$, there exists $C'>0$ such that if $\phi_0\in \DiffG ^1(\T)$ and $\phi_1 \in \DiffGG \, ^1(\T)$ are representatives of the same $\SL(2, \R)$ conjugacy class, then $|\log \phi_0'(t)-\log \phi_0'(s)|<C \dist{s}{t}^{\alpha}$ implies $|\log \phi_1'(t)-\log \phi_1'(s)|<C' \dist{s}{t}^{\alpha}$, and $|\log \phi_1'(t)-\log \phi_1'(s)|<C \dist{s}{t}^{\alpha}$ implies $|\log \phi_0'(t)-\log \phi_0'(s)|<C' \dist{s}{t}^{\alpha}$.
\end{rmrk}

\begin{rmrk}
The main difficulty in Theorem \ref{thrHolderEquiv} is in the converse statement. 
Notice that $\log \phi'$ being H\"{o}lder is a local condition.
However, it follows from \eqref{eqHolderSL} \emph{only} if we impose a global condition by fixing the gauge.
Without imposing such a global condition, the result fails because we can make the constant in H\"{o}lder condition for $\log \phi'$ arbitrarily large by acting with $\SL(2, \R)$.
Therefore, one cannot deduce that $\log \phi'$ is H\"{o}lder from \eqref{eqHolderSLThrConverse} by a purely local argument.
\end{rmrk}
\begin{rmrk}
The condition that $\log \phi'$ is H\"{o}lder under gauge fixing has also been studied in the context of the Weil--Petersson class of diffeomorphisms in \citep{Yilin_Holder}.
\end{rmrk}

Combining the theorem above with the Stone–Weierstrass Theorem we immediately obtain a compact embedding theorem as a corollary.
\begin{crl}\label{crlHolderClassIsCompact}
For any $\alpha \in (0,1)$ and $K>0$, the set $\MyHol{\alpha}{K}$
is compact in the topology of $\Diff^1(\T)/\SL(2, \R)$.
\end{crl}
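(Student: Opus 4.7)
The plan is to transfer the compactness question from the quotient space to the concrete space $\DiffG^1(\T)$ of gauge-fixed representatives (which is topologically isomorphic to $\Diff^1(\T)/\SL(2,\R)$), and then apply Arzelà--Ascoli to the family of logarithmic derivatives. The key input is the converse direction of Theorem \ref{thrHolderEquiv}, which turns the cross-ratio bound \eqref{eqHolderSL} into a genuine Hölder estimate on $\log\phi'$ whose constant depends only on $K$ (and $\alpha$), uniformly across $\phi\in \MyHol{\alpha}{K}$.

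Concretely, I would proceed as follows. Given a sequence $\phi_n\in \MyHol{\alpha}{K}$, select the canonical representatives $\tilde{\phi}_n\in \DiffG^1(\T)$. Applying the converse part of Theorem \ref{thrHolderEquiv} with any $\eps>1$ yields a constant $C''=C''(K,\alpha)$ such that
\begin{equation}
|\log \tilde{\phi}'_n(t)-\log \tilde{\phi}'_n(s)| \le C''\,\dist{s}{t}^{\alpha}
\quad\text{for all } s,t\in\T \text{ and all } n.
\end{equation}
Since $\int_\T \tilde{\phi}'_n(t)\,\d t=1$, some $t_n\in\T$ satisfies $\tilde{\phi}'_n(t_n)=1$, so combined with the Hölder bound one obtains a uniform sup-norm bound $\|\log \tilde{\phi}'_n\|_\infty \le C''$. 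The family $\{\log \tilde{\phi}'_n\}$ is therefore equicontinuous and uniformly bounded, so Arzelà--Ascoli provides a subsequence $\log \tilde{\phi}'_{n_k}\to h$ uniformly, with $h\in C(\T)$ satisfying the same Hölder estimate.

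Setting $\phi(t):=\int_0^t e^{h(s)}\,\d s$, the uniform convergence $\tilde{\phi}'_{n_k}\to e^h$ upgrades to $C^1$-convergence $\tilde{\phi}_{n_k}\to \phi$. The three gauge-fixing conditions defining $\DiffG^1(\T)$ pass to the limit by continuity, so $\phi\in \DiffG^1(\T)$. Since the cross-ratio observable $\obs{\phi}{s}{t}$ is a continuous function of the $C^1$-jet of $\phi$, the condition \eqref{eqHolderSL} is closed, hence $\phi\in \MyHol{\alpha}{K}$. This exhibits a convergent subsequence in the quotient topology and proves compactness.

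The bulk of the work is already absorbed into Theorem \ref{thrHolderEquiv}; what remains is mostly verification. The main point to be careful about is the \emph{uniformity} of $C''$ over the whole set $\MyHol{\alpha}{K}$, which is precisely what the statement of the converse direction of Theorem \ref{thrHolderEquiv} provides (since $C''$ is a function of the cross-ratio constant $C'$ alone). Beyond that, one must confirm that the identification $\Diff^1(\T)/\SL(2,\R)\cong \DiffG^1(\T)$ genuinely carries the quotient topology to the $C^1$ topology on representatives, so that $\tilde{\phi}_{n_k}\to \phi$ in $C^1$ translates to $\phi_{n_k}\to \phi$ in the quotient. Both verifications are routine.
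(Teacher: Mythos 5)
Your proposal is correct and follows essentially the same route the paper has in mind: apply the converse direction of Theorem~\ref{thrHolderEquiv} to obtain a uniform H\"{o}lder bound on $\log\tilde\phi'_n$ over the gauge-fixed representatives, then conclude by compactness of H\"{o}lder balls (Arzel\`a--Ascoli) together with closedness of the constraint~\eqref{eqHolderSL} under $C^1$-limits. The paper attributes the second step to the ``Stone--Weierstrass theorem,'' which is evidently a slip for Arzel\`a--Ascoli --- the result you correctly invoke.
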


We also show that the measures $\FMeas{\sigma^2}$ are supported on  $\MyHol{\alpha}{\cdot}$ up to an exponentially small error.
Recall that $\PartF(\sigma^2)$ is the total mass of $\FMeas{\sigma^2}$.

\begin{thr}\label{thrHolderClassExpTail}
For any $\alpha\in [1/4, 1/2)$ and any $\Lambda, N>0$ there exists $M>0$ such that for any $\sigma\in (0, \Lambda)$,
\begin{equation}\label{eqExpBoundAllPoints}
\FMeas{\sigma^2} \Big(
\MyHol{\alpha}{M}
\Big)
\\
\geq \PartF(\sigma^2)- \exp\left(-\frac{N}{\sigma^2}\right).
\end{equation}
\end{thr}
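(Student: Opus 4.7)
The plan is to reduce to a classical H\"{o}lder estimate via Theorem \ref{thrHolderEquiv}, and then apply a dyadic chaining argument using Gaussian-type tail bounds on the increments of $\log \phi'$ that come from the construction of $\FMeas{\sigma^2}$ in \citep*{BLW}. The first paragraph sets up the reduction; the next two carry out the chaining.

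By Theorem \ref{thrHolderEquiv}, there is a function $M = M(M_0)$ such that whenever $\phi \in \DiffG^1(\T)$ satisfies $|\log\phi'(t) - \log\phi'(s)| \leq M_0 \, \dist{s}{t}^{\alpha}$ for all $s,t\in\T$, the associated conjugacy class lies in $\MyHol{\alpha}{M}$. Since each conjugacy class has a unique representative in $\DiffG^1(\T)$ and the cross-ratio observable is $\SL(2,\R)$-invariant, it suffices to show
\[
\FMeas{\sigma^2}\Big(\phi\in\DiffG^1(\T)\,:\, \log\phi' \text{ is not } \alpha\text{-H\"{o}lder with constant } M_0\Big) \leq \exp(-N/\sigma^2)
\]
for a sufficiently large $M_0$ depending on $\Lambda$ and $N$.

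The chaining step relies on a Gaussian tail bound for the dyadic increments
\[
\Delta_{n,k}(\phi) = \log\phi'\bigl((k+1)2^{-n}\bigr) - \log\phi'\bigl(k\,2^{-n}\bigr),\qquad 0\leq k<2^{n}.
\]
From the construction of $\FMeas{\sigma^2}$ in \citep*{BLW}, $\log\phi'$ is obtained (up to a gauge fixing) as a Girsanov-type transform of a Brownian-like object with variance parameter $\sigma^2$; in particular, the Radon--Nikodym derivative with respect to a Gaussian reference measure is controlled by a bounded exponential factor. This should yield an estimate of the form
\[
\FMeas{\sigma^2}\bigl(|\Delta_{n,k}| > \lambda\bigr) \leq C \cdot \PartF(\sigma^2)\cdot\exp\!\left(-c\,\lambda^{2}\,2^{n}/\sigma^{2}\right)
\]
with constants $C, c > 0$ independent of $\sigma\in(0,\Lambda)$ and of $n, k$. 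Taking $\lambda = M_0\cdot 2^{-n\alpha}$ and taking a union bound over $0\leq k<2^n$ and $n\geq 0$ gives
\[
\sum_{n\geq 0} 2^{n}\, C\,\PartF(\sigma^2)\,\exp\!\left(-c M_0^{2}\cdot 2^{n(1-2\alpha)}/\sigma^{2}\right).
\]
Because $\alpha < 1/2$, the exponent $1-2\alpha$ is strictly positive, so the series is dominated by its $n=0$ term, and choosing $M_0$ large (depending on $N$ and $\Lambda$) makes the total bound at most $\exp(-N/\sigma^{2})$. A standard reconstruction argument then shows that, on the complement of this bad event, the dyadic control of $|\Delta_{n,k}|$ by $M_0\, 2^{-n\alpha}$ upgrades to a full H\"{o}lder bound on $\log\phi'$ with constant comparable to $M_0/(1-2^{-\alpha})$, which is the input needed to invoke Theorem \ref{thrHolderEquiv}.

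The main obstacle is establishing the sub-Gaussian increment bound in the displayed inequality above, because the measure $\FMeas{\sigma^2}$ is only defined on a quotient and the relation to a Gaussian reference measure requires passing to a specific representative and controlling the Jacobian of the gauge fixing; this is precisely where the machinery of \citep*{BLW} is needed. The restriction $\alpha\geq 1/4$ is technical and comes from ensuring the Girsanov exponential factor is $\sigma$-uniformly integrable at the scale required by the chaining, while $\alpha<1/2$ is intrinsic and reflects the Brownian regularity of $\log \phi'$ under $\FMeas{\sigma^2}$.
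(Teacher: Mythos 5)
Your plan differs structurally from the paper's: the paper never passes through Theorem~\ref{thrHolderEquiv} or through increments of $\log\phi'$ at all. It works directly with the cross-ratio observables, using the exact moment formulas from \citep{LosevCorr} (Proposition~\ref{prpMainObsMoments}) together with the analytic identity in Proposition~\ref{prpKeyArccoshExpansion} to obtain a uniform-in-$\sigma$ exponential-moment bound (Proposition~\ref{prpBoundExpMoments} and Corollary~\ref{crlObsTailBound}), and then upgrades the resulting dyadic control by a two-scale interpolation based on the algebraic identities of Lemmas~\ref{lmmObsAlgEquality} and~\ref{lmmObsIneq}. The restriction $\alpha\ge 1/4$ appears there because the induction step needs $2^{-\alpha} < 9/10$, not for any integrability reason.

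The central step of your argument --- the displayed increment tail bound for $\log\phi'$ under the gauge-fixed pushforward of $\FMeas{\sigma^2}$ --- is asserted rather than derived, and the heuristic offered for it is incorrect. The Radon--Nikodym factor $\exp\{2\pi^2\sigma^{-2}\int_0^1\phi'^{\,2}\}$ is \emph{not} bounded (neither uniformly in $\phi$ nor uniformly in $\sigma$: $\int\phi'^{\,2}\ge 1$ with no upper bound, and the factor is precisely what makes $\PartF(\sigma^2)$ blow up), so one cannot transfer Gaussian tails from $\WS{\sigma^2}{0}{1}$ to $\FMeas{\sigma^2}$ by treating the tilting as a bounded density. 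Any attempt to split via H\"{o}lder's inequality returns you to the problem of controlling uniform-in-$\sigma$ exponential moments of $\int\phi'^{\,2}$ under $\mu_{\sigma^2}$, and simultaneously one must handle the Jacobian arising from the change of variables between the $(\Theta,\xi)$ coordinates of \eqref{defMeasureMu} and the gauge slice $\DiffG^1(\T)$ (which fixes three conditions, not one). None of this is supplied by a pointer to \citep*{BLW}; it is exactly the analytic content that the paper instead extracts from the explicit moment formulas for the $\SL(2,\R)$-invariant observables. Until this tail bound is actually established --- with the prefactor allowed to be of size $e^{C/\sigma^2}$ rather than $\PartF(\sigma^2)$, which is what the paper's Proposition~\ref{prpBoundExpMoments} achieves and what suffices once $M_0$ is chosen large --- the chaining and the reduction via Theorem~\ref{thrHolderEquiv} rest on an unproven and nontrivial estimate.
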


In particular, combination of Corollary \ref{crlHolderClassIsCompact} with Theorem \ref{thrHolderClassExpTail} gives exponential tightness of the family of measures $\Big\{\d \FMeas{\sigma^2}\Big\}_{\sigma\to 0}$.

\subsubsection{Minimisation of the action under constraints}
We also find the minimizers of the good rate function under certain constraints.
Fix $N\geq 2$. Let $t_1<t_2<\ldots <t_N<t_{N+1}=t_1+1$ be points on the circle $\T$.
Also, let $\left\{p_j\right\}_{j=1}^N$ be distinct points on  $\T$ lying in the anticlockwise (i.e., increasing) order, and $\left\{q_j\right\}_{j=1}^N$ be positive numbers.
We study the minimisation problem for action $\I(\cdot)$ on the following set:
\begin{equation}\label{eq_Diff_Constraints}
\DiffAppl = \left\{\phi\in\Diff^1(\T)\, \Big| \, \forall j: \, \phi(t_j)=p_j, \, \phi'(t_j)=q_j\right\}.
\end{equation}
That is, we want to find
\begin{equation}\label{eq_constr_min_problem}
\psi = \argmin_{\phi\in\DiffAppl}\, \I(\phi).
\end{equation}
Here, for convenience, we are working on the whole space $\Diff^1(\T)$ instead of the quotient $\Diff^1(\T)/\SL(2, \R)$.
Since imposing three constraints fixes representatives in each conjugacy class, this minimisation problem is equivalent to a minimisation problem on the quotient $\Diff^1(\T)/\SL(2, \R)$.
Moreover, constraints from \eqref{eq_Diff_Constraints} can be easily rewritten in terms of cross-ratio observables $\obs{\phi}{t_i}{t_j}$, so the minimisation problem \eqref{eq_constr_min_problem} can be naturally reformulated as a minimisation problem on $\Diff^1(\T)/\SL(2, \R)$ with constraints on the cross-ratio observables.

Notice that since $\I$, according to Theorem \ref{thrLDP}, is a good rate function on $\Diff^1(\T)/\SL(2, \R)$, the minimum above is indeed achieved.
Since the expression $\big(\log \phi'(t)\big)'^{\, 2}-4\pi^2\phi'^{\, 2}(\tau)$ is local, it is sufficient to optimise $\phi$ on each of the intervals $(t_j, t_{j+1})$ separately.
Below, we show how to optimise $\phi$ on $(t_1, t_2)$.

\begin{prp}\label{prp_action_min_constrained}
Let $\psi: [t_1, t_2]\to [p_1, p_2]$ be the minimizer of the functional
\begin{equation}
\phi\mapsto \dfrac{1}{2}\displaystyle\int_{t_1}^{t_2} 
\left[\Big(\log \phi'(\tau)\Big)'^{\, 2}-4\pi^2\phi'^{\, 2}(\tau)\right] \d\tau
\end{equation}
on the space $\LogSob(\T)$ under constraints 
\begin{align}
\phi(t_1) &= p_1, \quad \phi(t_2) = p_2,\\
\phi'(t_1) &= q_1, \quad \phi'(t_2) = q_2.
\end{align}
Denote
\begin{equation}
\varkappa := \frac{\pi\sqrt{q_1 q_2}(t_2-t_1)}{\sin\big(\pi(p_2-p_1)\big)}.
\end{equation}
Then for some $ a, b, c, d\in \R$ with $ad-bc= 1$ and any $\tau\in[t_1, t_2]$: 
\begin{enumerate}
\item If $\varkappa>1$
then
\begin{equation}
\tan\big(\pi\, \psi(\tau)\big) = \frac{a\tan(\lambda\tau) +b}{c \tan(\lambda\tau) +d},
\end{equation}
where $\lambda\in(0, \pi/(t_2 -t_1))$ is such that
\begin{equation}
\frac{\lambda (t_2-t_1)}{\sin\big(\lambda(t_2-t_1)\big)}
= \varkappa.
\end{equation}

\item  If $\varkappa=1$
then
\begin{equation}
\tan\big(\pi\,\psi(\tau)\big) = \frac{a \,\tau +b}{c \,\tau +d}.
\end{equation}

\item If $\varkappa<1$
then
\begin{equation}
\tan\big(\pi\,\psi(\tau)\big) = \frac{a\tanh(\lambda\tau) +b}{c \tanh(\lambda\tau) +d},
\end{equation}
where $\lambda>0$ is such that
\begin{equation}
\frac{\lambda (t_2-t_1)}{\sinh\big(\lambda(t_2-t_1)\big)}
=\varkappa.
\end{equation}
\end{enumerate}
\end{prp}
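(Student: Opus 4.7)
The plan is to reduce to a constrained ODE and exploit the fact that the minimizer has constant (generalized) Schwarzian. Introduce the variable $u(\tau) = \log \phi'(\tau)$, which lies in $\Sob$ for $\phi \in \LogSob(\T)$; the functional becomes
\begin{equation*}
J[u] = \frac{1}{2}\int_{t_1}^{t_2}\left[(u')^2 - 4\pi^2 e^{2u}\right] d\tau.
\end{equation*}
The boundary data $\phi'(t_i) = q_i$ translate into $u(t_i) = \log q_i$, while $\phi(t_i) = p_i$ is captured by the integral constraint $\int_{t_1}^{t_2} e^{u(\tau)} d\tau = p_2 - p_1$ (with $\phi$ reconstructed as $\phi(\tau) = p_1 + \int_{t_1}^{\tau} e^{u(s)}\, ds$). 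Treating the integral constraint via a Lagrange multiplier $\mu \in \R$, the Euler--Lagrange equation reads $u'' + 4\pi^2 e^{2u} = \mu e^u$, and the autonomy of the Lagrangian gives the first integral
\begin{equation*}
E := -\tfrac{1}{2}(u')^2 - 2\pi^2 e^{2u} + \mu e^u,\qquad \text{constant along the minimizer}.
\end{equation*}

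The crucial observation is that this constant equals $\mathrm{Schw}(\tan(\pi\psi))$. Since $\phi''/\phi' = u'$ gives $\mathrm{Schw}(\phi) = u'' - \tfrac12 (u')^2$, and the composition identity $\mathrm{Schw}(\tan(\pi\phi)) = \mathrm{Schw}(\phi) + 2\pi^2 \phi'^2$ (Schwarzian chain rule with $\mathrm{Schw}(\tan) = 2$) yields $\mathrm{Schw}(\tan(\pi\psi)) = u'' - \tfrac{1}{2}(u')^2 + 2\pi^2 e^{2u}$, substituting the Euler--Lagrange equation and the first integral causes all the $e^{u}$, $e^{2u}$, and $(u')^2$ terms to cancel, leaving $\mathrm{Schw}(\tan(\pi\psi)) = E$. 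A classical ODE fact then identifies functions of constant Schwarzian: $\mathrm{Schw}(F) \equiv c$ forces $F$ to be a M\"obius transformation of $\tan(\sqrt{c/2}\,\tau)$ for $c > 0$, of $\tau$ for $c = 0$, or of $\tanh(\sqrt{-c/2}\,\tau)$ for $c < 0$ (using M\"obius-invariance of $\mathrm{Schw}$ together with $\mathrm{Schw}(\tan) = 2$, $\mathrm{Schw}(\tanh) = -2$). Applied to $F = \tan(\pi\psi)$ this yields the three stated functional forms with $\lambda = \sqrt{|E|/2}$.

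To match the sign of $E$ (and the value of $\lambda$) to the datum $\varkappa$, I would use the $\SL(2,\R)$-invariance of the cross-ratio observable. Since $\theta \mapsto \tan(\pi\theta)$ conjugates the circle $\SL(2,\R)$-action to real-line M\"obius, $\tan(\pi\psi) = M \circ G$ with $G \in \{\tan(\lambda\tau),\, \tau,\, \tanh(\lambda\tau)\}$ means $\psi = \tilde M \circ \psi_0$ for a corresponding $\tilde M \in \SL(2,\R)$ and representative $\psi_0$ with $\tan(\pi\psi_0) = G$. By invariance,
\begin{equation*}
\obs{\psi}{t_1}{t_2} = \obs{\psi_0}{t_1}{t_2},
\end{equation*}
which is directly computed to be $\lambda/\sin(\lambda(t_2 - t_1))$, $1/(t_2 - t_1)$, or $\lambda/\sinh(\lambda(t_2 - t_1))$ in the three cases (using $\tanh(a)-\tanh(b) = \sinh(a-b)/(\cosh a\,\cosh b)$ in the hyperbolic case). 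Since the left-hand side equals $\pi\sqrt{q_1 q_2}/\sin(\pi(p_2 - p_1)) = \varkappa/(t_2 - t_1)$, multiplying by $(t_2 - t_1)$ recovers exactly the three formulas for $\varkappa$ stated in the proposition. Because $x/\sin x \colon (0, \pi) \to (1, \infty)$ and $x/\sinh x \colon (0, \infty) \to (0, 1)$ are bijections, the case is fixed by the position of $\varkappa$ relative to $1$, and $\lambda$ is then uniquely determined.

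The main technical point, rather than the Euler--Lagrange calculation itself, is to justify that the weak minimizer is smooth enough for the pointwise manipulations above; this is standard elliptic regularity for the semilinear equation $u'' = \mu e^u - 4\pi^2 e^{2u}$. Positivity of $\varkappa$ is automatic from $q_i > 0$ and the convention $p_2 - p_1 \in (0, 1)$, so that exactly one of the three cases applies.
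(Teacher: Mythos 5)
Your proof is correct, and the overall strategy (reduce via $u=\log\phi'$, apply a Lagrange multiplier for the integral constraint, then classify solutions and match $\varkappa$ through the cross-ratio) is essentially the paper's, but you take a cleaner route through the middle. The paper first changes variables to $f=\tan(\pi\phi-p-\pi/2)$, which converts the potential term $-4\pi^2\phi'^{2}$ into a boundary contribution and leaves the ``free'' problem of minimising $\tfrac12\int(\log f')'^{2}$; its Lagrange-multiplier Euler--Lagrange equation then becomes the Liouville equation $g''=\lambda_0 e^{g}$, which the paper's Lemma~\ref{lmmApplicationDerivOfLogDiffEq} solves explicitly by five-fold casework before re-assembling $f$. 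Your version skips that casework: you keep the constrained Lagrangian in the $u$ variable, write down the conserved energy of the autonomous system, and point out that this conserved quantity is nothing but $\mathrm{Schw}\big(\tan(\pi\psi)\big)$, so the minimiser is precisely a curve of \emph{constant} Schwarzian. The classification of constant-Schwarzian maps (M\"obius of $\tan$, identity, or $\tanh$ according to sign) then yields the three forms directly. This is the same idea that is implicitly present in the paper --- indeed, substituting the Liouville equation into $\mathrm{Schw}(f)=g''-\tfrac12(g')^{2}$ and using the first integral also shows $\mathrm{Schw}(f)$ is constant --- but you extract it before solving anything, which replaces the paper's five-case Lemma with the standard second-order ODE dichotomy. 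The final step (equating $\obs{\psi}{t_1}{t_2}$, evaluated on a model representative, with $\varkappa/(t_2-t_1)$) is identical in substance to the paper's computation $\pi\sqrt{\psi'(t_1)\psi'(t_2)}(t_2-t_1)/\sin\big(\pi[\psi(t_2)-\psi(t_1)]\big)=\sqrt{f'(t_1)f'(t_2)}(t_2-t_1)/(f(t_2)-f(t_1))$. Two small points you gloss over: the sign $ad-bc=+1$ rather than $-1$ needs a word (as in the paper, it follows from monotonicity of $\psi$ on $[t_1,t_2]$); and in the $\varkappa>1$ case you should note that $\psi$ being a diffeomorphism forces $\lambda(t_2-t_1)\in(0,\pi)$, which is what licences using the bijectivity of $x\mapsto x/\sin x$ on $(0,\pi)$.
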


\begin{rmrk}
It is easy to see that function $x\mapsto \frac{\sin x}{x}$ is a bijection $(0, \pi)\to (0, 1)$ and $x\mapsto \frac{\sinh x}{x}$ is a bijection $(0, \infty)\to (1, \infty)$, so $\lambda$ from the proposition is always well defined.
\end{rmrk}

\begin{rmrk}
It is possible also to compute $\I(\psi)$, but we do not find the exact value to be instructive.
\end{rmrk}

\subsection{Organisation of the paper}
In Section~\ref{sect_meas_construct} we recall the construction of the Schwarzian measure (the measure corresponding to the Schwarzian Field Theory) from \citep*{BLW}
(which follows the plan proposed in \citep{BelokurovShavgulidzeExactSolutionSchwarz, BelokurovShavgulidzeCorrelationFunctionsSchwarz}) as well as some properties of cross-ratio observables from \citep{LosevCorr}.

In Section~\ref{sectProofThrActionMin} we prove the main statements about the action $\I(\cdot)$. 
That is, we prove that it is $\SL(2, \R)$ invariant (Proposition~\ref{prpActionSLInvariance}) and find its global minimiser (Theorem~\ref{thrActionMin}).

In Section \ref{sectProofThrHolderEquiv} we prove the equivalence of H\"{o}lder conditions on $\Diff^1(\T)/\SL(2, \R)$, as defined using cross-ratios and by fixing the gauge (Theorem~\ref{thrHolderEquiv}).  

Sections \ref{sectWeakLDP} and \ref{sectExpEstimates} are devoted to the proof of the large deviations principle for $\left\{\d\FMeas{\sigma^2}\right\}_{\sigma>0}$ (Theorem~\ref{thrLDP}).
It is well known that if an exponentially tight family of measures satisfies a weak large deviations principle with a rate function $I(\cdot)$, then $I(\cdot)$ is a good rate function, and the family of measures in question satisfies large deviations principle with a good rate function $I(\cdot)$ (see, e.g., \citep[Lemma 1.2.18]{bookLDP}). 

In Section~\ref{sectWeakLDP} we prove that $\I(\cdot)$ is a rate function and that the family of measures $\left\{\d\FMeas{\sigma^2}\right\}_{\sigma>0}$ satisfies weak large deviations principle with the rate function $\I(\cdot)$.

In Section~\ref{sectExpEstimates} we prove exponential estimates and deduce that the measures $\left\{\d\FMeas{\sigma^2}\right\}_{\sigma >0}$ concentrate on $\MyHol{\alpha}{\cdot}$ up to exponentially small sets when $\alpha<1/2$ (Theorem~\ref{thrHolderClassExpTail}).
In particular, this proves exponential tightness of $\left\{\d\FMeas{\sigma^2}\right\}_{\sigma>0}$ (as a combination of Theorem~\ref{thrHolderClassExpTail} and Corollary~\ref{crlHolderClassIsCompact}). 
This finishes the proof of Theorem~\ref{thrLDP}.

\subsection{Preliminaries and notations}
\label{sectNotation}

Throughout the paper we will be using the following notations:

\begin{enumerate}

\item The unit circle is denoted by $\T=[0,1]/\{0\sim 1\}$, the nonnegative real numbers are denoted by $\R_+ = [0, \infty)$, and for the open disk in the complex plane of radius $r$ we use $\D_r = \left\{z\in \Compl: |z|<r\right\}$.
Moreover, for $s, t\in \T$, we write $t-s$ for the length of the interval going from $s$ to $t$ in positive direction.
In particular, $t-s\in [0,1)$.
We also use $\dist{s}{t} = \min\big\{t-s, 1-(t-s)\big\}$ for the distance on the circle. 

\item We use $\Diff^k(\T)$ for the set of oriented $C^k$-diffeomorphisms of $\T$, that is, $\phi\in \Diff^k(\T)$
can be identified with a $k$-times continuously differentiable function $\phi\colon\R\to \R$ satisfying
$\phi(\tau+1)=\phi(\tau)+1$ and $\phi'(\tau)>0$ for all $\tau \in \R$. Note that $\Diff^k(\T)$ is not a linear space.
The topology on $\Diff^k(\T)$ is inherited from the natural topology on $C^k(\T)$.
It turns $\Diff^{k}(\T)$ into a Polish (separable completely metrisable) space as well as a topological group.

\item We use $\SL(2, \R)$ to denote the group of conformal isomorphisms of the unit disk restricted to the boundary, which is identified with the unit circle $\T$.
Throughout the paper we consider the action of $\SL(2, \R)$ on $\Diff^1(\T)$ given by post-compositions.

\item We will abuse the notation and for $\phi\in \Diff^1(\T)$ denote its conjugacy class in $\Diff^1(\T)/\SL(2,\R)$ by $\phi$ as well.

\item Throughout the paper we will encounter expressions of the form $f\big(\arccosh[z]\big)$, for various even analytic functions $f$.
Even though $\arccosh[z]$ is not analytic at $z=0$, the composition $f\big(\arccosh[z]\big)$ still defines an analytic function around $z=0$.
More precisely, we identify $f\big(\arccosh[z]\big)$ with $\widetilde{f}\big(\arccosh^2[z]\big)$, where $\widetilde{f}$ is an analytic function such that $\widetilde{f}(\omega) = f(\sqrt{\omega})$, and  $\arccosh^2[z]$ is the analytic function described in statement below (see \citep{LosevCorr} for details).
\begin{stm}[\citep{LosevCorr}]\label{stmArccoshDef}
Function $\arccosh^2(z)$ can be analytically continued from $z\in [1, \infty)$ to $z \in \Dom := \left\{z\in\Compl \, \Big| \, \Real z\geq -1 \right\}$. 
Moreover, in this continuation $\arccosh^2(z) = -\arccos^2(z)$ for $z \in (-1, 1)$.
\end{stm}
\end{enumerate}

\section{Measure construction and observables}\label{sect_meas_construct}
\subsection{Schwarzian measure}
In this subsection we recall the rigorous construction and main properties of the measure corresponding to Schwarzian Field Theory from \citep*{BLW}, which are based on the plan from \citep{BelokurovShavgulidzeExactSolutionSchwarz, BelokurovShavgulidzeCorrelationFunctionsSchwarz}. 
We refer the reader to that paper for proofs and further details.
\medskip

The construction of the Schwarzian measure is based on the appropriate reparametrisation of  unnormalised version of the Brownian bridge measure.
This is a finite measure on $\Cfree[0,T] = \left\{f\in C[0,T]\, | \, f(0)=0\right\}$ formally corresponding to
\begin{equation} \label{e:BB-formaldensity}
\d\WS{\sigma^2}{a}{T}(\xi) = \exp\left\{-\frac{1}{2\sigma^2}\int_{0}^{T}\xi'^{\, 2} (t)\d t\right\} \delta\big(\xi(0)\big) \delta\big(\xi(T)-a\big)\prod_{\tau \in (0,T)}\d\xi(\tau).
\end{equation}

\begin{defn} \label{defn:BB}
  The unnormalised Brownian bridge measure with variance $\sigma^2 > 0$ is a finite Borel measure $\d\WS{\sigma^2}{a}{T}$ on $\Cfree[0, T]$ such that
  \begin{equation}\label{eq:25}
    \sqrt{2\pi T}\sigma \, \exp\left\{\frac{a^2}{2 T\sigma^2}\right\}\d\WS{\sigma^2}{a}{T}(\xi)
  \end{equation}
  is the distribution of a Brownian bridge $\big(\xi(t)\big)_{t\in[0,T]}$ with variance $\sigma^2$ and $\xi(0) = 0$, $\xi(T) = a$.
\end{defn}

In order to define the Schwarzian measure $\FMeas{\sigma^2}$,
we first need to define a finite measure $\mu_{\sigma^2}$ on $\Diff^1(\T)$ which is similar to what is known as the Malliavin--Shavgulidze measure;
see \cite[Section~11.5]{BogachevMalliavin}. %
Formally, this measure corresponds to
\begin{equation} \label{e:MeasureMuFormal}
\d\mu_{\sigma^2}(\phi) 
= \exp\left\{-\frac{1}{2\sigma^2}\int_{0}^{1}\left(\frac{\phi''(\tau)}{\phi'(\tau)}\right)^2\d\tau\right\} \prod_{\tau \in [0,1)}\frac{\d\phi(\tau)}{\phi'(\tau)}.
\end{equation}

We can make sense of this measure by defining it as a push-forward of an unnormalised Brownian bridge on $[0,1]$ with respect to a suitable change of variables.
We \emph{define} $\mu_{\sigma^2}$ by
\begin{equation}\label{defMeasureMu}
\d\mu_{\sigma^2}(\phi) \coloneqq 
\d \WS{\sigma^2}{0}{1}(\xi)
\otimes \d\Theta, \qquad \text{with } \phi(t) = \Theta + \A_{\xi}(t)\enspace (\mathrm{mod}\, 1), \text{ for } \Theta\in [0, 1),
\end{equation}
where $\d\Theta$ is the Lebesgue measure on $[0,1)$ and
\begin{equation} \label{defP}
\A(\xi)(t) \coloneqq \A_{\xi}(t) 
\coloneqq \frac{\int_{0}^t e^{\xi(\tau)}\d\tau}{\int_{0}^1 e^{\xi(\tau)}\d\tau}.
\end{equation}
The variable $\Theta$ corresponds to the value of $\phi(0)$.
Note that the map $\xi \mapsto \A(\xi)$ is a bijection between $\Cfree[0, 1]$
and $\Diff^1 [0,1]$ with inverse map
\begin{align} 
\A^{-1}: \Diff^1[0,1] &\to \Cfree[0, 1]\\
\varphi &\mapsto \log \varphi'(\cdot) - \log \varphi'(0).
\end{align}
Note that, with our choice of $C^1$ topology on $\Diff^1[0,1]$ and the usual supremum norm topology on $\Cfree[0, 1]$, we get that both $\A$ and $\A^{-1}$ are continuous.

In view of \eqref{eq:1} and \eqref{e:MeasureMuFormal},
the unquotiented Schwarzian measure is constructed as
\begin{equation} \label{defMeasureMeas}
\d\FMeasSL{\sigma^2}(\phi) = \exp\left\{ \frac{2\pi^2}{\sigma^2}\int_{0}^{1} \phi'^{\, 2}(\tau)\d\tau\right\}\d \mu_{\sigma^2}(\phi).
\end{equation}
Since $\mu_{\sigma^2}$ is supported on $\Diff^1(\T)$, this defines a Borel measure on $\Diff^1(\T)$. 
This is the unique (up to a multiplicative constant) measure satisfying a natural change of variables formula.
In particular, it is invariant under the action of $\SL(2,\R)$.
\begin{prp}\label{lemmaMeasureSLInv}
The measure $\FMeasSL{\sigma^2}$ is invariant under post-composition by elements of $\SL(2, \R)$. 
In other words, for any $\psi \in \SL(2, \R)$ and Borel $A\subset \Diff^1(\T)$ we have
\begin{equation}
  \FMeasSL{\sigma^2}\big(\psi\circ A\big) = \FMeasSL{\sigma^2}\big(A\big),
\end{equation}
where  $\psi\circ A \coloneqq \left\{\psi\circ \phi\, \big | \, \phi \in A \right\}$.
\end{prp}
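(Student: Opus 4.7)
The plan is to read off the invariance from the formal factorisation \eqref{eq:1},
\[
\d\FMeasSL{\sigma^2}(\phi) \,\sim\,
\exp\!\Big\{\tfrac{1}{\sigma^2}\textstyle\int_{0}^{1}\!\big[\Schw_\phi(\tau)+2\pi^2 \phi'^{\,2}(\tau)\big]\d\tau\Big\}
\prod_{\tau\in[0,1)}\tfrac{\d\phi(\tau)}{\phi'(\tau)},
\]
verify that both factors are $\SL(2,\R)$--invariant, and then make the manipulation rigorous in the $(\Theta,\xi)$ coordinates of \eqref{defMeasureMu}.

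The algebraic step is the pointwise identity
\[
\Schw_{\psi\circ\phi}(\tau) + 2\pi^2 (\psi\circ\phi)'^{\,2}(\tau)
= \Schw_\phi(\tau) + 2\pi^2 \phi'^{\,2}(\tau), \qquad \psi\in\SL(2,\R),
\]
which I would prove by combining the Schwarzian chain rule $\Schw(\psi\circ\phi) = (\Schw\psi)\circ\phi\cdot\phi'^{\,2} + \Schw\phi$ with the identity $\Schw\psi(t) = 2\pi^2\big(1-\psi'(t)^2\big)$; the latter follows by noting that $\tan(\pi\psi(\cdot))$ is a real M\"obius transformation of $\tan(\pi\,\cdot)$ (hence has zero Schwarzian) together with $\Schw(\tan(\pi z))=2\pi^2$. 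Integrating in $\tau$ over $\T$ then gives $\SL(2,\R)$--invariance of the action. The formal invariance of $\prod_\tau\d\phi(\tau)/\phi'(\tau)$ is the immediate cancellation $\d(\psi\circ\phi)(\tau)/(\psi\circ\phi)'(\tau) = \d\phi(\tau)/\phi'(\tau)$.

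To justify this rigorously, I would pass to the coordinates $\phi=\Theta+\A_\xi\,(\mathrm{mod}\,1)$, in which $\phi\mapsto\psi\circ\phi$ becomes $\Theta\mapsto\psi(\Theta)$ together with the $\xi$-shift
\[
\xi(t)\mapsto\xi(t)+\log\psi'(\phi(t))-\log\psi'(\Theta).
\]
The drift is $\xi$-dependent but $C^1$ in $t$ (since $\phi$ is $C^1$), so it lies in the Cameron--Martin space of the Brownian bridge. Applying the Girsanov/Cameron--Martin theorem for the unnormalised Brownian bridge from \citep*{BLW} and converting the resulting stochastic integral to a Riemann integral of $\phi$-quantities via It\^o's formula, one expects the Radon--Nikodym derivative to equal $\exp\!\big\{\tfrac{2\pi^2}{\sigma^2}\big[\int_0^1\phi'^{\,2}-\int_0^1(\psi\circ\phi)'^{\,2}\big]\d\tau\big\}$, with the Jacobian of the $\Theta$-map absorbed into the boundary terms of the It\^o calculation. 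This factor cancels exactly with the change in the exponential prefactor of \eqref{defMeasureMeas}, yielding the claim.

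The main obstacle will be the rigorous Girsanov step: exponential integrability of the drift and an explicit identification of the density must both be checked. Once the Girsanov-type theorem of \citep*{BLW} is invoked as a black box, the identification of the density with the Schwarzian quantity above is a direct It\^o computation using the chain rule identity established in the second paragraph.
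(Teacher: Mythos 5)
The paper does not actually prove Proposition~\ref{lemmaMeasureSLInv} in this article; it is imported from the companion paper \citep*{BLW}, and Section~\ref{sect_meas_construct} explicitly refers the reader there for proofs. So there is no internal proof to compare your proposal against, and the assessment below is on the merits of your argument alone.

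Your algebra is correct, and is essentially the content re-used in the proof of Proposition~\ref{prpActionSLInvariance}: the chain rule $\Schw_{\psi\circ\phi}=\big(\Schw_\psi\circ\phi\big)\phi'^{\,2}+\Schw_\phi$ together with the identity $\Schw_\psi = 2\pi^2\big(1-\psi'^{\,2}\big)$ for $\psi\in\SL(2,\R)$ (which you deduce exactly as the paper does, by passing through $\tan(\pi\,\cdot)$) yields the pointwise $\SL(2,\R)$-invariance of $\Schw_\phi+2\pi^2\phi'^{\,2}$. Your description of the action in $(\Theta,\xi)$-coordinates, $\Theta\mapsto\psi(\Theta)$ and $\xi\mapsto\xi+\log\psi'(\phi(\cdot))-\log\psi'(\Theta)$, is also correct and one verifies that the shifted path is again a bridge from $0$ to $0$.

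The real gap is exactly the step you flag and then elide. The shift $h(t)=\log\psi'(\phi(t))-\log\psi'(\Theta)$ is \emph{not} deterministic and \emph{not} adapted: through the normalisation $\int_0^1 e^{\xi}$ in $\A_\xi$, $\phi(t)$ depends on the entire path $\xi$ on $[0,1]$, not just on $\xi\big|_{[0,t]}$. Cameron--Martin only covers deterministic shifts, and the adapted Girsanov theorem does not apply either. What you need is an anticipative change-of-variables theorem for Gaussian measures (Ramer/Kusuoka type), whose Radon--Nikodym density carries an extra Carleman--Fredholm determinant of the derivative $D h$ in the Cameron--Martin direction; you must show that this determinant, the Jacobian $\psi'(\Theta)$ from the $\d\Theta$-factor, and the classical exponential terms all conspire to cancel with the change in the prefactor $\exp\{2\pi^2\sigma^{-2}\int\phi'^{\,2}\}$. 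Writing ``absorbed into the boundary terms of the It\^o calculation'' is a hope rather than an argument, and the coupling between the $\Theta$-change and the $\xi$-shift (since $h$ itself involves $\Theta$) means you cannot simply treat the two factors of $\d\WS{\sigma^2}{0}{1}\otimes\d\Theta$ independently. Unless the Girsanov statement you cite from \citep*{BLW} is already formulated for exactly this non-adapted, $\Theta$-coupled change of variables, your proposal has not closed the argument; that determinant computation is precisely the technical core of the proposition.
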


In particular, we remark that  $\FMeasSL{\sigma^2}$ is an infinite measure since $\SL(2,\R)$ has infinite Haar measure.
The Schwarzian measure on $\Diff^1(\T)/\SL(2,\R)$ with formal density \eqref{eq:1} is defined
as the quotient of $\FMeasSL{\sigma^2}$ by $\SL(2,\R)$; see Proposition~\ref{propMeasureFactor} and Definition~\ref{defn_schwarzian} below.

\begin{prp} \label{propMeasureFactor}
  There exists a unique Borel measure $\FMeas{\sigma^2}$ on $\Diff^1(\T)/\SL(2,\R)$ such that, for any nonnegative continuous functional $F\colon \Diff^{1}(\T) \to \R$,
  \begin{equation}
    \label{e:measurefactor}
    \int\limits_{\mathclap{\Diff^1(\T)}}\d\FMeasSL{\sigma^2}(\phi)\, F(\phi)
    \hspace*{1em}
    =
    \hspace*{2.5em}
    \int\limits_{\mathclap{\Diff^1(\T)/\SL(2,\R)}}\d\FMeas{\sigma^2}([\phi])
    \hspace*{1em}
    \int\limits_{\mathclap{\SL(2,\R)}} \d\nu_H(\psi)\, F(\psi\circ\phi),
  \end{equation}
  where the right-hand side is well-defined, since the second integral only depends on $[\phi]$.
\end{prp}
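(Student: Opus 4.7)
The plan is to realise $\FMeas{\sigma^2}$ as the measure induced on the quotient by disintegrating $\FMeasSL{\sigma^2}$ along the fibres of the $\SL(2,\R)$ action. The key structural input is that the gauge-fixed section $\DiffG^1(\T)$ from \eqref{eq_gauge_fix_def} globally trivialises $\Diff^1(\T)$ as a Borel principal $\SL(2,\R)$-bundle over $\Diff^1(\T)/\SL(2,\R)$.

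First, I would set up the trivialisation explicitly. By sharp $3$-transitivity of $\SL(2,\R)$ on the circle, for each $\phi\in \Diff^1(\T)$ there is a unique $\psi_\phi\in \SL(2,\R)$ sending $\bigl(\phi(0),\phi(1/3),\phi(2/3)\bigr)$ to $(0,1/3,2/3)$, and the map
\begin{equation}
\Phi:\DiffG^1(\T)\times \SL(2,\R)\to \Diff^1(\T),\qquad (\phi_0,\eta)\mapsto\eta\circ\phi_0,
\end{equation}
is a homeomorphism with inverse $\phi\mapsto\bigl(\psi_\phi\circ\phi,\,\psi_\phi^{-1}\bigr)$. Under $\Phi$, the left-composition action of $\SL(2,\R)$ on $\Diff^1(\T)$ corresponds to left translation in the $\SL(2,\R)$-factor.

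Next, let $\widetilde{\nu}:=(\Phi^{-1})_*\FMeasSL{\sigma^2}$ be the push-forward to $\DiffG^1(\T)\times \SL(2,\R)$. By Proposition \ref{lemmaMeasureSLInv}, $\widetilde{\nu}$ is invariant under left translation in the second factor. For non-negative $g\in C_c(\SL(2,\R))$, the assignment
\begin{equation}
A\mapsto \lambda_g(A):=\int_{A\times \SL(2,\R)}g(\eta)\,d\widetilde{\nu}(\phi_0,\eta)
\end{equation}
is a Borel measure on $\DiffG^1(\T)$. The left-invariance of $\widetilde{\nu}$ in $\eta$, together with the uniqueness of left Haar measure on $\SL(2,\R)$, forces $\lambda_g=\bigl(\int g\,d\nu_H\bigr)\mu_S$ for a single Borel measure $\mu_S$ on $\DiffG^1(\T)$ independent of $g$, and a monotone-class argument upgrades this to $\widetilde{\nu}=\mu_S\otimes\nu_H$. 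Defining $\FMeas{\sigma^2}$ as the push-forward of $\mu_S$ along the homeomorphism $\DiffG^1(\T)\cong \Diff^1(\T)/\SL(2,\R)$, the integration formula \eqref{e:measurefactor} follows by applying Fubini to $\mu_S\otimes\nu_H$ after the change of variables $\phi=\eta\circ\phi_0$. Uniqueness is obtained by plugging $F(\phi)=f\bigl([\phi]\bigr)\,g(\psi_\phi^{-1})$ with $\int g\,d\nu_H=1$ into \eqref{e:measurefactor}: since $\psi_{\psi\circ\phi}^{-1}=\psi\circ\psi_\phi^{-1}$, the inner Haar integral collapses to $f([\phi])$ by right-invariance of $\nu_H$, pinning down $\int f\,d\FMeas{\sigma^2}$ for every bounded continuous $f$.

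The principal obstacle is the disintegration step: since $\nu_H$ is infinite, standard $\sigma$-finite disintegration theorems do not apply directly to a family of such measures, so the product structure must be extracted indirectly by testing against compactly supported $g\in C_c(\SL(2,\R))$, reducing to a finite-measure comparison on each fibre that can then be reassembled by monotone convergence. Beyond this, the only mild subtlety is verifying jointly continuity of composition and of $\phi\mapsto\psi_\phi$ in the $C^1$ topology, which ensures that $\Phi$ is indeed a homeomorphism and that the test functions used for uniqueness are continuous.
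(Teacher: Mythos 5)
This paper does not itself prove Proposition~\ref{propMeasureFactor}: it is recalled in Section~\ref{sect_meas_construct} from \citep*{BLW}, with the reader explicitly referred there for the proof, so there is no in-text argument to compare against.

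Your construction nonetheless has the correct shape: trivialise $\Diff^1(\T)\cong\DiffG^1(\T)\times\SL(2,\R)$ via sharp $3$-transitivity on positively cyclically ordered triples, push $\FMeasSL{\sigma^2}$ forward, translate Proposition~\ref{lemmaMeasureSLInv} into left invariance in the $\SL(2,\R)$ factor, and extract a product form. The algebra in the uniqueness step, the identity $\psi_{\psi\circ\phi}^{-1}=\psi\circ\psi_\phi^{-1}$, the collapse of the inner Haar integral via right invariance, and the appeal to unimodularity of $\SL(2,\R)$ are all sound. The genuine gap is in the disintegration step. Writing $\nu_A(B):=\widetilde{\nu}(A\times B)$, left invariance of $\widetilde{\nu}$ only tells you that $\nu_A$ is a left-invariant Borel measure on $\SL(2,\R)$; uniqueness of Haar measure identifies $\nu_A$ with a scalar multiple of $\nu_H$ only once one knows $\nu_A$ is locally finite, and your $C_c$-testing together with ``reassembly by monotone convergence'' presupposes exactly this. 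Without such an input $\nu_A$ could be, say, counting measure (left-invariant, not a multiple of Haar), $\mu_S$ could fail to be $\sigma$-finite, and the monotone-class upgrade to $\widetilde{\nu}=\mu_S\otimes\nu_H$ would break. You attribute the difficulty to the infinite total mass of $\nu_H$, which $C_c$ test functions do handle, but the finiteness of $\lambda_g(A)$ itself requires a further estimate. To close this, exhaust $\DiffG^1(\T)$ by the bounded Borel sets $A_n=\{\phi_0\in\DiffG^1(\T):\|\log\phi_0'\|_\infty\le n\}$; for each $n$ and each compact $K\subset\SL(2,\R)$ the set $\Phi(A_n\times K)$ is bounded in $\Diff^1(\T)$, so the density $\exp\bigl(\tfrac{2\pi^2}{\sigma^2}\int_0^1\phi'^{\,2}\bigr)$ from \eqref{defMeasureMeas} is bounded there and $\mu_{\sigma^2}$ is finite, giving $\nu_{A_n}(K)<\infty$. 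This makes each $\nu_{A_n}$ Radon and $\mu_S$ $\sigma$-finite, after which your argument goes through.
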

\begin{defn} \label{defn_schwarzian}
  The Schwarzian measure is given by $\FMeas{\sigma^2}$.
\end{defn}
This measure is finite, and moreover, its total mass can be computed explicitly.

\begin{prp}\label{prpMainPartFunct}
The partition function (i.e., total mass) of $\FMeas{\sigma^2}$ is given by
\begin{equation}
\PartF(\sigma^2) =  
    \left(\frac{2\pi}{\sigma^2}\right)^{3/2} \exp\left(\frac{2\pi^2}{\sigma^2}\right)
    =\int_0^{\infty} e^{-{\sigma^2k^2}/{2}} \sinh(2\pi k) \, 2 k \d k.
\end{equation}
\end{prp}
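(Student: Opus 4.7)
I would first reduce the identity between the two closed forms to a Gaussian calculation. Writing $2\sinh(2\pi k)=e^{2\pi k}-e^{-2\pi k}$ and applying $k\mapsto -k$ to the second piece, the right-hand integral becomes $\int_{-\infty}^{\infty}e^{-\sigma^2 k^2/2+2\pi k}\,k\,dk$. Completing the square as $-\tfrac{\sigma^2}{2}(k-2\pi/\sigma^2)^2+2\pi^2/\sigma^2$ and shifting $k=u+2\pi/\sigma^2$ splits the integral into two elementary Gaussian moments whose sum is $(2\pi/\sigma^2)^{3/2}\exp(2\pi^2/\sigma^2)$. So it suffices to derive the spectral (integral) form, which is the one that arises most naturally from stochastic analysis.

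To compute $\PartF(\sigma^2)=\FMeas{\sigma^2}(\Diff^1(\T)/\SL(2,\R))$, I would apply Proposition~\ref{propMeasureFactor} with a continuous gauge-fixing functional $F\colon\Diff^1(\T)\to\R_+$ satisfying $\int_{\SL(2,\R)}F(\psi\circ\phi)\,d\nu_H(\psi)\equiv 1$ (constructed, for example, by smoothing a characteristic function of a tubular neighbourhood of $\DiffG^1(\T)$ and dividing by the corresponding Haar--Jacobian along orbits). Plugging this $F$ into \eqref{e:measurefactor} yields $\PartF(\sigma^2)=\int F\,d\FMeasSL{\sigma^2}$. Substituting \eqref{defMeasureMeas} and the parametrisation $\phi=\Theta+\A_\xi\ (\mathrm{mod}\,1)$ from \eqref{defMeasureMu}, and using $\phi'(\tau)=e^{\xi(\tau)}/\int_0^1 e^{\xi(s)}ds$, rewrites the partition function as a single Brownian bridge expectation
\begin{equation*}
\PartF(\sigma^2)=\int_0^1\!d\Theta\int_{\Cfree[0,1]}\exp\!\left(\frac{2\pi^2}{\sigma^2}\,\frac{\int_0^1 e^{2\xi(\tau)}d\tau}{\bigl(\int_0^1 e^{\xi(s)}ds\bigr)^{2}}\right)F(\Theta+\A_\xi)\,d\WS{\sigma^2}{0}{1}(\xi).
\end{equation*}

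The main step, and the main obstacle, is to evaluate this Brownian bridge integral in closed form. I would pass to the additive exponential functional $A_t=\int_0^t e^{\xi(s)}ds$ and invoke the Lamperti time change together with Matsumoto--Yor / Dufresne-type identities, which recast the non-local quadratic in $e^{\xi}$ as a tractable functional of a Bessel-type diffusion. The joint law of $\bigl(A_1,\xi(1)\bigr)$ admits a classical spectral decomposition in modified Bessel functions $K_{ik}$; imposing the bridge constraint $\xi(1)=0$ and integrating out the gauge-fixed coordinates produces a spectral measure of the form $2k\sinh(2\pi k)\,dk$, against which the remaining Gaussian factor $e^{-\sigma^2 k^2/2}$ integrates to give the claimed $\int_0^\infty e^{-\sigma^2 k^2/2}\sinh(2\pi k)\,2k\,dk$. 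The key difficulty is that the Radon--Nikodym factor in~\eqref{defMeasureMeas} is non-local in $\xi$ and cannot be removed by any Cameron--Martin shift; the three-dimensional gauge freedom provided by the $\SL(2,\R)$ invariance (Proposition~\ref{lemmaMeasureSLInv}) is precisely what is needed to eliminate three moments of the bridge and reduce the expression to a tractable Bessel spectral integral, with the overall constant $(2\pi/\sigma^2)^{3/2}$ emerging from the Gaussian fluctuation calculation transverse to the orbit.
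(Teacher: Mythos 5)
This proposition is stated in the paper as a recall from \citep{BLW}; the paper itself does not contain a proof, so there is no internal argument to compare your proposal against. I can only assess the proposal on its own terms.

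Your preliminary reduction is fine: symmetrising $2\sinh(2\pi k)=e^{2\pi k}-e^{-2\pi k}$, completing the square, and shifting indeed converts $\int_0^\infty e^{-\sigma^2k^2/2}\sinh(2\pi k)\,2k\,\d k$ into $(2\pi/\sigma^2)^{3/2}e^{2\pi^2/\sigma^2}$. But the rest of the proposal is a plan, not a proof. You never construct the gauge-fixing functional $F$ beyond a one-sentence gesture at a tubular neighbourhood and a ``Haar--Jacobian,'' even though the $(2\pi/\sigma^2)^{3/2}$ prefactor is supposed to come exactly from that transverse Gaussian computation; that calculation is the hard part and is left entirely implicit. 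More seriously, the central evaluation of
\begin{equation*}
\int_0^1\!\d\Theta\int_{\Cfree[0,1]}\exp\!\left(\frac{2\pi^2}{\sigma^2}\,\frac{\int_0^1 e^{2\xi(\tau)}\d\tau}{\bigl(\int_0^1 e^{\xi(s)}\d s\bigr)^{2}}\right)F(\Theta+\A_\xi)\,\d\WS{\sigma^2}{0}{1}(\xi)
\end{equation*}
is handled by invoking ``Lamperti time change together with Matsumoto--Yor / Dufresne-type identities,'' but you never specify which identity, why it applies to this particular non-local functional (a ratio $\int e^{2\xi}/(\int e^\xi)^2$, not a one-sided exponential functional), or how the spectral weight $2k\sinh(2\pi k)\,\d k$ and the Gaussian factor $e^{-\sigma^2k^2/2}$ actually emerge. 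The delicate point that the coupling $2\pi^2$ is critical (for larger couplings the integral diverges) is not addressed, yet a correct proof must implicitly or explicitly see why the integral converges precisely there. As written, the proposal is a plausible outline of an approach that could in principle work, but every nontrivial step is asserted rather than carried out, so it does not constitute a proof of the proposition.
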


\subsection{Observables}
Here we recall the main properties of cross-ratio observables obtained in \citep{LosevCorr}.

The natural observables on the space $\Diff^1(\T)/\SL(2, \R)$ are given by cross-ratios \eqref{eqDefObs_intro}, since they depend only on the conjugacy class under $\SL(2, \R)$ action.
\begin{prp}\label{prpObsSLInvar}
Observables $\obs{\phi}{s}{t}$ are invariant under post-compositions by elements of M\"{o}bius transformations. 
In other words, if $\psi\in \SL(2, \R)$, then
\begin{equation}
\obs{\psi\circ\phi}{s}{t} = \obs{\phi}{s}{t}.
\end{equation}
In particular, they induce well-defined observables on $\Diff^1(\T)/\SL(2, \R)$, which we, slightly abusing the notation, also denote by $ \obs{\phi}{\cdot}{\cdot}$.
\end{prp}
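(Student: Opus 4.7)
The plan is to reduce the $\SL(2,\R)$ invariance of the observables to a single algebraic identity for M\"obius transformations of $\T$, and then bootstrap to the general case via the chain rule. Concretely, the first step is to establish that for every $\psi \in \SL(2,\R)$,
\begin{equation}\label{eq:plan_key}
\frac{\sqrt{\psi'(t)\psi'(s)}}{\sin\big(\pi[\psi(t)-\psi(s)]\big)} \;=\; \frac{1}{\sin\big(\pi[t-s]\big)}, \qquad s,t \in \T.
\end{equation}
Once \eqref{eq:plan_key} is in hand, the proposition follows in one line: writing $(\psi\circ\phi)'(t) = \psi'(\phi(t))\,\phi'(t)$ from the chain rule, substituting into the definition of $\obs{\psi\circ\phi}{s}{t}$, and applying \eqref{eq:plan_key} to the pair $\big(\phi(s),\phi(t)\big)$, the factors $\sqrt{\psi'(\phi(t))\psi'(\phi(s))}$ and $\sin(\pi[\psi(\phi(t))-\psi(\phi(s))])$ cancel, leaving exactly $\obs{\phi}{s}{t}$.

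To establish \eqref{eq:plan_key}, I would parametrize the boundary of the unit disk by $z(t)=e^{2\pi i t}$, giving the elementary chord-length formula $|z(t)-z(s)| = 2\big|\sin\pi(t-s)\big|$. Any M\"obius transformation of the disk can be written, in complex coordinates, as $Z(z)=(az+b)/(cz+d)$ with $ad-bc=1$, and satisfies the classical two-point identities
\begin{equation}
Z(z)-Z(w) = \frac{z-w}{(cz+d)(cw+d)}, \qquad Z'(z) = \frac{1}{(cz+d)^2},
\end{equation}
which together imply $|Z(z)-Z(w)|^2 = |Z'(z)|\,|Z'(w)|\,|z-w|^2$ — i.e.\ the conformal rescaling of chord length by the geometric mean of the derivatives. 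Restricted to the boundary and using the parametrizations $z=e^{2\pi i t}$ and $Z(z)=e^{2\pi i \psi(t)}$, differentiation shows $|Z'(e^{2\pi i t})|=\psi'(t)$, so the chordal identity translates term by term into \eqref{eq:plan_key} up to absolute values.

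The only real subtlety is the sign/branch convention that removes the absolute values. I would work with the lift of $\psi$ to $\R$ normalized by $\psi(x+1)=\psi(x)+1$, so that orientation-preservation ($\psi'>0$) forces $\psi(t)-\psi(s)\in(0,1)$ whenever $t-s\in(0,1)$. With this convention both $\sqrt{\psi'(t)\psi'(s)}$ and $\sin(\pi[\psi(t)-\psi(s)])$ are positive, and \eqref{eq:plan_key} holds without any sign ambiguity; the degenerate case $s=t$ is trivial. I do not anticipate any substantive obstacle — the entire argument is essentially the observation that cross-ratios are M\"obius invariants packaged in a slightly unusual (infinitesimal, two-point) form.
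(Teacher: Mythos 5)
Your proposal is correct and complete. This paper does not actually prove Proposition~\ref{prpObsSLInvar} — it is recalled from the companion paper \citep{LosevCorr} — so there is no in-text proof to compare against; but the argument you give is the natural one and all steps check out. The reduction via the chain rule to the single identity
\begin{equation}
\sqrt{\psi'(t)\psi'(s)}\,\sin\big(\pi[t-s]\big) \;=\; \sin\big(\pi[\psi(t)-\psi(s)]\big), \qquad \psi\in\SL(2,\R),
\end{equation}
is exactly right, and your derivation of that identity from the Möbius chord-scaling law $|Z(z)-Z(w)|^2=|Z'(z)|\,|Z'(w)|\,|z-w|^2$, the chord-length formula $|e^{2\pi i t}-e^{2\pi i s}|=2|\sin\pi(t-s)|$, and the boundary identification $|Z'(e^{2\pi i t})|=\psi'(t)$ is clean. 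You also correctly spot and dispose of the one subtlety: upon taking square roots, both sides are nonnegative because $\psi$ is orientation-preserving and the paper's convention has $t-s\in[0,1)$ (so $\psi(t)-\psi(s)\in[0,1)$ as well), which removes the absolute-value ambiguity. The degenerate case $s=t$ is vacuous since $\obs{\phi}{s}{t}$ is only considered for $s\neq t$. No gaps.
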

Importantly, it is possible to compute correlation functions of cross-ratios \citep{LosevCorr}.
Here we will only need to know their moments.
\begin{defn}
For all $l, k, w \in \R$, we define $\Gamma(l\pm ik\pm iw)$ as
\begin{equation}
\Gamma(l\pm ik\pm iw) := 
\Gamma(l + ik + iw)  \Gamma(l + ik - iw)  \Gamma(l - ik + iw)  \Gamma(l - ik - iw) .
\end{equation}
\end{defn}

\begin{prp}\label{prpMainObsMoments}
Moments of cross-ratio observables for positive integers $l$ are given by
\begin{multline}
\int \obs{\phi}{s}{t}^{l}\d\FMeas{\sigma^2}(\CnjCl{\phi}) 
= 
\int_{\R_+^{2}} 
\frac{\Gamma\big(\frac{l}{2} \pm i k_1 \pm i k_2\big)}{2\pi^2\, \Gamma(l)}\cdot\left(\frac{\sigma^2}{2}\right)^l
\\
\times
\exp\left(-\frac{(t-s)\sigma^2}{2}\cdot k_1^2 -\frac{\big(1-(t-s)\big)\sigma^2}{2}\cdot k_2^2\right)  
\sinh(2\pi k_1)\, 2 k_1 \sinh(2\pi k_2)\, 2 k_2 \d k_1 \d k_2.
\end{multline}
\end{prp}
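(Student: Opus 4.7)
The plan is to reduce the moment integral to a spectral sum on the homogeneous space $\SL(2,\R)/\mathrm{SO}(2)$ via the Brownian bridge representation of $\FMeas{\sigma^2}$ recalled above. The key structural insight is that the right-hand side precisely matches the form of a matrix element of a bilocal operator sandwiched between two heat-kernel propagators: the measure $\sinh(2\pi k)\cdot 2k\,dk$ is the Plancherel measure of the principal series of $\SL(2,\R)$ entering via completeness, the Gaussian factors $\exp(-\sigma^2 k_i^2 T_i/2)$ are the eigenvalues of the propagator on the two arcs of lengths $T_1 = t-s$ and $T_2 = 1-(t-s)$ into which the circle is split by the insertion points, and the $\Gamma$-factor encodes the matrix element of the observable in the momentum basis. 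This refines the partition function formula in Proposition~\ref{prpMainPartFunct}, which has the same structure with no insertion.

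First I would use \eqref{defMeasureMu}--\eqref{defMeasureMeas} together with Proposition~\ref{propMeasureFactor} to rewrite the integral as an expectation against the unnormalised Brownian bridge $\WS{\sigma^2}{0}{1}$ reweighted by $\exp\!\left(\tfrac{2\pi^2}{\sigma^2}\int_0^1 \phi'^{\,2}\right)$, after fixing a convenient gauge to handle the $\SL(2,\R)$ quotient (using $\SL(2,\R)$ invariance of $\obs{\phi}{s}{t}$ from Proposition~\ref{prpObsSLInvar}). Using rotation invariance to set $s=0$ and writing $\phi'(\tau) = e^{\xi(\tau)}/\int_0^1 e^{\xi}$ together with the analogous expression for $\phi(t)-\phi(0)$, the observable $\obs{\phi}{0}{t}^l$ becomes an explicit functional of $\xi$ and of the two arc integrals of $e^{\xi}$.

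Next I would split the computation at the insertion point $t$ by conditioning on $\xi(t)$ and using the Markov property of the Brownian bridge. The two resulting Brownian bridge integrals, each reweighted by $\exp(\tfrac{2\pi^2}{\sigma^2}\int\phi'^{\,2})$ restricted to the corresponding arc, should be recognised as matrix elements of the heat semigroup on (a cover of) the hyperbolic plane: the quadratic functional $\int_0^T \xi'^{\,2}$ in the bridge density together with the Liouville-type potential $\int_0^T e^{2\xi}/(\int e^\xi)^2$ produces the Schr\"odinger operator whose spectral/Plancherel decomposition yields the measures $\sinh(2\pi k_i)\cdot 2k_i\,dk_i$ and the time-dependent eigenvalues $\exp(-\sigma^2 k_i^2 T_i/2)$. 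This is the same mechanism that already yields the partition function in Proposition~\ref{prpMainPartFunct}.

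The residual integrand, encapsulating the dependence of $\obs{\phi}{0}{t}^l$ on $\xi(t)$ and on the ratio of the two arc integrals, then condenses into a single matrix element of the observable between momentum eigenstates $k_1,k_2$. The main obstacle I expect is precisely the explicit evaluation of this matrix element in closed form as $\Gamma(\tfrac{l}{2}\pm ik_1\pm ik_2)/(2\pi^2\Gamma(l))\cdot(\sigma^2/2)^l$: I would attempt this either by reducing to a Mellin-type identity of the form $\int_0^\infty (\cosh y-\cos\theta)^{-l}\cos(ky)\,dy$ and using classical beta/Gamma identities, or by invoking the structure of intertwiners/Clebsch--Gordan coefficients for tensor products of principal series of $\SL(2,\R)$. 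Rigorous justification of the Fubini interchanges and of convergence of all intermediate spectral integrals in this infinite-dimensional setting is the technical core of the argument.
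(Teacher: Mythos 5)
This paper does not prove Proposition~\ref{prpMainObsMoments}: it is quoted without proof in Section~\ref{sect_meas_construct} as one of the results ``recalled'' from the companion paper \citep{LosevCorr}, and the present manuscript uses it only as an input (e.g.\ in the proof of Proposition~\ref{prpBoundExpMoments}). There is therefore no in-paper argument against which your attempt can be directly matched, and a complete proof is genuinely out of scope for the scale of this submission.

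That said, your outline does point in the right direction at the strategic level. You correctly identify the role of the Brownian-bridge representation \eqref{defMeasureMu}--\eqref{defMeasureMeas}, the $\SL(2,\R)$ gauge fixing via Propositions~\ref{prpObsSLInvar} and~\ref{propMeasureFactor}, the splitting of the circle at the insertion points using the Markov property, and the appearance of the $\SL(2,\R)$ Plancherel measure $\sinh(2\pi k)\,2k\,\d k$ with heat-kernel factors $\exp(-\sigma^2 k_i^2 T_i/2)$ on each arc, exactly mirroring the derivation of Proposition~\ref{prpMainPartFunct}. The ``Mellin-type identity'' you speculate about is essentially Proposition~\ref{prpKeyArccoshExpansion} in the paper (which expands $\cos(2k\arccosh[\cosh(\beta/2)-z])$ in powers of $z$ with coefficients $\Gamma(\tfrac{l}{2}\pm ik\pm iw)/(2\pi^2\Gamma(l))$), so that conjectured ingredient is independently corroborated. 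What you have written remains, however, a sketch and not a proof: the two steps you flag as obstacles --- the closed-form evaluation of the bilocal matrix element and the rigorous spectral decomposition of the Liouville-type reweighting of the bridge, together with the Fubini/convergence justifications --- are precisely the nontrivial content of the proof and are not carried out. To actually prove the proposition you would need to reproduce the analysis of \citep{LosevCorr}, or at minimum import a sharp statement of the spectral resolution of the reweighted bridge semigroup and then derive the $\Gamma$-factor from Proposition~\ref{prpKeyArccoshExpansion}.
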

Using this, it is possible to show that cross-ratios have finite exponential moments.
\begin{prp}\label{prpExpMoment}
For any $\sigma>0$ and any $s\neq t \in \T$, 
\begin{equation}
\int  \exp\left\{ \frac{8}{\sigma^2}\, \obs{\phi}{s}{t} \right\} \d\FMeas{\sigma^2}(\CnjCl{\phi}) < \infty.
\end{equation}
\end{prp}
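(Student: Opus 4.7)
The plan is to Taylor-expand the exponential in powers of $\obs{\phi}{s}{t}$ and reduce the problem to controlling the sum of integer moments given by Proposition~\ref{prpMainObsMoments}. Since the cross-ratio $\obs{\phi}{s}{t}$ is positive (being a ratio of positive quantities whenever $s\neq t$), the terms of its Taylor expansion are non-negative, and monotone convergence gives
\begin{equation}
\int \exp\!\left\{\frac{8}{\sigma^2}\obs{\phi}{s}{t}\right\} \d\FMeas{\sigma^2}(\phi)
\;=\; \PartF(\sigma^2) \,+\, \sum_{l=1}^{\infty}\frac{1}{l!}\!\left(\frac{8}{\sigma^2}\right)^{\!l} \!\int \obs{\phi}{s}{t}^{\, l}\,\d\FMeas{\sigma^2}(\phi).
\end{equation}
Substituting the formula of Proposition~\ref{prpMainObsMoments}, the crucial identity $(8/\sigma^2)^{l}(\sigma^2/2)^{l}=4^{l}$ removes all $\sigma$-dependence from the combinatorial prefactors, and Tonelli's theorem (all integrands non-negative) allows interchanging the sum with the integral over $\R_+^{2}$.

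After the swap, the inner series becomes
\begin{equation}
\Sigma(k_1,k_2)\;:=\;\sum_{l=1}^{\infty}\frac{4^l\,\Gamma(l/2\pm ik_1\pm ik_2)}{l!\,\Gamma(l)},
\end{equation}
and my task is to bound this uniformly in $(k_1,k_2)\in\R_+^{2}$. For this I would use the classical inequality $|\Gamma(x+iy)|\le\Gamma(x)$ for $x>0$, $y\in\R$, which follows at once from the Weierstrass product for $1/\Gamma$. Writing $\Gamma(l/2\pm ik_1\pm ik_2)=|\Gamma(l/2+i(k_1+k_2))|^{2}\cdot|\Gamma(l/2+i(k_1-k_2))|^{2}$ then yields the pointwise majorisation $\Gamma(l/2\pm ik_1\pm ik_2)\le\Gamma(l/2)^{4}$. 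Stirling's formula gives $4^l\,\Gamma(l/2)^{4}/(l!\,\Gamma(l))=O(l^{-2})$: at $l=2m$, applying $\binom{2m}{m}\sim 4^m/\sqrt{\pi m}$ reduces the term to the asymptotic $2\pi/m^{2}$, and the odd case is analogous. Hence $\Sigma$ is bounded by a finite absolute constant.

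Once $\Sigma$ is under control, the remaining outer integral converges: for $s\neq t$ both $t-s$ and $1-(t-s)$ are strictly positive, so the Gaussian envelope $\exp(-\tfrac{(t-s)\sigma^2}{2}k_1^{2}-\tfrac{(1-(t-s))\sigma^2}{2}k_2^{2})$ dominates the exponential growth of $\sinh(2\pi k_1)\sinh(2\pi k_2)$ in the tails, against the polynomial weight $4k_1 k_2$. The main obstacle here is the sharpness of the estimate: the constant $8$ in the statement is calibrated precisely so that $(8/\sigma^{2})(\sigma^{2}/2)=4$ matches the critical geometric rate set by Stirling, since $l!\,\Gamma(l)/\Gamma(l/2)^{4}$ grows like $16^{m}\cdot m^{2}/(2\pi)$ for $l=2m$. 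Any constant strictly larger than $8$ would make the comparison series $\sum_l 4^l\Gamma(l/2)^{4}/(l!\,\Gamma(l))$ diverge, so the crude step $|\Gamma(x+iy)|\le\Gamma(x)$ is essentially saturated and no slack is wasted.
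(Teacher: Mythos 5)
Your proof is correct. The paper itself does not re-prove Proposition~\ref{prpExpMoment} (it is recalled from \citep{LosevCorr}, with the preceding text only indicating that it follows from Proposition~\ref{prpMainObsMoments}); what the paper does spell out is the related estimate in Proposition~\ref{prpBoundExpMoments}, whose proof goes through the generating-function identity of Proposition~\ref{prpKeyArccoshExpansion} to resummate the series $\sum_l \Gamma(l/2\pm ik_1\pm ik_2)\,\lambda^l/(l!\,\Gamma(l))$ into the closed form $\cos\bigl(2k\,\arccosh[\cosh(\beta/2)-z]\bigr)$, and then controls the integrals via $\arccosh$ asymptotics. Your route instead starts from the same moment formula but dispenses with the $\arccosh$ resummation: you bound the kernel pointwise by $\Gamma(l/2\pm ik_1\pm ik_2)\le\Gamma(l/2)^4$ (via $\lvert\Gamma(x+iy)\rvert\le\Gamma(x)$ after pairing the factors into $\lvert\Gamma(l/2+i(k_1\pm k_2))\rvert^2$) and verify, via Stirling, that $\sum_l 4^l\,\Gamma(l/2)^4/(l!\,\Gamma(l))$ converges (the $l$-th term is $O(l^{-2})$), which gives a $\sigma$-independent uniform bound on the inner sum $\Sigma(k_1,k_2)$; Tonelli and the strictly positive Gaussian weights then close the argument. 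This is more elementary than the paper's machinery and is exactly calibrated to the constant $8$, since $(8/\sigma^2)(\sigma^2/2)=4$ is the critical rate at which the Stirling comparison series just barely converges. The trade-off is that your uniform bound on $\Sigma$ throws away all $(k_1,k_2)$-dependence and could not be pushed to the $\sqrt{t-s}$-scaled estimate in Proposition~\ref{prpBoundExpMoments}, which is why the paper reserves the sharper $\arccosh$ identity for that; but for the bare finiteness claim here your argument is complete and sound.
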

In this work we give a sharper estimate for the exponential moments in the case when $s$ and $t$ are close; see Proposition \ref{prpBoundExpMoments}.
The main tool for estimating exponential moments is the following identity.
\begin{prp}\label{prpKeyArccoshExpansion}
For any $\beta, k \in \R$ and any $z\in \D_2$, we have
\begin{multline}
\cos\Big(2k\cdot \arccosh\left[\cosh(\beta/2)-z\right]\Big)
=\\
\cos(k\beta)+
2k\sinh(2\pi k)
\int_0^{\infty}
\sum_{l=1}^{\infty} \frac{\Gamma\Big(\frac{l}{2}\pm i k \pm i w\Big)}{2\pi^2 \Gamma(l)}\cdot \frac{(2 z)^l}{l!} %
\cos(w\beta)\d w,
\end{multline}
where the right-hand side converges absolutely.
\end{prp}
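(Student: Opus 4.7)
The plan is to verify the identity by showing that both sides define analytic functions of $z\in\D_2$ and then match their Taylor expansions at $z=0$. For joint analyticity and absolute convergence: by Statement \ref{stmArccoshDef}, $\arccosh^2$ is analytic on $\Dom$, and since $\cosh(\beta/2)\geq 1$ and $|z|<2$ force $\Real(\cosh(\beta/2)-z)>-1$, the LHS depends analytically on $z\in\D_2$ as an even function of $\arccosh(\cosh(\beta/2)-z)$. For the RHS, Stirling's asymptotic gives $|\Gamma(l/2+ik+iw)|\sim\sqrt{2\pi}|w|^{(l-1)/2}e^{-\pi|w|/2}$ as $|w|\to\infty$, so the four-fold product $\Gamma(l/2\pm ik\pm iw)$ decays like $|w|^{2l-2}e^{-2\pi|w|}$, giving absolute convergence of the $w$-integral; combined with $(2|z|)^l/l!$ and Stirling's bound for $\Gamma(l)$, the $l$-series converges absolutely for $|z|<2$.

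Next I would match Taylor coefficients. At $z=0$ the constant terms agree trivially: $\cos(2k\arccosh(\cosh(\beta/2)))=\cos(k\beta)$, and the entire $l$-sum on the RHS vanishes. Rather than computing higher derivatives of the LHS directly, I would use an ODE characterisation. The function $f(x):=\cos(2k\arccosh(x))$ satisfies the Chebyshev-type equation $(x^2-1)f''(x)+xf'(x)+4k^2f(x)=0$; substituting $x=\cosh(\beta/2)-z$, the LHS $L(z)$ satisfies
\begin{equation*}
\bigl(\sinh^2(\beta/2)-2\cosh(\beta/2)\,z+z^2\bigr)L''(z)-\bigl(\cosh(\beta/2)-z\bigr)L'(z)+4k^2L(z)=0,
\end{equation*}
with $L(0)=\cos(k\beta)$ and $L'(0)=2k\sin(k\beta)/\sinh(\beta/2)$. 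The task then reduces to verifying that the RHS $R(z)$ obeys the same ODE with the same initial data; uniqueness for analytic solutions of second-order linear ODEs (the leading coefficient $\sinh^2(\beta/2)$ is nonzero near $z=0$ for $\beta\neq 0$) then forces $L\equiv R$ on $\D_2$, the case $\beta=0$ following by continuity.

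The main obstacle is the ODE check for $R(z)$. Differentiation under the integral sign is justified by the absolute convergence above; powers of $z$ translate, via the Cauchy product with the $l$-series, into shifts of the index in the $\Gamma$-factors, handled by $\Gamma(s+1)=s\Gamma(s)$, while $\sinh^2(\beta/2)$ paired with $\cos(w\beta)$ is produced by the operator $-\partial_\beta^2$ bringing down a factor of $w^2$ in the integrand. The verification thereby reduces to an algebraic identity between polynomials in $w$ and $k$ arising from shifts of $\prod_{\pm}\prod_{\pm}(l/2\pm ik\pm iw)$. As a sanity check one verifies $R'(0)=L'(0)$ for $l=1$: using $\Gamma(1/2+ix)\Gamma(1/2-ix)=\pi/\cosh(\pi x)$ together with the residue evaluation
\begin{equation*}
\int_0^\infty \frac{\cos(w\beta)}{\cosh(\pi(k+w))\cosh(\pi(k-w))}\,\d w=\frac{\sin(k\beta)}{\sinh(2\pi k)\sinh(\beta/2)},
\end{equation*}
one obtains $R'(0)=2k\sin(k\beta)/\sinh(\beta/2)$, matching $L'(0)$. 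The bookkeeping for the general $l$ ODE check requires care but is a routine algebraic verification once the correct shift-identities between the $w^2$ insertion, the $\Gamma$-shifts from $l\mapsto l+1, l+2$, and the coefficients in $z$ and $\cosh(\beta/2)$ are organised.
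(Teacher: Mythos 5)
The paper itself does not prove Proposition~\ref{prpKeyArccoshExpansion}: it is stated in Section~\ref{sect_meas_construct} among the results \emph{recalled} from \citep{LosevCorr}, so there is no in-text proof to compare against. I will therefore assess your argument on its own.

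Your framework is sound: both sides are analytic on $\D_2$ (for the left-hand side, $\Real(\cosh(\beta/2)-z)>-1$ puts the argument in the domain of Statement~\ref{stmArccoshDef}; your Stirling estimates for the right-hand side are correct), the Chebyshev-type equation $(x^2-1)f''+xf'+4k^2f=0$ with $x=\cosh(\beta/2)-z$ is satisfied by the left-hand side, and your $l=1$ sanity check is correct: $\Gamma(\tfrac12\pm ik\pm iw)=\pi^2/\big[\cosh(\pi(k+w))\cosh(\pi(k-w))\big]$ together with the residue evaluation does give $R'(0)=2k\sin(k\beta)/\sinh(\beta/2)=L'(0)$. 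One also then needs the identity theorem to propagate equality past the regular singular point of the ODE at $z=\cosh(\beta/2)-1$ (which lies inside $\D_2$ for small $\beta$), but that is a minor patch.

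The genuine gap is in the central step, the claim that ``the verification thereby reduces to an algebraic identity between polynomials in $w$ and $k$ arising from shifts of $\prod_\pm\prod_\pm(l/2\pm ik\pm iw)$.'' Writing $R(z)=\sum_l a_lz^l$, the ODE gives the three-term recursion
\begin{equation*}
\sinh^2(\beta/2)\,(l+2)(l+1)\,a_{l+2}-\cosh(\beta/2)\,(l+1)(2l+1)\,a_{l+1}+(l^2+4k^2)\,a_l=0,
\end{equation*}
and the middle term involves $\Gamma\big(\tfrac{l+1}{2}\pm ik\pm iw\big)$, which differs from $\Gamma\big(\tfrac{l}{2}\pm ik\pm iw\big)$ by \emph{half-integer} shifts; these are not produced by $\Gamma(s+1)=s\Gamma(s)$ and do not reduce to polynomial prefactors. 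Your proposed mechanism ``$\sinh^2(\beta/2)$ paired with $\cos(w\beta)$ is produced by $-\partial_\beta^2$'' is simply not true: $-\partial_\beta^2\cos(w\beta)=w^2\cos(w\beta)$, which is not $\sinh^2(\beta/2)\cos(w\beta)$, and you never address the $\cosh(\beta/2)$ coefficient at all. What actually makes the recursion close is writing $\cosh(\beta/2)\cos(w\beta)=\tfrac12\cos((w+i/2)\beta)+\tfrac12\cos((w-i/2)\beta)$ (and similarly $\cosh\beta$ for $\sinh^2(\beta/2)=\tfrac12(\cosh\beta-1)$), deforming the $w$-contour by $\mp i/2$ (resp.\ $\mp i$) back to $\R_+$, and only \emph{then} collecting polynomial factors from $\Gamma(s+1)=s\Gamma(s)$: e.g.\ $\cosh(\beta/2)I_{l+1}=\tfrac{l^2+4k^2}{4l}I_l-\tfrac{1}{l}J_l$ with $J_l$ the $I_l$-integral weighted by $w^2$. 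This in turn requires checking that the deformed contour does not cross poles of the $\Gamma$-factors (true for $l\geq 1$ since the nearest poles have $|\Im w|\geq l/2$) and that the integrand decays on the strip (Stirling). None of this is ``routine bookkeeping'' of the type you described, and as written your outline mis-identifies the mechanism, which is the step that does essentially all the work.
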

\begin{rmrk}
The right-hand side above can also be written as 
\begin{equation}
2k\sinh(2\pi k)
\int_0^{\infty}
\sum_{l=0}^{\infty} \frac{\Gamma\Big(\frac{l}{2}\pm i k \pm i w\Big)}{2\pi^2 \Gamma(l)}\cdot \frac{(2 z)^l}{l!} %
\cos(w\beta)\d w,
\end{equation}
if we interpret $l=0$ term as delta function $\delta(\omega-k)$.
\end{rmrk}

\section{Action functional $\I(\cdot)$}
\label{sectProofThrActionMin}
In this section we prove statements concerning the action functional $\I(\cdot)$. 
That is, here we show that it is $\SL(2, \R)$ invariant (Proposition~\ref{prpActionSLInvariance}) and find its global minimisers (Theorem~\ref{thrActionMin}).

\begin{proof}[Proof of Proposition \ref{prpActionSLInvariance}]
For $\phi\in \Diff^{\infty}(\T)$ we have 
\begin{equation}
\I(\phi) 
= \int_{0}^{1} 
\Big[-\Schw \left(\phi, \tau\right) -2\pi^2\phi'^{\, 2}(\tau)\Big] \d\tau
= -\int_{0}^{1} 
\Schw \big(\tan(\pi\, \phi), \tau\big)\d\tau.
\end{equation}
Also, for any $\psi\in \SL(2, \R)$ and any $x\in \R$,
\begin{equation}\label{eqActionSLInvFormulaSLUpperHalfPlane}
\tan\left(\pi \, \psi\left(\tfrac{1}{\pi}\arctan(x)\right)\right) = \frac{ax+b}{cx+d},\qquad 
\text{for some } a, b, c, d\in \R, \text{ with } ad-bc = 1.
\end{equation}
This follows from the fact that the map $\phi\in \T\mapsto \tan(\pi\phi)$ is the restriction to the boundary of the conformal map $z\mapsto i\frac{1+z}{1-z}$ from the unit disk in $\Compl$ to the upper half plane, whose group of conformal isomorphisms coincides with the group of fractional linear transformations.
It is well known that the Schwarzian derivative $\Schw$ is invariant under transformations of the form \eqref{eqActionSLInvFormulaSLUpperHalfPlane}. Therefore, by Remark \ref{rmrk_good_rate_for_smooth}, for any $\phi \in \Diff^{\infty}(\T)$,
\begin{equation}
\I(\psi\circ \phi) = \I(\phi).
\end{equation}
It is easy to see that  $\Diff^{\infty}(\T)$ is dense in $\LogSob(\T)$ and that the map $\phi\mapsto \psi\circ \phi$ is continuous as a map $\LogSob(\T)\to\LogSob(\T)$. 
Thus, using continuity of $\I(\cdot)$ on $\LogSob(\T)$, we deduce that $\I(\phi)$ is $\SL(2, \R)$ invariant.
\end{proof}

\begin{proof}[Proof of Theorem \ref{thrActionMin}]
First, we prove the inequality.

Observe that it is sufficient to prove the inequality for $\Diff^{\infty}(\T)$, since $\Diff^{\infty}(\T)$ is dense in $\LogSob(\T)$.

Notice that
\begin{equation}\label{eqLemmaActionBoundActionFormula}
\frac{1}{2}\int_{0}^{1} \left[\left(\log \phi'(\tau)\right)'^{\,2}-4\pi^2\phi'^{\, 2}(\tau)\right] \d\tau 
=
-\int_{0}^{1} \Big[\Schw\big(\phi, \tau\big) + 2\pi^2\phi'^{\, 2}(\tau)\Big]\d\tau.
\end{equation}
Using the fact that, for any $b\in \R$,  $\Schw\big(\cot\big(\pi(\tau-b)\big), \tau\big) = 2\pi^2$ and the well-known composition rule
\begin{equation}
\Schw_{f\circ g} = \big(\Schw_f \circ g \big)\cdot (g')^2 + \Schw_g, \qquad \forall f, g \in C^3,
\end{equation}
it is easy to see that, for any $a\in \T$,
\begin{equation}\label{eqLemmaActionBoundSchDerTan}
\Schw(\phi, \tau) + 2\pi^2\phi'^{\, 2}(\tau)= 
\Schw \Big(-\cot\Big(\pi\big[ \phi(\tau)-\phi(a)\big]\Big), \tau\Big).
\end{equation}
Denote
\begin{align}
f_a(t) &= -\cot\Big(\pi\big[\phi(t+a)-\phi(a)\big]\Big);\\
q(\tau) &=  -\frac{1}{2}\Big(\Schw(\phi, \tau ) + 2\pi^2\phi'^{\, 2}(\tau)\Big).\label{eqLemmaActionBoundSchNotation}
\end{align}
Note that $f'_a(t)>0$.
It follows from \eqref{eqLemmaActionBoundSchDerTan} that
\begin{equation}
q(t+a) = -\frac{1}{2}\Schw (f_a, t).
\end{equation}
It is easy to see that if 
$v(t) = f_a'(t)^{-1/2}$,
then
\begin{equation}
v''(t)-q(a+t)v(t) = 0.
\end{equation}
Observe that $v(\tau)$ is a solution for the Sturm--Liouville eigenvector problem
\begin{equation}\label{eqLemmaActionBoundSturmLioville}
-F''(t)+q(t+a)F(t) = \lambda F(t), \qquad F(0)=F(1)=0,
\end{equation}
with $\lambda=0$. Moreover, since $v(t)\neq 0$ for $t\in(0, 1)$, we deduce that $v$ is the lowest eigenvector for the Sturm--Liouville eigenvector problem \eqref{eqLemmaActionBoundSturmLioville} (see, e.g., \citep[Chapter XI, Theorem 4.1]{ODEHartman}). 
Therefore, by the Rayleigh principle (see, e.g., \citep[Chapter VII]{bookSturmLiouville}), 
\begin{equation}
\inf_{\substack{u\in C^{\infty}, u\not\equiv 0\\
u(0)=u(1)=0}} \frac{\int_0^1 \left(u'^{\, 2}(t)+q(t+a)u^2(t)\right)\d t}{\int_0^1 u^2(t)\d t} =0.
\end{equation}
In particular, for $u(t) = \sin(\pi t)$ (which is a solution to \eqref{eqLemmaActionBoundSturmLioville} with $q=-\pi^2$ and $\lambda=0$), we obtain
\begin{equation}\label{eqLemmaActionBoundWithA}
\frac{\pi^2}{2} + \int_0^1 q(t+a)\sin^2(\pi t)\d t\geq 0.
\end{equation}
Integrating over $a\in[0, 1)$, we get
\begin{equation}\label{eqLemmaActionBoundFinalBound}
\int_0^1 q(t)\geq -\pi^2.
\end{equation}
It follows from \eqref{eqLemmaActionBoundActionFormula}, \eqref{eqLemmaActionBoundSchNotation}, and \eqref{eqLemmaActionBoundFinalBound} that the inequality \eqref{eqLemmaActionBoundResult} holds.

\medskip

Second, we prove that the equality holds only for M\"{o}bius transformations. 
Let $\phi\in\LogSob(\T)$ be such that $\I(\phi) = -2\pi^2$.
We start by showing that $\phi\in \Diff^{\infty}(\T)$.
Denote $f(t) = \log \phi'(t) - \log \phi'(0) \in C_0[0, 1]\cap \Sob[0,1]$. 
We get that
\begin{equation}
\I(\phi) = \frac{1}{2}\int_0^1 \left[f'^{\, 2}(\tau) - 4\pi^2\frac{e^{2 f(\tau)}}{\int_0^1 e^{f(t)}\d t}\right]\d \tau.
\end{equation}
The right-hand side is invariant under additions of constants to $f$, so we deduce that it obtains its maximum over $\Sob(\T)$ at $f$. 
Therefore, its functional derivative is equal to $0$.
Thus, for any $g\in C^{\infty}(\T)$,
\begin{equation}
\int_0^1f'(\tau)g'(\tau)\d\tau 
= 4\pi^2 \frac{\int_0^1 e^{2f(\tau)}g(\tau)\d\tau}{\int_0^1 e^{f(\tau)}\d\tau} 
- 2\pi^2 \left(\int_0^1 e^{f(\tau)}g(\tau)\d\tau \right)\cdot
\frac{\int_0^1 e^{2f(\tau)}\d\tau }{\left(\int_0^1 e^{f(\tau)}\d\tau\right)^2}.
\end{equation}
So, we get that $f''$ exists as a weak derivative, and
\begin{equation}
f''(t) =- 4\pi^2  \frac{e^{2f(t)}}{\int_0^1 e^{f(\tau)}\d\tau}
+ 2\pi^2\, e^{f(t)}
\frac{\int_0^1 e^{2f(\tau)}\d\tau }{\left(\int_0^1 e^{f(\tau)}\d\tau\right)^2}.
\end{equation}
Since the right-hand side is a continuous function, we deduce that $f$ is twice continuously differentiable. 
Differentiating the equation above, we get that $f\in C^{\infty}(\T)$.
Therefore, $\phi\in \Diff^{\infty}(\T)$.

Now, since $\phi\in \Diff^{\infty}(\T)$, we can repeat the argument from the first part of the proof.
The equality in \eqref{eqLemmaActionBoundFinalBound} is obtained
only if for almost all $a\in \T$,  $\sin(\pi t)$ is an eigenfunction of \eqref{eqLemmaActionBoundSturmLioville} with $\lambda = 0$.
Thus, for all $a\in \T$, there exists $C_a\neq 0$ such that $v_a(t) = C_a\sin(\pi t)$, which means that, for some  $C'_a$, 
\begin{equation}
f_a(t) = C_a \cot \big(\pi \,t\big)+C'_a,
\end{equation}
which holds only when $\phi$ is a M\"{o}bius transformations of the unit disc.  
\end{proof}

\section{Proof of Theorem~\ref{thrHolderEquiv}}
\label{sectProofThrHolderEquiv}
For reader's convenience we recall the statement of Theorem~\ref{thrHolderEquiv}.
\begin{customthm}{\ref{thrHolderEquiv}}
Fix $\alpha \in (0,1)$. For any $C>0$, there exists $C'>0$ such that if $\phi \in \Diff^1(\T)$ and $\forall s, t \in \T: |\log \phi'(t)- \log \phi'(s)|<C \, \dist{s}{t}^{\alpha}$, then $\phi\in \MyHol{\alpha}{C'}$.
Furthermore, $C'$ can be chosen so that $C'\to 0$ if $C\to 0$.

A converse is also true. Fix $\eps >0$. Then for any $C'>0$, there exists $C''>0$ such that if $\phi\in \DiffG^1(\T)$ and
\begin{equation}\label{eqHolderSLThrConverse2}
\left|\obs{\phi}{s}{t}
- \frac{\pi}{\sin \big(\pi [t-s]\big)}\right|\leq  C' \, \dist{s}{t}^{\alpha-1}
\end{equation}
for all $s, t\in \T$ with $\dist{s}{t}<\eps$,
then
$\forall s, t \in \T$ we have $|\log \phi'(t)-\log \phi'(s)|<C'' \dist{s}{t}^{\alpha}$.
Moreover, if $\eps >1$ (i.e., \eqref{eqHolderSLThrConverse2} holds for all $s,t\in \T$) then $C''$ can be chosen so that $C''\to 0$ if $C'\to 0$.
\end{customthm}

\begin{proof}
\textbf{Direct statement.} 

Notice that because of rotational invariance it is sufficient to prove \eqref{eqHolderSL} only when $t-s\leq 1/2$.
In this proof $C_1, C_2, \ldots$ denote constants that depend only on $C$ (in particular, they do not depend on $\phi$), and they all go to $0$ as $C\to 0$ (this can be verified step by step: for any $j$ if $C_j\to 0$, then $C_{j+1}\to 0$).
Observe that
\begin{equation}\label{eqThrHolderEquivDirectPhiRatioEst}
\left|\frac{\phi'(t)}{\phi'(s)}-1\right|
\leq C_1 
\,\dist{s}{t}^{\alpha}
\end{equation} 
and, in particular, 
\begin{equation}\label{eqThrHolderDirectPhiDerivAbsBound}
\forall \tau\in \T: (1+C_2)^{-1}<\phi'(\tau)<1+C_2.
\end{equation}
Integrating the inequality above, we obtain
\begin{equation} \label{eqThrHolderEquivDirectPhiEst}
\left|\phi(t)-\phi(s) - (t-s)\phi'(s)\right| \leq C_3 
\,\dist{s}{t}^{\alpha+1}
\end{equation}
Therefore, it is not hard to show that
\begin{equation}\label{eqThrHolderEquivDirectSinSqEst}
\left|\sin^2\Big(\pi \big[\phi(t) - \phi(s)\big] \Big) - \phi'^{\, 2}(s) \sin^2\big(\pi [t- s]\big)\right|
\leq C_4 
\,\dist{s}{t}^{\alpha+2}
\end{equation}
Indeed, from \eqref{eqThrHolderEquivDirectPhiEst}
\begin{equation}
\left|\sin\Big(\pi \big[\phi(t) - \phi(s) \big]\Big) -  \sin\Big(\phi'(s) \, \pi [t-s] \Big)\right| \leq  C_5 
\,\dist{s}{t}^{\alpha+1}
\end{equation}
and from \eqref{eqThrHolderDirectPhiDerivAbsBound} and Statement \ref{stmAppendixIneqSin}, we get
\begin{equation}
\left|\sin\Big(\phi'(s)\, \pi [t- s]\Big) - \phi'(s) \sin\big(\pi [t- s]\big) \right| \leq C_6 
\,\dist{s}{t}^{2}
\end{equation}
which implies \eqref{eqThrHolderEquivDirectSinSqEst}.

Also, from \eqref{eqThrHolderEquivDirectPhiRatioEst} and \eqref{eqThrHolderDirectPhiDerivAbsBound}, we get
\begin{equation}\label{eqThrHolderEquivDirectPhiProdEst}
\left|\phi'(t)\phi'(s) - \phi'^{\, 2}(s)\right| \leq C_7 
\,\dist{s}{t}^{\alpha}
\end{equation}

Combining \eqref{eqThrHolderEquivDirectSinSqEst} and \eqref{eqThrHolderEquivDirectPhiProdEst} we deduce that if $\dist{s}{t}<\frac{1}{100} (1+C_2)^{-2/\alpha}\, C_4^{-1/\alpha}$, then
\begin{equation}
\left|
\frac{\phi'(s)\phi'(t)}{\sin^2\Big(\pi \big[\phi(t)-\phi(s)\big]\Big)} 
- \frac{1}{\sin^2 \big(\pi [t-s]\big)}\right|\leq  C_8 
\,\dist{s}{t}^{\alpha-2}
\end{equation}

If $C$ is small enough, we get that $\frac{1}{100} (1+C_2)^{-2/\alpha}\, C_4^{-1/\alpha}>1$, and thus, the inequality above holds for all $s, t\in \T$.

Otherwise, it follows from \eqref{eqThrHolderEquivDirectSinSqEst} and monotonicity of $\phi$ that $C_9^{-1}<\sin^2\Big(\pi \big[\phi(t)-\phi(s)\big]\Big)<C_9$ whenever $\dist{s}{t}\geq \frac{1}{1000} (1+C_2)^{-2/\alpha}\, C_4^{-1/\alpha}$.
Therefore, by compactness argument and \eqref{eqThrHolderDirectPhiDerivAbsBound} we get that
\begin{equation}
\left|
\frac{\phi'(s)\phi'(t)}{\sin^2\Big(\pi \big[\phi(t)-\phi(s)\big]\Big)} 
- \frac{1}{\sin^2 \big(\pi [t-s]\big)}\right|\leq  C_{10} 
\,\dist{s}{t}^{\alpha-2}
\end{equation}
for all $s, t\in \T$ with $\dist{s}{t}\geq \frac{1}{1000} (1+C_2)^{-2/\alpha}\, C_4^{-1/\alpha}$, which finishes the proof.
\medskip

\textbf{Converse statement.}

We let $\delta \in (0, 1/100)$ be a sufficiently small number to be chosen later.
We consider two cases: in the first case $\eps>1$ and $C'<\delta$, in the second case either $\eps \leq 1$ or $C'\geq\delta$.
In this proof $C'_1, C'_2, \ldots$ denote constants, that depend only on $C'$. 
Moreover, in the first case, they all go to $0$ as $C'\to 0$.

\medskip

\textbf{Case 1: } $\eps>1$ and $C'<\delta$.

For every $s\in \T$ we let $\Psi_s \in \SL(2, \R)$ be a M\"{o}bius transformation of a unit disc such that 
\begin{equation}\label{eqThrHolderConverseCaseOneGauge}
\Psi_s\circ \phi (s) = 0, \qquad \big(\Psi_s\circ \phi\big)' (s) = 1, \qquad \Psi_s\circ \phi \left(s+\tfrac{1}{2}\right) = \tfrac{1}{2}.
\end{equation} 

We start by proving H\"{o}lder estimate for $\Psi_s\circ \phi$ when one of the points in question is equal to $s$. 
That is, we are going to prove that for all $s, t\in \T$,
\begin{equation}\label{eqThrHolderConverseCaseOneGaugeHolder}
\Big|\log \big[ \big(\Psi_s\circ \phi\big)'(t)\big] -  \log \big[ \big(\Psi_s\circ \phi\big)'(s)\big] \Big| \leq C'_1   \,\dist{s}{t}^{\alpha}
\end{equation}

Notice, that by symmetry it is sufficient to prove \eqref{eqThrHolderConverseCaseOneGaugeHolder} for $t\in [s, s+\frac{1}{2}]$.
Using \eqref{eqHolderSLThrConverse2} and \eqref{eqThrHolderConverseCaseOneGauge}, we get that for all $t\in[s, s+\frac{1}{2}]$
\begin{equation}\label{eqThrHolderConverseCaseOneGaugeIneq}
\left|\frac{\big(\Psi_s\circ \phi\big)'(t)}{\sin^2\Big(\pi \, \Psi_s\circ \phi(t)\Big)} -\frac{1}{\sin^2\big(\pi [t- s]\big)}\right| 
\leq C' 
\dist{s}{t}^{\alpha-2}.
\end{equation}
Denote $F_s(\tau) = \tan\big( \pi \, \Psi_s\circ \phi (s+\tau)\big)$. 
Then for all $\tau \in(0, 1/2)$, we have
\begin{equation}\label{eqThrHolderConverseCaseOneDiffBound}
\left| \frac{F'_s(\tau)}{F_s^2(\tau)}-\frac{\pi}{\sin^2(\pi \tau)}\right|\leq C_2'\tau^{\alpha-2}.
\end{equation}
Integrating this from $x$ to $\frac{1}{2}$, we obtain
\begin{equation}
\left|\frac{1}{F_s(x)} - \cot(\pi x)\right| \leq C'_3 x^{\alpha-1}, \qquad \text{for } x\in (0, 1/2) .
\end{equation}
Therefore,
\begin{equation}
\left| \frac{1}{F_s^2(x)} - \cot^2(\pi x)\right|  \leq C'_4 x^{\alpha-2}.
\end{equation}
Notice that
\begin{equation}
\frac{1}{\sin^2\Big(\pi\, \Psi_s\circ \phi (s+\tau)\Big)} = 1+\frac{1}{F_s^2(\tau)}.
\end{equation}
Hence,
\begin{equation}
\left|\frac{1}{\sin^2\Big(\pi\, \Psi_s\circ \phi (s+\tau)\Big)} -\frac{1}{\sin^2(\pi \tau)}\right| \leq C_5' \tau^{\alpha-2}.
\end{equation}
For sufficiently small $\delta$, we have $C'_5<1/100$, and hence,
\begin{equation}
\sin^2\Big(\pi\, \Psi_s\circ \phi (s+\tau)\Big) \leq C'_6\tau^2
\end{equation}
and
\begin{equation}
\left|1- \frac{\sin^2\Big(\pi\, \Psi_s\circ \phi (s+\tau)\Big)}{\sin^2(\pi \tau)}\right| \leq C_7' \tau^{\alpha}.
\end{equation}
Combining this with \eqref{eqThrHolderConverseCaseOneGaugeIneq}, we get
\begin{equation}
\left|\Big(\Psi_s\circ \phi\Big)'(s+\tau) - 1\right|\leq C_8' \tau^{\alpha},
\end{equation}
which implies \eqref{eqThrHolderConverseCaseOneGaugeHolder}.

The inequality above also implies that, for all $0\leq u\leq v\leq 1$,
\begin{equation}\label{eqThrHolderConverseCaseOneGaugeFunctionEst}
\left|\Psi_s \circ \phi(v)-\Psi_s \circ \phi(u)-(v-u)\right|\leq C'_9|v-u|.
\end{equation}
In particular,
\begin{align}
\left| \Psi_s \circ \phi \left(\tfrac{1}{3}\right) - \Psi_s\circ \phi\left(0\right)  - \tfrac{1}{3} \right| &\leq C'_{10} ,\\
\left|\Psi_s\circ \phi\left(\tfrac{2}{3}\right) -  \Psi_s\circ \phi\left(\tfrac{1}{3}\right)  - \tfrac{1}{3}\right| &\leq C'_{10}, \\
\left|\Psi_s\circ \phi\left(1\right) - \Psi_s\circ \phi\left(\tfrac{2}{3}\right)  - \tfrac{1}{3}\right| &\leq C'_{10}.
\end{align}
Using Statement \ref{stmAppendixSLDerivBounds}, we deduce that, for all $u, v\in [0,1]$,
\begin{equation}\label{eqThrHolderConverseCaseOneInverseContinuity}
\left|\log \left[\Psi^{(-1)}_s\right]'(u)-\log \left[\Psi^{(-1)}_s\right]'(v)\right|
\leq C'_{11} \dist{u}{v}
\end{equation}
where $\Psi^{(-1)}_s$ is the inverse to $\Psi_s$.
Writing
\begin{equation}
\log \phi'(t) = \log \Big(\left[\Psi^{(-1)}_s\right]'\big(\Psi_s\circ \phi(t)\big)\Big) +\log \Big( \big(\Psi_s\circ \phi\big)'(t)\Big),
\end{equation}
and combining this with \eqref{eqThrHolderConverseCaseOneGaugeHolder}, \eqref{eqThrHolderConverseCaseOneGaugeFunctionEst}, and \eqref{eqThrHolderConverseCaseOneInverseContinuity}, we obtain the desired.

\medskip

\textbf{Case 2: } Either $\eps<1$ or $C'\geq \delta$.

This Case is similar to the Case 1.  Essentially, we cut the whole circle into many small subintervals, and then we apply an argument from Case 1 to each subinterval, proving a version of "local" H\"{o}lder estimate. 
Then we show that the global estimates follow from the local estimates.

First, we divide the circle into $N$ equal intervals with $N =10 (1+C')^{ 1/\alpha}\, \eps ^{-1/{\alpha}}$. Let $0=s_0<s_1<\ldots<s_N=1$ be their endpoints (so that $\forall i:\, s_{i+1}-s_i = 1/N$). In what follows we always assume that indices are elements of $\Z/(N\Z)$ (i.e., for indices $i: N+i = i$).

Notice that if for some $k$ we have $s,t\in [s_{k-2}, s_{k+2}]$, then 
from \eqref{eqHolderSLThrConverse2} we get
\begin{equation}\label{eqThrHolderObsConstBound}
\frac{1}{2\sin\big(\pi[t-s]\big)}< \frac{\phi'(s)\phi'(t)}{\sin^2\Big(\pi \big[\phi(t)-\phi(s)\big]\Big)}<\frac{3}{2\sin\big(\pi[t-s]\big)}.
\end{equation}

Now, we consider a set of different gauge fixings. Let $\left\{\psi_k(t)\right\}_{k=0}^{N-1}$ be M\"{o}bius transformations such that, for all $k$,
\begin{equation}\label{eqThrHolderEquivConverseGaugeFixingSmallIntervals}
\psi_k\big(\phi(s_i)\big) = s_i, \qquad \text{for } i\in\{k-1, k, k+1\}.
\end{equation}

We proceed in several steps:

\medskip

\textbf{Step 1.} We show that $\forall k<N$ and $t\in [s_{k-2}, s_{k+2}]$; we have 
\begin{equation}
\frac{1}{C'_1}<\big(\psi_k \circ \phi\big)'(t)<C'_1.
\end{equation}

Indeed, from \eqref{eqThrHolderObsConstBound} and \eqref{eqThrHolderEquivConverseGaugeFixingSmallIntervals}, it follows that 
\begin{align}
\frac{1}{C'_2}<\Big(\psi_k \circ \phi\Big)'(s_{k-1})&\Big(\psi_k\circ \phi\Big)'(s_{k})<C'_2,\\ 
\frac{1}{C'_2}<\Big(\psi_k \circ \phi\Big)'(s_{k-1})&\Big(\psi_k\circ \phi\Big)'(s_{k+1})<C'_2,\\
\frac{1}{C'_2}<\Big(\psi_k \circ \phi\Big)'(s_{k})&\Big(\psi_k\circ \phi\Big)'(s_{k+1})<C'_2.
\end{align}
Therefore,
\begin{equation} \label{eqThrHolderEquivConverseCaseTwoDerivBound}
\frac{1}{C'_3}<\big(\psi_k \circ \phi\big)'(s_{j})<C'_3, \qquad \text{for } j\in\{k-1, k, k+1\}.
\end{equation}

Observe that for any $t \in [s_{k-1}, s_{k}]$ we can set $s = s_{k+1}$ in \eqref{eqThrHolderObsConstBound} and using \eqref{eqThrHolderEquivConverseCaseTwoDerivBound} get that
\begin{equation}
\frac{1}{C'_4} < \big(\psi_k\circ \phi\big)' (t) <C'_4.
\end{equation}
Similarly, we obtain the same estimate for $t\in  [s_{k}, s_{k+1}]$ if we take $s = s_{k-1}$, and for $t\in  [s_{k-2}, s_{k-1}]\cup [s_{k+1}, s_{k+2}]$ if we take $s = s_k$. This concludes Step 1.

\medskip

\textbf{Step 2.}
We show that all diffeomorphisms $\left\{\psi_k\circ \phi\right\}_{k=0}^{N-1}$ are not too far from $\phi$. That is, there exists a compact $K_1 = K_1(C')\subset \SL(2,\R)$, depending only on $C'$ such that $\forall k: \psi_k \in K_1$.
In particular, for all $t\in \T$, we have
\begin{equation}\label{eqTheHolderConverseCaseTwoDerivUnifBound}
\frac{1}{C'_{5}}<\phi'(t)<C'_{5}.
\end{equation}
Indeed, notice that \eqref{eqTheHolderConverseCaseTwoDerivUnifBound} follows from the existence of $K_1$ and Step 1. 
Now, we prove the existence of such $K_1$. 

From Step 1 it follows that, for any $k$,
\begin{equation}
\frac{1}{C'_{6}}|s_{k+2}-s_{k+1}|<\Big|\psi_k \circ \phi(s_{k+2})- \psi_k \circ \phi(s_{k+1})\Big|<C'_{6}|s_{k+2}-s_{k+1}|.
\end{equation}
Thus, there exists a compact $K_2 = K_2(C')\subset \SL(2,\R)$ such that, for all $k$, $\psi_{k+1}\circ \psi_{k}^{-1} \in K_2$. Combining this with Step 1 again, we obtain that, for all $k$ and all $t\in \T$,
\begin{equation}
\frac{1}{C'_{7}}<\big(\psi_k \circ \phi\big)' (t)<C'_{7}.
\end{equation}
Therefore, for all $k$ we have
\begin{align}
\frac{1}{C'_{8}}<\psi_k \circ \phi\left(\tfrac{1}{3}\right) &- \psi_k \circ \phi\left(0\right)<C'_{8}\\
\frac{1}{C'_{8}}<\psi_k \circ \phi\left(\tfrac{2}{3}\right) &- \psi_k \circ \phi\left(\tfrac{1}{3}\right)<C'_{8}\\
\frac{1}{C'_{8}}<\psi_k \circ \phi\left(1\right) &- \psi_k \circ \phi\left(\tfrac{2}{3}\right)<C'_{8},
\end{align}
which concludes this step.

\medskip

\textbf{Step 3.}
For each $s\in \T$, we let $\Psi_s$ be M\"{o}bius transformation of the unit circle such that 
\begin{equation}
\Psi_s\circ \phi (s) = 0, \qquad \big(\Psi_s\circ \phi\big)' (s) = 1, \qquad \Psi_s\circ \phi \left(s+\frac{1}{N}\right) = \frac{1}{N}.
\end{equation} 
We show that there exists a compact $K_3 = K_3(C')\subset \SL(2, \R)$ such that $\Psi_s \in K_3$. 
In particular, for all $s, t\in \T$, we have
\begin{equation}\label{eqThrHolderConverseCaseTwoDerivUnifBoundGauge}
\frac{1}{C'_{9}}<\big(\Psi_s \circ \phi\big)'(t)<C'_9.
\end{equation}
 
Indeed, fix $s\in \T$.
Let $k$ be such that $s\in[s_{k-1}, s_{k})$.
From Step 2 it is sufficient to show that there exists a compact $K_4(C') \subset \SL(2,\R)$ such that $\Psi_s \circ\psi_k^{-1}\in K_4$. This follows from Step 1. And \eqref{eqThrHolderConverseCaseTwoDerivUnifBoundGauge} now follows from \eqref{eqTheHolderConverseCaseTwoDerivUnifBound}.

\medskip

\textbf{Step 4.}
Now, we show that there exists $C'_{10}$ and $\delta' = \delta'(C')>0$ such that, for all $s\in \T$ and $\forall t \in [s,s+\delta']$,
\begin{equation}\label{eqTheHolderConverseCaseTwoDerivHolderGauge}
\Big|\log \big[ \big(\Psi_s\circ \phi\big)'(t)\big]-\log \big[ \big(\Psi_s\circ \phi\big)'(s)\big]\Big| \leq C'_{10} 
\,\dist{s}{t}^{\alpha}.
\end{equation} 

It follows from \eqref{eqHolderSLThrConverse2} that $\forall t\in [s, s+\frac{1}{N}]$; we have
\begin{equation}
\left|\frac{\big(\Psi_s\circ \phi\big)'(t)}{\sin^2\big(\pi\, \Psi_s\circ \phi(t)\big)} -\frac{1}{\sin^2\big(\pi[ t-s]\big)}\right| \leq C' 
\,\dist{s}{t}^{\alpha-2}.
\end{equation}
Denote $F_s(\tau) = \tan\Big(\pi\, \Psi_s\circ \phi(s+ \tau)\Big)$. 
Then it follows from the inequality above that, for all $\tau \in [0,1/N]$,
\begin{equation}\label{eqThrHolderConverseCaseTwoDiffBound}
\left|\frac{F'_s(\tau)}{F^2_s(\tau)}-\frac{\pi}{\sin^2(\pi \tau)}\right| \leq C'_{11} \tau^{\alpha-2}. 
\end{equation}
Integrating this from $x$ to $1/N$, we obtain
\begin{equation}
\left| \frac{1}{F_s(x)} - \cot(\pi x)\right| \leq C_{12}' x^{\alpha-1},
\end{equation}
for any $x\in (0, 1/N)$.

Notice that if $\delta'$ is sufficiently small, then for any $x\in (0, \delta')$ we get $x^{\alpha}C'_{12}<1/100$, and thus,
\begin{equation}\label{eqThrHolderConverseCaseTwoFBound}
F_s(x)<C'_{13} x,
\end{equation}
so
\begin{equation}
\left| 1 - F_s(x)\cot(\pi x)\right| \leq C_{14}' x^{\alpha}.
\end{equation}
Therefore, combining this with \eqref{eqThrHolderConverseCaseTwoFBound} again,
\begin{equation}
\left| 1 - \frac{F_s(x)}{\sin(\pi x)} \right| \leq C_{15}' x^{\alpha}
\end{equation}
for all $x\in (0, \delta')$.
Combining this with \eqref{eqThrHolderConverseCaseTwoDiffBound} and \eqref{eqThrHolderConverseCaseTwoFBound}, we obtain
\begin{equation}
\left| F'_s(\tau) - \pi\right| \leq C'_{16}\tau^{\alpha}
\end{equation}
for all $\tau \in [0, \delta']$.
Therefore, using \eqref{eqThrHolderConverseCaseTwoDerivUnifBoundGauge} for $\tau \in [0 ,\delta']$, we get
\begin{equation}
\left|\log F'_s(\tau) - \log F'_s(0)\right|\leq C'_{17}\tau^{\alpha},
\end{equation}
which together with \eqref{eqThrHolderConverseCaseTwoDerivUnifBoundGauge} implies \eqref{eqTheHolderConverseCaseTwoDerivHolderGauge}.

\medskip

\textbf{Step 5.} Combination of \eqref{eqTheHolderConverseCaseTwoDerivHolderGauge} with Step 3 finishes the proof of the theorem.
\end{proof}

\section{Weak Large Deviations Principle}\label{sectWeakLDP}

First, we show that $\I(\cdot)$ is a rate function.

\begin{prp}\label{prpActionIsRateFunction}
Functional $\I(\cdot)$ is a rate function. That is, $\I(\cdot)$ is lower semicontinuous on $\Diff^1(\T)/\SL(2, \R)$.
In other words, for any $\alpha \in \R$, the level set 
\begin{equation}
\Psi(\alpha) = \left\{\phi\in \Diff(\T)/\SL(2, \R): \I(\phi)\leq \alpha \right\}
\end{equation} is closed.
\end{prp}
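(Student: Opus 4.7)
The plan is to prove closedness of $\Psi(\alpha)$ by a standard weak compactness argument in $H^1(\T)$ after fixing the gauge. Suppose $\phi_n \in \Psi(\alpha)$ converges to some $\phi$ in $\Diff^1(\T)/\SL(2,\R)$. By Proposition \ref{prpActionSLInvariance}, the action $\I(\cdot)$ is $\SL(2,\R)$-invariant, so we may replace each $\phi_n$ (resp.\ $\phi$) by its unique representative $\tilde\phi_n$ (resp.\ $\tilde\phi$) in the gauge-fixed space $\DiffG^1(\T)$ from \eqref{eq_gauge_fix_def}. Since $\DiffG^1(\T)$ is topologically isomorphic to $\Diff^1(\T)/\SL(2,\R)$, the convergence $\phi_n\to\phi$ lifts to $\tilde\phi_n\to\tilde\phi$ in $C^1(\T)$. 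In particular $\tilde\phi_n'\to\tilde\phi'$ uniformly, with $\tilde\phi_n'$ bounded away from $0$ and $\infty$ uniformly in $n$, and therefore $\log\tilde\phi_n'\to\log\tilde\phi'$ uniformly on $\T$.

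From $\I(\tilde\phi_n) \leq \alpha$ and the definition of $\I(\cdot)$ on $\LogSob(\T)$, rearranging gives
\begin{equation}
\bigl\|(\log\tilde\phi_n')'\bigr\|_{L^2(\T)}^2 \;=\; 2\I(\tilde\phi_n) + 4\pi^2 \|\tilde\phi_n'\|_{L^2(\T)}^2 \;\leq\; 2\alpha + C,
\end{equation}
for a constant $C$ independent of $n$, thanks to the uniform bound on $\tilde\phi_n'$ from the previous paragraph. Combined with the $L^\infty$ (hence $L^2$) bound on $\log\tilde\phi_n'$, this shows that $\{\log\tilde\phi_n'\}_n$ is bounded in $H^1(\T)$.

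By reflexivity of $H^1(\T)$, some subsequence of $\log\tilde\phi_n'$ converges weakly in $H^1(\T)$ to a limit $f$. Weak $H^1$-convergence yields $L^2$-convergence and hence a.e.\ convergence along a further subsequence, so $f=\log\tilde\phi'$ by uniqueness of the $C^0$-limit. In particular $\log\tilde\phi' \in H^1(\T)$, meaning $\tilde\phi \in \LogSob(\T)$. Lower semicontinuity of the $L^2$-norm under weak convergence then gives $\|(\log\tilde\phi')'\|_{L^2}^2 \leq \liminf_n \|(\log\tilde\phi_n')'\|_{L^2}^2$, while uniform convergence yields $\|\tilde\phi'\|_{L^2}^2 = \lim_n \|\tilde\phi_n'\|_{L^2}^2$. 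Hence $\I(\tilde\phi) \leq \liminf_n \I(\tilde\phi_n) \leq \alpha$. The usual subsubsequence principle upgrades this to the full sequence, so $\phi\in\Psi(\alpha)$.

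The main obstacle is precisely the mismatch between the $C^1$-topology on $\Diff^1(\T)/\SL(2,\R)$ in which closedness is to be verified, and the $H^1$-regularity built into $\LogSob(\T)$ that defines $\I(\cdot)$; this is overcome by combining the gauge fixing (which removes the $\SL(2,\R)$-indeterminacy so that Sobolev norms make sense) with the weak $H^1$ compactness identified against the $C^0$-limit. No pointwise manipulation of the Schwarzian derivative is needed, since the definition of $\I(\cdot)$ on $\LogSob(\T)$ already bypasses $\Schw_\phi$ in favour of $(\log\phi')'$.
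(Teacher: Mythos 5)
Your proof is correct and relies on the same core decomposition as the paper: $\I$ splits into the Dirichlet energy of $\log\phi'$ (lower semi-continuous) and the continuous term $-2\pi^2\|\phi'\|_{L^2(\T)}^2$, with the gauge fixing via $\DiffG^1(\T)$ handling the passage to the quotient. The paper simply cites the lower semi-continuity of $f\mapsto\int_{\T}|f'|^2$ on $C(\T)$ from \citep[Theorem 5.2.3]{bookLDP}, whereas you re-derive that fact from scratch via weak $H^1$-compactness; the argument is otherwise the same.
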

\begin{proof}
Recall that $\I(\phi) = \frac{1}{2}\int_{\T} \big(\log \phi'(\tau)\big)'^{\, 2}\d\tau -2\pi^2\int_{\T}\phi'^{\, 2}(\tau)\d\tau$.
It is well known that a sum of two lower semicontinuous functions is lower semicontinuous. 

First, we notice that the first term 
$\frac{1}{2}\int_{\T} (\log \phi'(\tau))'^{\, 2}\d\tau$ 
is lower semi-continuous on $\Diff^1(\T)$ with respect to $C^1$ topology.  
This follows from the fact that $\int_{\T} |f'(\tau)|^2\d\tau$ is lower semi-continuous on $C[0,1]$ with supremum norm topology (see, e.g. \citep[Theorem 5.2.3]{bookLDP}) applied to $f=\log \phi'$. 
Here we also use that supremum norms for both $\phi'$ and $\log \phi'$ induce the same topology on $\Diff^1(\T)$.
The second term $2\pi^2\int_{\T}\phi'^{\, 2}(\tau)\d\tau$ is a continuous function on $\Diff^1(\T)$, since we are using $C^1$ topology.
\end{proof}

Second, we prove the weak large deviations principle.
It differs from the large deviations principle only in that the upper bound holds only for compact sets, instead of closed sets.
\begin{prp}\label{prpWeakLDP}
The family of measures $\left\{\d\FMeas{\sigma^2}\right\}_{\sigma>0}$ satisfies weak large deviations principle with rate function $\I(\cdot)$ as $\sigma\to 0$. In other words, the following holds:
\begin{enumerate}
	\item For any open set $U\subset \Diff^1(\T)/\SL(2,\R)$, we have
\begin{equation}
\liminf_{\sigma\to 0} \sigma^2 \log \FMeas{\sigma^2}(U) \geq -\inf_{x\in U} \I(x).
\end{equation}
	\item For any compact set $V\subset \Diff^1(\T)/\SL(2,\R)$, we have
\begin{equation}
\limsup_{\sigma\to 0} \sigma^2 \log \FMeas{\sigma^2}(V) \leq -\inf_{x\in V} \I(x).
\end{equation}
\end{enumerate} 
\end{prp}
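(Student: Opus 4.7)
The plan is to reduce both halves of the weak LDP to local statements on small balls around smooth representatives, and to transfer Schilder's LDP for the Brownian bridge through the construction \eqref{defMeasureMu}--\eqref{defMeasureMeas} of the Schwarzian measure. The unnormalised Brownian bridge $\WS{\sigma^2}{0}{1}$ on $\Cfree[0,1]$ satisfies a good LDP with rate function $J_0(\xi)=\tfrac{1}{2}\int_0^1 \xi'^{\,2}\,dt$; stripping the normalisation $(\sqrt{2\pi}\sigma)^{-1}$ from Definition~\ref{defn:BB} only contributes $o(1)$ to $\sigma^2\log(\cdot)$, so the LDP lifts to the unnormalised measure. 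The map $\xi\mapsto \A_\xi$ is continuous from $C[0,1]$ to $\Diff^1(\T)$ in the $C^1$ topology (since $\A_\xi'=e^\xi/\!\int e^\xi$), with continuous inverse $\phi\mapsto\log\phi'-\log\phi'(0)$, and the Lebesgue factor $d\Theta$ in \eqref{defMeasureMu} is sub-exponential in $\sigma$. The contraction principle then yields a good LDP for $\mu_{\sigma^2}$ on $\Diff^1(\T)$ with rate function
\begin{equation*}
J(\phi)\;=\;\tfrac{1}{2}\!\int_0^1 (\log\phi')'^{\,2}\,dt \quad \text{for }\phi\in\LogSob(\T),\qquad J(\phi)=+\infty \text{ otherwise.}
\end{equation*}

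Next, I would transfer this to $\FMeas{\sigma^2}$ using a measurable section $s\colon \Diff^1(\T)/\SL(2,\R)\to \DiffG^1(\T)$ provided by \eqref{eq_gauge_fix_def}, together with a compact neighbourhood $K\subset\SL(2,\R)$ of the identity with $\nu_H(K)\in(0,\infty)$. Applying the factorisation \eqref{e:measurefactor} to $F=\indic_{K\cdot s(A)}$ and using unimodularity of $\SL(2,\R)$ gives the exact identity
\begin{equation*}
\FMeas{\sigma^2}(A)\;=\;\nu_H(K)^{-1}\!\!\int_{K\cdot s(A)}\exp\!\Bigl(\tfrac{2\pi^2}{\sigma^2}\textstyle\int_0^1\phi'^{\,2}\,d\tau\Bigr)\,d\mu_{\sigma^2}(\phi).
\end{equation*}
The density $\exp(\tfrac{2\pi^2}{\sigma^2}\int\phi'^{\,2})$ is unbounded on $\Diff^1(\T)$, but on a slab $K\cdot s(B)$ with $B$ a small ball in the quotient the quantity $\int\phi'^{\,2}$ is continuous and uniformly bounded. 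Crucially, the combination $J(\phi)-2\pi^2\int\phi'^{\,2}=\I(\phi)$ is $\SL(2,\R)$-invariant by Proposition~\ref{prpActionSLInvariance}, so applying the Varadhan form of the LDP for $\mu_{\sigma^2}$ on the slab, and collapsing the $K$ direction by invariance, gives the local asymptotic
\begin{equation*}
\sigma^2\log\FMeas{\sigma^2}\bigl(B([\phi_0],\delta)\bigr)\;=\;-\I([\phi_0])+o_\delta(1),\qquad \delta\to 0,
\end{equation*}
for every $[\phi_0]$ with $\I([\phi_0])<\infty$, using lower semicontinuity of $\I$ (Proposition~\ref{prpActionIsRateFunction}).

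From here, the upper bound on a compact $V$ follows by a standard finite covering argument together with lower semicontinuity of $\I$; the lower bound on an open $U$ follows by choosing $[\phi_0]\in U$ with $\I([\phi_0])$ arbitrarily close to $\inf_U \I$ (smoothing via density of $\Diff^\infty(\T)$ in $\LogSob(\T)$ and continuity of $\I$ on $\LogSob(\T)$ if necessary) and applying the local lower bound. The main obstacle is handling simultaneously the unbounded tilt $\exp(\tfrac{2\pi^2}{\sigma^2}\int\phi'^{\,2})$ and the non-compactness of the $\SL(2,\R)$ quotient: a naive Varadhan estimate on all of $\Diff^1(\T)$ fails because the tilt blows up along orbits. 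The mechanism that saves the argument is the $\SL(2,\R)$-invariance of $\I$, which allows one to confine the analysis to the bounded slab $K\cdot s(B)$, with the finite Haar factor $\nu_H(K)$ merely a constant that drops out of the $\sigma\to 0$ asymptotics.
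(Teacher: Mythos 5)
Your argument is correct and rests on the same ingredients as the paper's proof---the factorisation of Proposition~\ref{propMeasureFactor}, Schilder's LDP for the Brownian bridge, a Varadhan-type tilting, and the $\SL(2,\R)$-invariance of $\I$ (Proposition~\ref{prpActionSLInvariance})---but organises them differently. The paper never derives an intermediate LDP for $\mu_{\sigma^2}$: it fixes an open (resp.\ compact) $A\subset \SL(2,\R)$ with $\nu_H(A)=1$, writes $\FMeas{\sigma^2}(U)$ (resp.\ $\FMeas{\sigma^2}(V)$) as $\frac{1}{\sqrt{2\pi}\sigma}\int \exp(\sigma^{-2}F(\xi))\,\d\WS{\sigma^2}{0}{1}(\xi)$ with $F(\xi)=2\pi^2\int(\A_\xi')^2$ on $\A^{-1}(U\times A)$ (resp.\ $\A^{-1}(V\times A)$) and $F\equiv-\infty$ outside, and applies the semicontinuous halves of Varadhan's lemma (Dembo--Zeitouni, Lemmas~4.3.4 and~4.3.6) in a single step: the set is baked into the lower (resp.\ upper) semicontinuous function $F$, and in the compact case boundedness of $F$ from above is immediate because $\A^{-1}(V\times A)$ is compact. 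Your route contracts first to get an LDP for $\mu_{\sigma^2}$ on $\Diff^1(\T)$, then applies Varadhan locally on $C^1$-bounded slabs $K\cdot s(B)$ where the tilt $\int\phi'^{\,2}$ is bounded, and patches via finite covers and lower semicontinuity (Proposition~\ref{prpActionIsRateFunction}). Both are valid; the paper's version is more economical (no intermediate LDP, no local patching), while yours isolates the role of $\SL(2,\R)$-invariance and localisation more explicitly. One point to tighten in a full write-up: your local upper bound applies a Varadhan estimate on the slab $K\cdot s(\bar B)$, which is closed but not compact in $\Diff^1(\T)$ (a closed $C^1$-ball is not compact); this still works because contraction of a good LDP through the continuous map $\A$ yields a good LDP for $\mu_{\sigma^2}$, whose upper bound holds for all closed sets, and the tilt is bounded there---but you are implicitly invoking the full closed-set upper bound, not merely the weak (compact-set) one, whereas the paper sidesteps the issue by choosing a genuinely compact preimage $\A^{-1}(V\times A)$.
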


\begin{proof}
The action $\I(\cdot)$ is a rate function by Proposition \ref{prpWeakLDP}, so it is sufficient to give bounds on the measure.

Notice that $\Diff^1(\T)$ is topologically isomorphic to $\DiffG^1(\T)\times \SL(2,\R)$. 
Throughout this proof we slightly abuse the notation and identify this two spaces.

First, we prove the lower bound. Let $U \subset \DiffG^1(\T)$ be an open set. Fix an open set $A\subset\SL(2,\R)$ with Haar measure equal to $1$. Then from \eqref{defMeasureMeas}, \eqref{defMeasureMu} and Proposition \ref{propMeasureFactor} applied to indicator function of $U\times A$, we get
\begin{equation}
\FMeas{\sigma^2}(U) =\frac{1}{\sqrt{2\pi}\sigma} 
\int_{\A^{-1}(U\times A)}
\exp\left\{ \frac{2\pi^2 }{\sigma^2}\int_0^1 \big[\A_{\xi}'(\tau)\big]^2\d\tau\right\}
\d\WS{\sigma^2}{0}{1}(\xi)
\end{equation}
Define
\begin{equation}
F(\xi) = 
\begin{cases}	2\pi^2 \int_0^1 \big[\A_{\xi}'(\tau)\big]^2\d\tau, &\qquad \text{if } \xi \in \A^{-1}(U\times A)\\
-\infty,  &\qquad \text{otherwise.}
\end{cases} 
\end{equation}
Observe that since both $U$ and $A$ are open, $F$ is lower semicontinuous. Moreover,
\begin{equation}
\FMeas{\sigma^2}(U) =\frac{1}{\sqrt{2\pi}\sigma} 
\int_{C[0, 1]}
\exp\left\{ \frac{1}{\sigma^2}F(\xi)\right\}
\d\WS{\sigma^2}{0}{1}(\xi)
\end{equation}
Notice that $\d\WS{\sigma^2}{0}{1}(\xi)$ satisfies Large Deviations Principle with good rate function $\frac{1}{2}\int_0^1 |\xi'(\tau)|^2\d\tau$ (see, e.g. \citep[Theorem 3.4.12]{Deuschel_Stroock_LDP}). 
Then, using \citep[Lemma 4.3.4]{bookLDP},
\begin{equation}
\liminf_{\sigma\to 0}\sigma^2 \log \FMeas{\sigma^2}(U)\geq -\inf_{\xi \in \A^{-1}(U\times A)}\left(\frac{1}{2}\int_0^1 |\xi'(\tau)|^2\d\tau- F(\xi)\right).
\end{equation}
The claim now follows from 
Proposition \ref{prpActionSLInvariance}
and the fact that
\begin{equation}\label{eqLemmaWeakLDPRateFunctionFormula}
\I(\phi) = \frac{1}{2}\int_0^1 \Big|\big(\A^{-1}\phi\big)'(\tau)\Big|^2\d\tau- F\left(\A^{-1}\phi\right),
\end{equation}
for $\phi\in U\times A$.

Finally, we prove the upper bound. The proof is analogous to the lower bound.
Let $V \subset \DiffG^1(\T)$ be a compact set. Fix a compact set $A\subset\SL(2,\R)$ with Haar measure equal to $1$. 
Then from \eqref{defMeasureMeas}, \eqref{defMeasureMu}, and Proposition \ref{propMeasureFactor} applied to indicator function of $V\times A$, we get
\begin{equation}
\FMeas{\sigma^2}(V) =\frac{1}{\sqrt{2\pi}\sigma} 
\int_{\A^{-1}(V\times A)}
\exp\left\{ \frac{2\pi^2 }{\sigma^2}\int_0^1\big[\A_{\xi}'(\tau)\big]^{2}\d\tau\right\}
\d\WS{\sigma^2}{0}{1}(\xi)
\end{equation}

Define
\begin{equation}
F(\xi) = 
\begin{cases}	2\pi^2 \int_0^1 \big[\A_{\xi}'(\tau)\big]^{2}\d\tau, &\qquad \text{if } \xi \in \A^{-1}(V\times A)\\
-\infty,  &\qquad \text{otherwise.}
\end{cases} 
\end{equation}
Observe that, since both $V$ and $A$ are closed, $F$ is upper semicontinuous. 
Moreover, since both $\A$ and $\A^{-1}$ are continuous and both $V$ and $A$ are compact, we get that $F(\xi)$ is bounded from above. 
Furthermore,
\begin{equation}
\FMeas{\sigma^2}(V) =\frac{1}{\sqrt{2\pi}\sigma} 
\int_{C[0, 1]}
\exp\left\{ \frac{1}{\sigma^2}F(\xi)\right\}
\d\WS{\sigma^2}{0}{1}(\xi)
\end{equation}
We know that $\d\WS{\sigma^2}{0}{1}(\xi)$ satisfies Large Deviations Principle with good rate function $\frac{1}{2}\int_0^1 |\xi'(\tau)|^2\d\tau$. Therefore, using \citep[Lemma 4.3.6]{bookLDP},
\begin{equation}
\limsup_{\sigma\to 0}\sigma^2 \log \FMeas{\sigma^2}(V)\leq -\inf_{\xi \in \A^{-1}(V\times A)}\left(\frac{1}{2}\int_0^1 |\xi'(\tau)|^2\d\tau- F(\xi)\right).
\end{equation}
The statement now follows from
Proposition \ref{prpActionSLInvariance} 
and \eqref{eqLemmaWeakLDPRateFunctionFormula}.
 
\end{proof}

\section{Exponential estimates}
\label{sectExpEstimates}
The main goal of this subsection is to prove Theorem \ref{thrHolderClassExpTail}. 

\subsection{Exponential moments}
First, we estimate exponential moments of the observables which appear in the definition of our H\"{o}lder condition.
This estimate is sharper than the one in \citep{LosevCorr} when $s$ is close to $t$.
We also use this estimate to prove an exponential tale estimate.
\begin{prp}\label{prpBoundExpMoments}
There exists $C>0$ such that for any $\sigma>0$, any $s, t\in \T$ such that $t-s\leq 1/2$,
and any $z\in [-C^{-1}, C^{-1}]$, we have
\begin{equation}\label{eqMomentGenFuncBound}
\int \exp\left\{\frac{z\sqrt{(t-s)}}{\sigma^2} \left(\obs{\phi}{s}{t} -\frac{1}{(t- s)}\right)\right\}\d \FMeas{\sigma^2}(\phi) 
< C\, \exp\left(\frac{C}{\sigma^2}\right).
\end{equation}
\end{prp}
\begin{proof}

Using Proposition \ref{prpMainPartFunct} and Proposition \ref{prpMainObsMoments} we can express exponential moments of $\obs{\phi}{s}{t}$, as
\begin{multline}
\int \exp\left\{\frac{2 \lambda}{\sigma^2}\, \obs{\phi}{s}{t} \right\}\d \FMeas{\sigma^2}(\phi) 
 = \int_{\R_+} \exp\left(-\frac{\sigma^2}{2}\cdot k_2^2\right)\sinh(2\pi k_2)\, 2k_2\d k_2 \\
 +
\sum_{l=1}^{\infty}\int_{\R_+^{2}} 
\frac{\Gamma\big(\frac{l}{2} \pm i k_1 \pm i k_2\big)}{2\pi^2\, \Gamma(l)}\cdot\frac{\lambda^l}{l!}\cdot
\exp\left(-\frac{(t-s)\sigma^2}{2}\cdot k_1^2 -\frac{\big(1-(t-s)\big)\sigma^2}{2}\cdot k_2^2\right)  
\\
\times
\sinh(2\pi k_1)\, 2 k_1 \sinh(2\pi k_2)\, 2 k_2 \d k_1 \d k_2,
\end{multline}
for $\lambda\in [-1, 1]$. Here the right-hand side converges absolutely, because it is dominated by the same expression with $\lambda=1$, which converges because all terms are positive and exponential moments of $\obs{\phi}{s}{t}$ exist (see Proposition~\ref{prpExpMoment}).

Because of rotational invariance in the argument we can assume that $s=0$.

First, we give an upper bound for the integral over $k_1$.
It follows from Proposition~\ref{prpKeyArccoshExpansion} that
\begin{multline}\label{eq_prp_bound_exp_moment_gener_func}
\int_{\R_+}\sum_{l=1}^{\infty}\frac{\Gamma\big(\frac{l}{2} \pm i k_1 \pm i k_2\big)}{2\pi^2\, \Gamma(l)}\cdot\frac{\lambda^l }{l!}\cdot \exp\left\{-\frac{t \sigma^2}{2}\cdot k^2_1\right\}\sinh(2\pi k_1)\, 2 k_1 \d k_1\\
=
\frac{2}{\pi}\int_{\R_+}\int_{\R_+} \exp\left\{-\frac{t\sigma^2}{2}\cdot k^2_1\right\}
\Bigg(\cos\Big(2k_1\, \arccosh\left[\cosh\left(\tfrac{\beta}{2}\right)-\tfrac{\lambda}{2}\right]\Big)- \cos(k_1\beta)\Bigg)
\cos\left(k_2 \beta\right) \d \beta \d k_1 .
\end{multline}
Notice that by Statement~\ref{stmArccoshTaylor}
\begin{equation}
\left|\cos\Big(2k_1\, \arccosh\left[\cosh\left(\tfrac{\beta}{2}\right)-\tfrac{\lambda}{2}\right]\Big)- \cos(k_1\beta)\right|
=
O\left(e^{10k_1-\beta/10}\right),
\end{equation}
and so the integral in the right-hand side of \eqref{eq_prp_bound_exp_moment_gener_func} converges absolutely.
We calculate the integral in $k_1$ first,
\begin{equation}
\frac{2}{\pi}
\int_{\R_+}\int_{\R_+} \exp\left\{-\frac{t\sigma^2}{2}\cdot k^2_1\right\}
\cos(k_1\beta)
\d k_1
\cos\left(k_2 \beta\right) \d \beta  =  \exp\left\{-\frac{t\sigma^2}{2}\cdot k^2_2\right\},
\end{equation}
and
\begin{multline}
\frac{2}{\pi}\int_{\R_+}\int_{\R_+} \exp\left\{-\frac{t\sigma^2}{2}\cdot k^2_1\right\}\cos\left(2k_1\, \arccosh\left[\cosh\left(\frac{\beta}{2}\right)-\frac{\lambda}{2}\right]\right)
\d k_1 \cos\left(k_2 \beta\right) \d \beta\\
=
\frac{\sqrt{2}}{\sqrt{\pi t}\sigma}
\int_{\R_+}
\exp\left\{ -\frac{2}{t\sigma^2}
\arccosh^2\left[\cosh\left(\frac{\beta}{2}\right)-\frac{\lambda}{2}\right]\right\}\cos\left(k_2 \beta\right) \d \beta\\
\leq
\frac{\sqrt{2}}{\sqrt{\pi t}\sigma}
\int_{\R_+}
\exp\left\{ -\frac{2}{t\sigma^2}
\arccosh^2\left[\cosh\left(\frac{\beta}{2}\right)-\frac{\lambda}{2}\right]\right\}\d \beta.
\end{multline}
Using Statement~\ref{stmArccoshTaylor}, we get that for some $C_1>0$
\begin{equation}
\int_{\R_+}
\exp\left\{ -\frac{2}{t\sigma^2}
\arccosh^2\left[\cosh\left(\frac{\beta}{2}\right)-\frac{\lambda}{2}\right]\right\}\d \beta
\leq
\int_{\R_+}
\exp\left\{ -\frac{\beta^2}{2t\sigma^2}
+\frac{2\lambda}{t\sigma^2} + \frac{C_1|\lambda|(\beta^2+ |\lambda|)}{2t\sigma^2}
\right\}\d \beta.
\end{equation}
Therefore,
\begin{multline}
\exp\left\{-\frac{t\sigma^2}{2}\cdot k^2_2\right\}+
\int_{\R_+}\sum_{l=1}^{\infty}\frac{\Gamma\big(\frac{l}{2} \pm i k_1 \pm i k_2\big)}{2\pi^2\, \Gamma(l)}\cdot\frac{\lambda^l }{l!}\cdot \exp\left\{-\frac{t \sigma^2}{2}\cdot k^2_1\right\}\sinh(2\pi k_1)\, 2 k_1 \d k_1 \\
 \leq
 \frac{\sqrt{2}}{\sqrt{\pi t}\sigma}
 \int_{\R_+}
\exp\left\{ -\frac{\beta^2}{2t\sigma^2}
+\frac{2\lambda}{t\sigma^2} + \frac{C_1|\lambda|(\beta^2+ |\lambda|)}{2t\sigma^2}
\right\}\d \beta.
\end{multline}

Hence, taking $\lambda = z\sqrt{t}/2$, we obtain that, for some $C_2>0$ and for all $z\in[-C_1^{-1}, C_1^{-1}]$, 
\begin{multline}
\int \exp\left\{\frac{z\sqrt{t}}{\sigma^2} \left(\obs{\phi}{0}{t} -\frac{1}{t}\right)\right\}\d \FMeas{\sigma^2}(\phi) \\
\leq
\frac{\sqrt{2}}{\sqrt{\pi t}\sigma}
\int_{\R_+^2}
\exp\left\{ -\frac{\beta^2}{2t\sigma^2}
+ \frac{C_1 z^2}{4\sigma^2}
+ \frac{C_1 z \beta^2}{4\sqrt{t}\sigma^2}
\right\}
\exp\left(-\frac{(1-t)\sigma^2}{2}\cdot k_2^2\right)
\sinh(2\pi k_2)\, 2k_2 \d k_2
\d \beta
\\
\leq 
C_2 \exp\left(\frac{C_2z^2}{\sigma^2}\right) \PartFunc{\sigma^2/2},
\end{multline}
which, together with the explicit formula for $\PartFunc{\sigma^2/2}$ in Proposition~\ref{prpMainPartFunct}, finishes the proof.
\end{proof}

\begin{crl}\label{crlObsTailBound}
There exists $C>0$ such that for any $M, \eps>0$, $\sigma>0$, and $s\neq t$,
\begin{multline}
\FMeas{\sigma^2} \left(\phi \in \Diff^1(\T)/\SL(2, \R)\,:\, \left|\obs{\phi}{s}{t}-\frac{\pi}{\sin\Big(\pi[t-s]\Big)}\right|>M\, \dist{s}{t}^{-\tfrac{1}{2}-\eps}\right)\\
<C\exp\left(\frac{C-C^{-1}M\, \dist{s}{t}^{-\eps}}{\sigma^2}\right).
\end{multline}
\end{crl}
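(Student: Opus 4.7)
The plan is to derive the tail bound from the exponential moment estimate in Proposition~\ref{prpBoundExpMoments} by a standard Chernoff/Markov argument, and then to translate between the centring constant $1/(t-s)$ appearing in Proposition~\ref{prpBoundExpMoments} and the centring constant $\pi/\sin(\pi[t-s])$ appearing in the Corollary.

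First, by the symmetry $\obs{\phi}{s}{t}=\obs{\phi}{t}{s}$ (the absolute value appears in $|\sin(\pi[\phi(t)-\phi(s)])|$ implicit in how the observable is used) together with rotational invariance, I may assume without loss of generality that $t-s\leq 1/2$, so that $t-s=\dist{s}{t}=:d$. Let $C_0$ denote the constant from Proposition~\ref{prpBoundExpMoments} and fix $z_0=C_0^{-1}$. Applying Markov's inequality with $z=\pm z_0$ and the exponential moment bound yields, for any $M_1>0$,
\begin{equation}
\FMeas{\sigma^2}\!\left(\left|\obs{\phi}{s}{t}-\tfrac{1}{d}\right|>M_1\, d^{-1/2-\eps}\right)\leq 2C_0\exp\!\left(\frac{C_0-z_0 M_1\, d^{-\eps}}{\sigma^2}\right),
\end{equation}
since $\sqrt{d}\cdot d^{-1/2-\eps}=d^{-\eps}$.

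Next, the smooth function $d\mapsto \pi/\sin(\pi d)-1/d$ is bounded on $(0,1/2]$; call this bound $C_1$. Thus if $|\obs{\phi}{s}{t}-\pi/\sin(\pi d)|>M\, d^{-1/2-\eps}$, the triangle inequality gives $|\obs{\phi}{s}{t}-1/d|>M\, d^{-1/2-\eps}-C_1$. The main (slightly annoying) obstacle, as often happens with such tail bounds stated uniformly in all parameters, is that both $M$ and $d$ range freely, so one must split into two regimes. If $M\, d^{-1/2-\eps}\geq 2C_1$, then $M\, d^{-1/2-\eps}-C_1\geq \tfrac12 M\, d^{-1/2-\eps}$, and applying the Markov bound above with $M_1=M/2$ gives the desired estimate after adjusting the constant in the exponent. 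In the complementary regime $M\, d^{-1/2-\eps}<2C_1$, the quantity $M\, d^{-\eps}=\sqrt{d}\cdot M\, d^{-1/2-\eps}<\sqrt{d}\cdot 2C_1$ is uniformly bounded, so the right-hand side of the claimed inequality exceeds $C_2\exp(-C_3/\sigma^2)$ for constants $C_2,C_3$, and the bound follows trivially from
\begin{equation}
\FMeas{\sigma^2}(\cdot)\leq \PartF(\sigma^2)=\left(\tfrac{2\pi}{\sigma^2}\right)^{3/2}\exp\!\left(\tfrac{2\pi^2}{\sigma^2}\right)\leq C_4\exp\!\left(\tfrac{C_4}{\sigma^2}\right),
\end{equation}
using that the polynomial prefactor $\sigma^{-3}$ is dominated by $e^{c/\sigma^2}$ for any $c>0$, up to an absolute constant. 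Choosing $C$ large enough to absorb all of $C_0, z_0^{-1}, C_1, C_4$ into a single constant completes the argument.
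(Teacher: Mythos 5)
Your proof is correct and follows essentially the same approach as the paper, which simply states that the result follows from Proposition~\ref{prpBoundExpMoments}, the uniform bound on $\left|\frac{\pi}{\sin(\pi[t-s])}-\frac{1}{t-s}\right|$, and Markov's inequality; you have merely spelled out the (straightforward) bookkeeping that the paper suppresses. One small imprecision: in the regime $M\,d^{-1/2-\eps}<2C_1$, what you actually need (and can get, as you indicate at the end) is that the right-hand side $C\exp\bigl((C-C^{-1}Md^{-\eps})/\sigma^2\bigr)$ exceeds $\PartF(\sigma^2)$ once $C$ is chosen large --- since $Md^{-\eps}$ is bounded, the exponent $C-C^{-1}Md^{-\eps}$ can be driven above $2\pi^2$ plus a margin that absorbs the $\sigma^{-3}$ prefactor --- so the intermediate lower bound $C_2\exp(-C_3/\sigma^2)$ you wrote should really read $C_2\exp(+C_3/\sigma^2)$ with $C_3$ growing in $C$; as written it is too weak to conclude, though your closing sentence shows you have the right idea.
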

\begin{proof}
Follows from Proposition \ref{prpBoundExpMoments}, the fact that for some $C'>0$ we have
\begin{equation}
\left|\frac{\pi}{\sin\Big(\pi [t-s]\Big)} - \frac{1}{t-s}\right| \leq C',
\end{equation} 
and Markov's Inequality for moment generating functions.
\end{proof}

\subsection{Interpolation}
We start by proving two lemmas which allow us to relate observables calculated at different points to each other.

\begin{lmm}\label{lmmObsAlgEquality}
Let $\tau_1, \tau_2, \tau_3, \tau_4$ be distinct points that lie on the circle either in clockwise or anticlockwise order. 
Then
\begin{equation}\label{lmmObsAlgEqualityEq}
\frac{1}{\obs{\phi}{\tau_1}{\tau_3}\obs{\phi}{\tau_2}{\tau_4}} = 
\frac{1}{\obs{\phi}{\tau_1}{\tau_2}\obs{\phi}{\tau_3}{\tau_4}}+\frac{1}{\obs{\phi}{\tau_1}{\tau_4}\obs{\phi}{\tau_2}{\tau_3}}
\end{equation}
\end{lmm}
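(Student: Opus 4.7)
The plan is to reduce the identity to a purely trigonometric statement on the image points under $\phi$, by exploiting the fact that the prefactors involving $\phi'$ cancel on both sides. From the definition \eqref{eqDefObs_intro} we have, for any $s,t\in\T$,
\begin{equation}
\frac{1}{\obs{\phi}{s}{t}} = \frac{\sin\!\big(\pi[\phi(t)-\phi(s)]\big)}{\pi\sqrt{\phi'(s)\phi'(t)}}.
\end{equation}
Hence in every one of the three products appearing in \eqref{lmmObsAlgEqualityEq}, each of the four indices $1,2,3,4$ occurs exactly once under the square root, so every term carries the common factor $\big(\pi^2\sqrt{\phi'(\tau_1)\phi'(\tau_2)\phi'(\tau_3)\phi'(\tau_4)}\big)^{-1}$. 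After clearing this common factor, the claim is equivalent to
\begin{equation}\label{eqPropSineIdentity}
\sin\!\big(\pi[\phi(\tau_3)-\phi(\tau_1)]\big)\sin\!\big(\pi[\phi(\tau_4)-\phi(\tau_2)]\big)
= \sin\!\big(\pi[\phi(\tau_2)-\phi(\tau_1)]\big)\sin\!\big(\pi[\phi(\tau_4)-\phi(\tau_3)]\big) + \sin\!\big(\pi[\phi(\tau_4)-\phi(\tau_1)]\big)\sin\!\big(\pi[\phi(\tau_3)-\phi(\tau_2)]\big).
\end{equation}

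Setting $a_i = \pi\phi(\tau_i)$, the statement reduces to the purely trigonometric identity
\begin{equation}
\sin(a_3-a_1)\sin(a_4-a_2) = \sin(a_2-a_1)\sin(a_4-a_3) + \sin(a_4-a_1)\sin(a_3-a_2),
\end{equation}
which I would verify by expanding each product via the product-to-sum formula $2\sin X\sin Y = \cos(X-Y)-\cos(X+Y)$. A direct bookkeeping check shows that both sides equal $\tfrac{1}{2}\big[\cos(a_2+a_3-a_1-a_4) - \cos(a_3+a_4-a_1-a_2)\big]$, which establishes \eqref{eqPropSineIdentity}. This identity is the classical ``sine Ptolemy'' relation and holds for arbitrary real $a_1,a_2,a_3,a_4$; the cyclic ordering hypothesis on $\tau_1,\dots,\tau_4$ is not needed for the algebraic equality itself (it only ensures that the $\phi(\tau_i)$, as points on $\T$, inherit a consistent ordering so the expressions $\phi(\tau_j)-\phi(\tau_i)\in\R/\Z$ lift in a compatible way).

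There is no real obstacle here: the proof is purely algebraic once one observes the cancellation of the derivative prefactors. The only mild subtlety to comment on is the well-definedness of the expression $\phi(\tau_j)-\phi(\tau_i)$ modulo $1$ inside $\sin(\pi\,\cdot\,)$, which causes no ambiguity since $\sin(\pi\,\cdot\,)$ is antiperiodic with period $1$ and the overall identity is homogeneous in signs when the four points are traversed in cyclic order.
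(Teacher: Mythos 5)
Your proof is correct, though it takes a slightly different route from the paper. The paper first establishes a three-point relation: for $u,v,w$ in anti-clockwise order,
\begin{equation}
\frac{\obs{\phi}{u}{v}\,\obs{\phi}{u}{w}}{\obs{\phi}{v}{w}} = \pi\phi'(u)\Big(\cot\big(\pi[\phi(v)-\phi(u)]\big)-\cot\big(\pi[\phi(w)-\phi(u)]\big)\Big),
\end{equation}
and then adds two such relations (with $u=\tau_1$ throughout and $(v,w)$ running over $(\tau_2,\tau_3)$ and $(\tau_3,\tau_4)$), telescoping the cotangents to produce $(\tau_2,\tau_4)$; dividing by $\obs{\phi}{\tau_1}{\tau_2}\obs{\phi}{\tau_1}{\tau_3}\obs{\phi}{\tau_1}{\tau_4}$ then yields \eqref{lmmObsAlgEqualityEq}. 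You instead clear the $\phi'$ prefactors globally (correctly noting that each of the four indices appears once in each product) and reduce the statement directly to the sine Ptolemy identity
\begin{equation}
\sin(a_3-a_1)\sin(a_4-a_2)=\sin(a_2-a_1)\sin(a_4-a_3)+\sin(a_4-a_1)\sin(a_3-a_2),
\end{equation}
verified by product-to-sum expansion. Both arguments are elementary and essentially equivalent in content --- the cotangent-difference relation is itself just a repackaging of the sine Ptolemy identity --- but the paper's version emphasises the telescoping structure that is also used in Lemma~\ref{lmmObsIneq}, whereas yours exposes the cross-ratio/Ptolemy structure more directly. Your remark that the cyclic ordering only enforces a compatible lifting of the differences $\phi(\tau_j)-\phi(\tau_i)$, while the trigonometric identity holds universally, is accurate and worth stating.
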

\begin{proof}
Notice that if $u, v, w$ lie on $\T$ in anticlockwise order then
\begin{multline}
\frac{\obs{\phi}{u}{v}\obs{\phi}{u}{w}}{\obs{\phi}{v}{w}} 
= \frac{\pi\phi'(u)\sin\Big(\pi\big[\phi(w)-\phi(v)\big]\Big)}{\sin\Big(\pi\big[\phi(v)-\phi(u)\big]\Big)\sin\Big(\pi\big[\phi(w)-\phi(u)\big]\Big)} \\
= \pi\phi'(u)\Biggl( \cot \Big(\pi\big[\phi(v)-\phi(u)\big]\Big) -  \cot \Big(\pi\big[\phi(w)-\phi(u)\big]\Big) 
\Biggl).
\end{multline}
Therefore,
\begin{equation}
\frac{\obs{\phi}{\tau_1}{\tau_2}\obs{\phi}{\tau_1}{\tau_3}}{\obs{\phi}{\tau_2}{\tau_3}} 
+\frac{\obs{\phi}{\tau_1}{\tau_4}\obs{\phi}{\tau_1}{\tau_3}}{\obs{\phi}{\tau_3}{\tau_4}} 
=
\frac{\obs{\phi}{\tau_1}{\tau_2}\obs{\phi}{\tau_1}{\tau_4}}{\obs{\phi}{\tau_2}{\tau_4}},
\end{equation}
which is equivalent to \eqref{lmmObsAlgEqualityEq}.
\end{proof}

\begin{lmm}\label{lmmObsIneq}
Assume that $s, s_1, t_1, t, t_2$ are points that lie on the circle either in clockwise or anticlockwise order.
Then
\begin{equation}
\frac{\obs{\phi}{s}{t_1}\obs{\phi}{s_1}{t}}{\obs{\phi}{s_1}{t_1}}
<\obs{\phi}{s}{t}
< \frac{\obs{\phi}{s}{t_2}\obs{\phi}{s_1}{t}}{\obs{\phi}{s_1}{t_2}}.
\end{equation}
\end{lmm}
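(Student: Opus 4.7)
The proof is a direct consequence of Lemma \ref{lmmObsAlgEquality}, exploiting positivity of the cross-ratio observables. My plan is to apply the algebraic identity \eqref{lmmObsAlgEqualityEq} to two carefully chosen quadruples drawn from the five points $s, s_1, t_1, t, t_2$.

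First, note that by the definition \eqref{eqDefObs_intro}, each $\obs{\phi}{u}{v}$ is strictly positive for $u\neq v$: the numerator $\pi\sqrt{\phi'(t)\phi'(s)}$ is positive since $\phi$ is orientation-preserving, and the denominator $\sin(\pi[\phi(t)-\phi(s)])$ is strictly positive since $\phi(t)-\phi(s)\in (0,1)$ whenever $s\neq t$. Consequently, dropping any single positive term from the right-hand side of \eqref{lmmObsAlgEqualityEq} yields a strict inequality.

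For the left inequality, I apply Lemma \ref{lmmObsAlgEquality} to the ordered quadruple $(\tau_1,\tau_2,\tau_3,\tau_4) = (s, s_1, t_1, t)$, which is in the correct cyclic order by the hypothesis on the five points. This gives
\begin{equation}
\frac{1}{\obs{\phi}{s}{t_1}\obs{\phi}{s_1}{t}} = \frac{1}{\obs{\phi}{s}{s_1}\obs{\phi}{t_1}{t}}+\frac{1}{\obs{\phi}{s}{t}\obs{\phi}{s_1}{t_1}},
\end{equation}
so dropping the first positive summand on the right yields $\obs{\phi}{s}{t}\obs{\phi}{s_1}{t_1} > \obs{\phi}{s}{t_1}\obs{\phi}{s_1}{t}$, which rearranges to the desired lower bound.

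For the right inequality, I apply the same lemma instead to $(\tau_1,\tau_2,\tau_3,\tau_4) = (s, s_1, t, t_2)$ (again in cyclic order after dropping $t_1$). This yields
\begin{equation}
\frac{1}{\obs{\phi}{s}{t}\obs{\phi}{s_1}{t_2}} = \frac{1}{\obs{\phi}{s}{s_1}\obs{\phi}{t}{t_2}}+\frac{1}{\obs{\phi}{s}{t_2}\obs{\phi}{s_1}{t}},
\end{equation}
and dropping the first positive summand gives $\obs{\phi}{s}{t}\obs{\phi}{s_1}{t_2} < \obs{\phi}{s}{t_2}\obs{\phi}{s_1}{t}$, which rearranges to the desired upper bound. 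There is no serious obstacle here — the only point that requires care is verifying that the two quadruples I select inherit the correct cyclic ordering from the original five points, which is immediate.
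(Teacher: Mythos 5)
Your proof is correct and follows essentially the same approach as the paper: both apply Lemma~\ref{lmmObsAlgEquality} to the quadruples $(s,s_1,t_1,t)$ and $(s,s_1,t,t_2)$ and then drop a positive term using positivity of the observables. The paper merely presents the resulting identities after multiplying through by $\obs{\phi}{s_1}{t_1}$ and $\obs{\phi}{s_1}{t_2}$ respectively, which is only a cosmetic difference.
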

\begin{proof}

It follows from Lemma \ref{lmmObsAlgEquality} that
\begin{equation}
\frac{1}{
\obs{\phi}{s}{t}} 
= \obs{\phi}{s_1}{t_2}\Biggl(\frac{1}{\obs{\phi}{s}{t_2}\obs{\phi}{s_1}{t}}
+\frac{1}{\obs{\phi}{s}{s_1}\obs{\phi}{t}{t_2}}\Biggl)
>\frac{\obs{\phi}{s_1}{t_2}}{\obs{\phi}{s}{t_2}\obs{\phi}{s_1}{t}},
\end{equation}
and
\begin{equation}
\frac{1}{\obs{\phi}{s}{t}}
=
\obs{\phi}{s_1}{t_1}
\Biggl(\frac{1}{\obs{\phi}{s}{t_1}\obs{\phi}{s_1}{t}}
-\frac{1}{\obs{\phi}{s}{s_1}\obs{\phi}{t_1}{t}}
\Biggl) 
< \frac{\obs{\phi}{s_1}{t_1}}{\obs{\phi}{s}{t_1}\obs{\phi}{s_1}{t}},
\end{equation}
which finishes the proof.
\end{proof}

\subsection{Exponential concentration}
Recall the statement of Theorem~\ref{thrHolderClassExpTail}.
\begin{customthm}{\ref{thrHolderClassExpTail}}
For any $\alpha\in [1/4, 1/2)$ and any $\Lambda, N>0$, there exists $M>0$ such that for any $\sigma\in (0, \Lambda)$,
\begin{equation}\label{eqExpBoundAllPoints2}
\FMeas{\sigma^2} \Big(
\MyHol{\alpha}{M}
\Big)
\\
\geq \PartF(\sigma^2)- \exp\left(-\frac{N}{\sigma^2}\right).
\end{equation}
\end{customthm}

\begin{proof}[Proof of Theorem \ref{thrHolderClassExpTail}]
Let $\Dyadic{n} = \Big\{k/2^n \, : \, k\in \{1, 2, 3, \ldots 2^n-1\}\Big\}$. 

Let $K>1$ be a large number to be chosen later, and denote $R = 10^{10} K$.
Let $\mathsf{A}_n = \mathsf{A}_n(K)$ be an event
\begin{multline}
\mathsf{A}_n = \Biggl\{\phi\in\Diff^1(\T)/\SL(2, \R) \, :\, \left|\log \left[\obs{\phi}{s}{t}\right]-\log \left[\frac{\pi}{\sin\big(\pi[t-s]\big)}\right]\right|<K \, \dist{s}{t}^{\alpha}, \\
\text{for all } s,t\in \Dyadic{n} \text{ such that } \dist {s}{t} \leq 20 \cdot 2^{-n/2} 
\Biggl\}.
\end{multline}

\medskip

\textbf{Step 1.}
We start by giving a lower bound for $\FMeas{\sigma^2}\big(\cap_{n=R}^{\infty}\mathsf{A}_n\big)$.
Denote
\begin{multline}
\mathsf{B}_n = \Biggl\{\phi\in\Diff^1(\T)/\SL(2, \R) \, :\, \left|\obs{\phi}{s}{t}-\frac{\pi}{\sin\big(\pi[t-s]\big)}\right|< \frac{1}{10} \, K \, \dist{s}{t}^{\alpha-1},\\
\text{ for all } s,t\in \Dyadic{n} \text{ such that } \dist{s}{t} \leq 20 \cdot 2^{-n/2}\Biggl\}.
\end{multline}

It follows from the inequalities $\forall x\in (-1/2, 1/2):\, \left|\log (1+x)\right|\leq 10 |x|$ and $\left|\sin (x)\right|\leq |x|$ that for $n\geq R$, we have $\mathsf{B}_n \subset \mathsf{A}_n$.
Moreover, it follows from Corollary \ref{crlObsTailBound} that for some $C>0$ (possibly different from $C$ in  Corollary \ref{crlObsTailBound}),
\begin{equation}
\FMeas{\sigma^2}\big(\mathsf{B}_n\big)
\geq 1 - C \, 2^{3n/2}\exp\left(\frac{C-C^{-1}\, K\, 2^{\left(\tfrac{1}{2}-\alpha\right)n/2}}{\sigma^2}\right).
\end{equation}
Note that the factor $2^{3n/2}$ comes from the fact that $\mathsf{B}_n$ is defined by $20\cdot 2^{3n/2}$ inequalities.

Since $\alpha<1/2$, we deduce that for any $N>0$ we can find $K>1$ such that, for any $\sigma \in (0, \Lambda)$, 
\begin{equation}\label{eqCapALowerBound}
\FMeas{\sigma^2}\Big(\cap_{n=R}^{\infty} \mathsf{A}_n\Big)
\geq
\FMeas{\sigma^2}\Big(\cap_{n=R}^{\infty} \mathsf{B}_n\Big)
\geq \PartF(\sigma^2) -\exp\left(-\frac{N}{\sigma^2}\right).
\end{equation}

\medskip
\textbf{Step 2.} Second, we show that if $\phi \in \cap_{n=R}^{\infty} \mathsf{A}_n$, then for any $m\geq n\geq R$, any $s\in \Dyadic{n}$ and any $t \in \Dyadic{m}$ with $\dist{s}{t}\leq 10 \cdot 2^{-n}$, we have
\begin{equation}\label{eqLogObsApproxInduction}
 \left|\log \left[\obs{\phi}{s}{t}\right]-\log \left[\frac{\pi}{\sin\big(\pi[t-s]\big)}\right]\right|
 < 10^6  K  \, \dist{s}{t}^{\alpha}.
\end{equation}

We prove this by reverse induction on $n$. We also assume that $t\notin\Dyadic{m-1}$.

\textit{Base Case:} $n=m$.
This follows from the fact that $\phi\in \mathsf{A}_m$.

\textit{Induction Step:} $n+1 \mapsto n$.

If $\dist{s}{t}\leq 10\cdot 2^{-n-1}$, then we just apply the induction hypothesis. 
Therefore, now we can assume $\dist{s}{t}> 10\cdot 2^{-n-1}$.

Let $s_1, s_2 \in \Dyadic{n+1}$ be two consecutive points in $\Dyadic{n+1}$ such that $t$ lies on the dyadic interval of size $2^{-n-1}$ between them. 
We assume that $\dist{s_1}{t}\geq \dist{s_2}{t}$. 
In particular, $2^{-n-2} \leq \dist{s_1}{t}\leq 2^{-n-1}$.

If $2n< m$, we let $t_1, t_2\in \Dyadic{2n}$ be two points such that $t$ lies on the dyadic interval of size $2^{-2n}$ between them. 
Moreover, we assume that $t_1$ lies between $s_1$ and $t$. 
In other words, $s, s_1, t_1, t, t_2$ lie on the circle either in clockwise or anticlockwise order.
If $2n\geq  m$, we just put $t_1 = t_2 = t$.
Now we can apply Lemma \ref{lmmObsIneq}.

\medskip

First, we give an upper bound for $\log \left[\obs{\phi}{s}{t}\right]$.
From Lemma \ref{lmmObsIneq} we deduce that
\begin{equation}\label{eqLogObsBoundAboveInduction}
\log \left[\obs{\phi}{s}{t}\right]
\leq 
\log \left[\obs{\phi}{s_1}{t}\right]
+\log \left[\obs{\phi}{s}{t_2}\right]
- \log \left[\obs{\phi}{s_1}{t_2}\right].
\end{equation}

Using induction hypothesis and the fact that $\dist{s_1}{t}\leq 2^{-n-1}$, we get
\begin{multline}\label{eqLogObsBoundAboveInductionFirstTerm}
\log \left[\obs{\phi}{s_1}{t}\right]
\leq \log\left[\frac{\pi}{\sin\big(\pi[t-s_1]\big)}\right] 
+ 10^6 K\, \dist{s_1}{t}^{\alpha}\\
\leq
 \log\left[\frac{\pi}{\sin\big(\pi[t-s_1]\big)}\right] 
+10^6 K\, 2^{-\alpha(n+1)}.
\end{multline}
Moreover, since $\phi \in \mathsf{A}_{2n}$, $s, t_2\in\Dyadic{2n}$ and 
$\dist{s}{t_2}\leq \dist{s}{t}+\dist{t_1}{t_2}\leq 10\cdot 2^{-n} +2^{-2n} \leq 11\cdot 2^{-n}$, we have
\begin{equation}
\log \left[\obs{\phi}{s}{t_2}\right]
\leq
\log\left[\frac{\pi}{\sin\big(\pi[t_2-s]\big)}\right] 
+  K\, \dist{s}{t_2}^{\alpha}.
\end{equation}
Combining this with the fact that $10 \cdot 2^{-n-1}<\dist{s}{t}<10 \cdot 2^{-n}$, inequality $\dist{t}{t_2}\leq\dist{t_1}{t_2}\leq 2^{-2n}$, and Statement \ref{stmLogSinTriangle}, we obtain
\begin{equation}\label{eqLogObsBoundAboveInductionSecondTerm}
\log \left[\obs{\phi}{s}{t_2}\right]
\leq
\log\left[\frac{\pi}{\sin\big(\pi[t-s]\big)}\right] 
+1000\cdot 2^{-n}
+11 K\, 2^{-\alpha n}
.
\end{equation}
Similarly, since $\phi \in \mathsf{A}_{2n}$, $s_1, t_2\in\Dyadic{2n}$ and 
$\dist{s_1}{t_2}\leq \dist{s_1}{s_2}\leq 2^{-n-1}$, we get
\begin{equation}
\log \left[\obs{\phi}{s_1}{t_2}\right] 
\geq 
\log\left[\frac{\pi}{\sin\big(\pi[t_2-s_1]\big)}\right] 
- K\, \dist{s_1}{t_2}^{\alpha}.
\end{equation}
Combining this with the inequalities $2^{-n-2} \leq \dist{s_1}{t}\leq 2^{-n-1}$ and $\dist{t}{t_2}\leq\dist{t_1}{t_2}\leq 2^{-2n}$ and Statement~\ref{stmLogSinTriangle}, we obtain
\begin{equation}\label{eqLogObsBoundAboveInductionThirdTerm}
\log \left[\obs{\phi}{s_1}{t_2}\right] 
\geq
\log\left[\frac{\pi}{\sin\big(\pi[t-s_1]\big)}\right] 
- 4000\cdot 2^{-n}
- K\, 2^{-\alpha n}.
\end{equation}

Using \eqref{eqLogObsBoundAboveInduction}, \eqref{eqLogObsBoundAboveInductionFirstTerm}, \eqref{eqLogObsBoundAboveInductionSecondTerm}, and \eqref{eqLogObsBoundAboveInductionThirdTerm}, we get
\begin{equation}\label{eqLogObsBoundAboveInductionLastStep}
\log \left[\obs{\phi}{s}{t}\right]
\leq 
\log\left[\frac{\pi}{\sin\big(\pi[t-s]\big)}\right]  
+ \Big(10^6\cdot 2^{-\alpha}+10^5\Big)K\, 2^{-\alpha n}.
\end{equation}
Since $\alpha\geq 1/4$, we have $2^{-\alpha}\leq 2^{-1/4} < \frac{9}{10}$, so 
\begin{equation}\label{eqLogObsBoundAboveInductionErrorUpperBound}
\Big(10^6\cdot 2^{-\alpha}+10^5\Big)K\, 2^{-\alpha n}
< 10^6 K\, 2^{-\alpha n}<10^6 K\, \dist{s}{t}^{\alpha}.
\end{equation}
Combining \eqref{eqLogObsBoundAboveInductionLastStep} and \eqref{eqLogObsBoundAboveInductionErrorUpperBound}, we deduce that
\begin{equation}
\log \left[\obs{\phi}{s}{t}\right]
\leq 
\log\left[\frac{\pi}{\sin\big(\pi[t-s]\big)}\right] 
+10^6 K\, \dist{s}{t}^{\alpha}.
\end{equation}

\medskip

Second, analogously to the upper bound for $\log \left[\obs{\phi}{s}{t}\right]$, we can bound it from below using Lemma \ref{lmmObsIneq} by
\begin{equation}
\log \left[\obs{\phi}{s}{t}\right] 
\geq 
\log\left[\frac{\pi}{\sin\big(\pi[t-s]\big)}\right] 
-10^6 K\, \dist{s}{t}^{\alpha}.
\end{equation}
This finishes the proof of the induction step. Thus, we have proved \eqref{eqLogObsApproxInduction}.

\medskip

\textbf{Step 3.} Now, we show that if $\phi \in \cap_{n=R}^{\infty} \mathsf{A}_n$, then for any $s, t \in \cup_{n=R}^{\infty} \Dyadic{n}$ with $\dist{s}{t}\leq 2^{-R}$, we have
\begin{equation}\label{eqLogObsApproxUnifStep2Goal}
 \left|\obs{\phi}{s}{t}-\frac{\pi}{\sin\big(\pi[t-s]\big)}\right|
 < e^{10^8 K}\, \dist{s}{t}^{\alpha-1}.
\end{equation}

Let $n\geq R$ be such that $2^{-n+2} < \dist{s}{t}\leq 2^{-n+3}$.
Also, let $m\geq n$ be such that $s, t\in \Dyadic{m}$. 
Since $ \dist{s}{t}> 4\cdot 2^{-n}$, there exist $r_1, r_2\in \Dyadic{n}$ between $s$ and $t$ (on the shorter arc connecting $s$ and $t$) such that $\dist{r_1}{r_2} = 2^{-n}$, $ \dist{s}{r_1}\geq 2^{-n}$, $\dist{r_2}{t}\geq 2^{-n}$,
and such that points $s, r_1, r_2, t$ lie on $\T$, either in clockwise or anticlockwise order.
Applying Lemma \ref{lmmObsAlgEquality}, we get
\begin{equation}\label{eqObsApproxEquality}
\frac{1}{\obs{\phi}{s}{t}} = \obs{\phi}{r_1}{r_2}\Biggl(\frac{1}{\obs{\phi}{s}{r_2}\obs{\phi}{r_1}{t}}
-\frac{1}{\obs{\phi}{s}{r_1}\obs{\phi}{r_2}{t}}\Biggl)
\end{equation}
Since $\dist{s}{t}\leq 8\cdot 2^{-n}$, we can apply \eqref{eqLogObsApproxInduction} to estimate all terms in the right-hand side of equation above. 
Notice that, for any $\gamma\in [0,1]$ and any $x\in \R$, we have $|e^{\gamma x}-1|\leq |\gamma| e^{|x|}$. 
Combining this inequality for $\gamma = \dist{s}{t}^{\alpha}$ with \eqref{eqLogObsApproxInduction}, we obtain
\begin{multline}\label{eqObsApproxEqualityFirstTermBound}
\left|\frac{\obs{\phi}{r_1}{r_2}}{\obs{\phi}{s}{r_2}\obs{\phi}{r_1}{t}}
-\frac{\sin\big(\pi[r_2-s]\big)\sin\big(\pi[t-r_1]\big)}{\pi \sin\big(\pi[r_2-r_1]\big)}\right|\\
\leq \frac{\sin\big(\pi[r_2-s]\big)\sin\big(\pi[t-r_1]\big)}{\pi \sin\big(\pi[r_2-r_1]\big)} \, e^{10^7 K+10}\, 2^{-\alpha n}
\leq e^{10^7 K+50}\, 2^{-(\alpha+1)n}.
\end{multline}
Similarly,
\begin{multline}\label{eqObsApproxEqualitySecondTermBound}
\left|\frac{\obs{\phi}{r_1}{r_2}}{\obs{\phi}{s}{r_1}\obs{\phi}{r_2}{t}}
-\frac{\sin\big(\pi[r_1-s]\big)\sin\big(\pi[t-r_2]\big)}{\pi \sin\big(\pi[r_2-r_1]\big)}\right|\\
\leq \frac{\sin\big(\pi[r_1-s]\big)\sin\big(\pi[t-r_2]\big)}{\pi \sin\big(\pi[r_2-r_1]\big)}\, e^{10^7 K+10}\, 2^{-\alpha n}
\leq e^{10^7 K+50}\, 2^{-(\alpha+1)n}.
\end{multline}

It is easy to check that
\begin{equation}
\frac{\sin\big(\pi[r_2-s]\big)\sin\big(\pi[t-r_1]\big)}{\pi \sin\big(\pi[r_2-r_1]\big)} 
- \frac{\sin\big(\pi[r_1-s]\big)\sin\big(\pi[t-r_2]\big)}{\pi \sin\big(\pi[r_2-r_1]\big)}
=
\frac{\sin\big(\pi[t-s]\big)}{\pi},
\end{equation}
therefore, combining this with  \eqref{eqObsApproxEquality}, \eqref{eqObsApproxEqualityFirstTermBound}, and \eqref{eqObsApproxEqualitySecondTermBound}, we obtain
\begin{equation}
\left|\frac{1}{\obs{\phi}{s}{t}}-\frac{\sin\big(\pi[t-s]\big)}{\pi}\right| 
\leq
e^{10^7 K+60}\, 2^{-(\alpha+1)n} 
\leq e^{10^7 K+60}\, \dist{s}{t}^{\alpha+1}.
\end{equation}
Thus, using $\dist{s}{t}\leq 2^{-R}$, we deduce that
\begin{equation}
\left|\obs{\phi}{s}{t}-\frac{\pi}{\sin\big(\pi[t-s]\big)}\right| 
\leq e^{10^7 K+100} \, \dist{s}{t}^{\alpha-1}.
\end{equation}

\medskip

\textbf{Step 4.} It follows from \eqref{eqLogObsApproxUnifStep2Goal} and continuity of $\obs{\phi}{s}{t}$ in $s$ and $t$ that 
\begin{equation}
\MyHol{\alpha}{2^R}\supset \cap_{n=R}^{\infty} \mathsf{A}_n.
\end{equation}
Combining this with \eqref{eqCapALowerBound}, we finish the proof.
\end{proof}

\section{Proof of Proposition~\ref{prp_action_min_constrained}}
We recall the Proposition~\ref{prp_action_min_constrained}, which we prove in this section.
\begin{customprp}{\ref{prp_action_min_constrained}}
Let $\psi: [t_1, t_2]\to [p_1, p_2]$ be the minimizer of the functional
\begin{equation}
\phi\mapsto \dfrac{1}{2}\displaystyle\int_{t_1}^{t_2} 
\left[\Big(\log \phi'(\tau)\Big)'^{\, 2}-4\pi^2\phi'^{\, 2}(\tau)\right] \d\tau
\end{equation}
on the space $\LogSob(\T)$ under constraints 
\begin{align}
\phi(t_1) &= p_1, \quad \phi(t_2) = p_2,\\
\phi'(t_1) &= q_1, \quad \phi'(t_2) = q_2.
\end{align}
Denote
\begin{equation}
\varkappa := \frac{\pi\sqrt{q_1 q_2}(t_2-t_1)}{\sin\big(\pi(p_2-p_1)\big)}.
\end{equation}
Then for some $ a, b, c, d\in \R$ with $ad-bc= 1$ and any $\tau\in[t_1, t_2]$: 
\begin{enumerate}
\item If $\varkappa>1$,
then
\begin{equation}
\tan\big(\pi\, \psi(\tau)\big) = \frac{a\tan(\lambda\tau) +b}{c \tan(\lambda\tau) +d},
\end{equation}
where $\lambda\in(0, \pi/(t_2 -t_1))$ is such that
\begin{equation}
\frac{\lambda (t_2-t_1)}{\sin\big(\lambda(t_2-t_1)\big)}
= \varkappa.
\end{equation}

\item  If $\varkappa=1$,
then
\begin{equation}
\tan\big(\pi\,\psi(\tau)\big) = \frac{a \,\tau +b}{c \,\tau +d}.
\end{equation}

\item If $\varkappa<1$,
then
\begin{equation}
\tan\big(\pi\,\psi(\tau)\big) = \frac{a\tanh(\lambda\tau) +b}{c \tanh(\lambda\tau) +d},
\end{equation}
where $\lambda>0$ is such that
\begin{equation}
\frac{\lambda (t_2-t_1)}{\sinh\big(\lambda(t_2-t_1)\big)}
=\varkappa.
\end{equation}
\end{enumerate}
\end{customprp}

\begin{proof}

Observe that, for any $\phi\in \DiffAppl \cap \Diff^{\infty}(\T)$,
\begin{equation}
\frac{1}{2}\displaystyle\int_{t_1}^{t_2} 
\left[\Big(\log \phi'(\tau)\Big)'^{\, 2}-4\pi^2\phi'^{\, 2}(\tau)\right] \d\tau 
= 
\frac{\phi''(t_2)}{\phi'(t_2)}-\frac{\phi''(t_1)}{\phi'(t_1)} -\int_{t_1}^{t_2} \Big[\Schw\big(\phi, \tau\big) + 2\pi^2\phi'^{\, 2}(\tau)\Big]\d\tau.
\end{equation}
Fix any $p\in \T\backslash [p_1,  p_2]$, and denote $f = \tan(\pi\,\phi -\pi p -\frac{\pi}{2} )$. Then
\begin{equation}
\int_{t_1}^{t_2} \Big[\Schw\big(\phi, \tau\big) + 2\pi^2\phi'^{\, 2}(\tau)\Big]\d\tau 
= 
\int_{t_1}^{t_2} \Schw\big(f, \tau\big) \d\tau 
=
\frac{f''(t_2)}{f'(t_2)} - \frac{f''(t_1)}{f'(t_1)}
-\frac{1}{2}\int_{a_1}^{a_2} \left(\dfrac{f''(\tau)}{f'(\tau)}\right)^2 \d\tau.
\end{equation}
Since 
\begin{equation}
\frac{f''(\tau)}{f'(\tau)} = \frac{\phi''(\tau)}{\phi'(\tau)} - 2\pi\phi'(\tau)\tan\big(\pi \phi(\tau)-\pi p-\tfrac{\pi}{2}\big),
\end{equation}
we get that
\begin{multline}
\frac{1}{2}\displaystyle\int_{t_1}^{t_2} 
\left[\Big(\log \phi'(\tau)\Big)'^{\, 2}-4\pi^2\phi'^{\, 2}(\tau)\right] \d\tau \\
= 
2\pi\phi'(t_2)\tan\big(\pi \phi(t_2)-\pi p-\tfrac{\pi}{2}\big) - 2\pi\phi'(t_1)\tan\big(\pi \phi(t_1)-\pi p-\tfrac{\pi}{2}\big)+
\frac{1}{2}\int_{t_1}^{t_2} \Big(\log f'(\tau)\Big)'^{\, 2} \d\tau.
\end{multline}
The equality above can be extended to every $\phi\in \DiffAppl\cap \LogSob(\T)$, because $ \DiffAppl \cap \Diff^{\infty}(\T)$ is dense in $\DiffAppl \cap \LogSob$ with respect to $\dist{\cdot}{\cdot}_{\LogSob}$.
Since we keep $\phi(t_1), \phi(t_2), \phi'(t_1), \phi'(t_2)$ fixed, it is sufficient to minimize $\frac{1}{2}\int_{t_1}^{t_2} \big(\log f'(\tau)\big)'^{\, 2} \d\tau$ 
with fixed $f(t_1), f(t_2), f'(t_1), f'(t_2)$.

Denote $g = \log f'$. Notice that
\begin{equation}
\frac{1}{2}\int_{t_1}^{t_2} \left(\frac{f''(\tau)}{f'(\tau)}\right)^2 \d\tau 
= 
\frac{1}{2}\int_{t_1}^{t_2} g'^{\, 2}(\tau) \d\tau.
\end{equation}
Therefore, we want to minimize $\frac{1}{2}\int_{t_1}^{t_2} g'^{\, 2}(\tau) \d\tau$ while keeping $g(t_1), g(t_2), \int_{t_1}^{t_2}e^{g(\tau)}\d\tau$ fixed.

\begin{lmm}\label{lmmApplicationDerivOfLogDiffEq}
There exist $\alpha, \beta, \lambda\in \R$ such that one of the following holds for any $\tau\in [t_1, t_2]$:
\begin{align}
\label{eqlmmApplicationDerivOfLogDiffEq1}
g(\tau) &= \alpha+ \lambda \tau,  
\\
\label{eqlmmApplicationDerivOfLogDiffEq3}
g(\tau) &= \alpha- \log \cosh^2(\beta+\lambda \tau), 
\\
\label{eqlmmApplicationDerivOfLogDiffEq4}
g(\tau) &= \alpha- \log \sinh^2(\beta+\lambda \tau),
\\
\label{eqlmmApplicationDerivOfLogDiffEq5}
g(\tau) &= \alpha- \log \cos^2(\beta+\lambda \tau),
\\
g(\tau) &= - 2\log |\beta+\lambda \tau|.\label{eqlmmApplicationDerivOfLogDiffEq2}
\end{align}
\end{lmm}
\begin{proof}
Using Lagrange multipliers, by varying $\frac{1}{2}\int_{t_1}^{t_2} g'^{\, 2}(\tau) \d\tau$, we get
\begin{equation}\label{eqApplicDiffEqG}
g''(\tau) + \lambda_0 e^{g(\tau)}=0,
\end{equation}
for some $\lambda_0\in \R$.
Below we show that for any given $\lambda_0\in \R, g(t_1)\in \R$, and $g'(t_1)>0$, we can find a solution of the form \eqref{eqlmmApplicationDerivOfLogDiffEq1} -- \eqref{eqlmmApplicationDerivOfLogDiffEq2}, which solves the same Cauchy problem as $g$.
In other words, it also satisfies \eqref{eqApplicDiffEqG} and has the same value and derivative as $g$ at the point $t_1$. 
By uniqueness theorem for differential equations, we will get that $g$ has the form \eqref{eqlmmApplicationDerivOfLogDiffEq1} -- \eqref{eqlmmApplicationDerivOfLogDiffEq2}.

We consider three cases, based on the sign of $\lambda_0$:

\textbf{Case 1.} If $\lambda_0 =0$, then $g$ is a linear function, so for some $\alpha, \lambda\in \R$
\begin{equation}
g(\tau) = \alpha + \lambda \tau.
\end{equation}

\textbf{Case 2.} If $\lambda_0 >0$, then for any $u, v$,
the function
\begin{equation}
F_{u, v}(\tau) = u- \log \cosh^2 \left(v+e^{u/2}\sqrt{\frac{\lambda_0}{2}}\, \tau\right)
\end{equation}
also satisfies the differential equation \eqref{eqApplicDiffEqG}.
Now, we show that we can find such $u, v\in \R$ that $F_{u, v}(t_1) = g(t_1)$ and $F_{u, v}'(t_1) = g'(t_1)$. 
First, to ensure that $F_{u, v}'(t_1) = g'(t_1)$, we take $v=v(u)\in \Compl$ such that 
\begin{equation}
\tanh \left(v+e^{u/2}\sqrt{\frac{\lambda_0}{2}}\, t_1\right)
=
-\frac{1}{2} e^{-u/2}\sqrt{\frac{2}{\lambda_0}} g'(t_1).
\end{equation}
Then, using $\cosh^{-2}(x) = 1- \tanh^2(x)$,
\begin{equation}
-\log {\cosh^2 \left(v+e^{u/2}\sqrt{\frac{\lambda_0}{2}}\, \tau\right)} = \log \left(1-\frac{1}{2\lambda_0}e^{-u}g'^{\, 2}(t_1)\right)
\end{equation}
Now, to ensure that $F_{u, v}(t_1) = g(t_1)$, it remains to choose such $u\in \R$ that
\begin{equation}
\log \left(e^u-\frac{1}{2\lambda_0}g'^{\, 2}(t_1)\right) = g(t_1).
\end{equation}
Notice that we can also choose $v=v(u)$ to be real, since it follows from the equation above that $\left|\frac{1}{2} e^{-u/2}\sqrt{\frac{2}{\lambda_0}} g'(t_1)\right|<1$.
Therefore, by the uniqueness theorem for differential equations we obtain $\tilde{g}_{u, v}(\tau) = g(\tau)$ for all $\tau$.

\textbf{Case 3.} If $\lambda_0 <0$, then for any $u, v$ functions
\begin{align}
F_{u, v}(\tau) &= u-  \log \sinh^2 \left(v+e^{u/2}\sqrt{-\frac{\lambda_0}{2}}\, \tau\right)\\
G_{u, v}(\tau) &= u-  \log \cos^2 \left(v+e^{u/2}\sqrt{-\frac{\lambda_0}{2}}\, \tau\right)\\
H_{v}(\tau) &= - 2 \log \left|v+\sqrt{-\frac{\lambda_0}{2}}\, \tau\right|
\end{align}
also satisfy the differential equation \eqref{eqApplicDiffEqG}.
We consider three subcases:

\textit{Case i.} Suppose that $-\frac{1}{2\lambda_0}g'^{\, 2}(t_1)-e^{g(t_1)}>0$. 
In this case we find $u, v$ such that $F_{u, v}(t_1) = g(t_1)$ and $F_{u, v}'(t_1) = g'(t_1)$. 

First, to ensure that $F_{u, v}'(t_1) = g'(t_1)$, we take $v=v(u)\in \Compl$ such that 
\begin{equation}
\coth \left(v+e^{u/2}\sqrt{-\frac{\lambda_0}{2}}\, t_1\right)
=-\frac{1}{2} e^{-u/2}\sqrt{-\frac{2}{\lambda_0}} g'(t_1).
\end{equation}
Then, using $\sinh^{-2}(x) = \coth^2(x)-1$,
\begin{equation}
-\log {\sinh^2 \left(v+e^{u/2}\sqrt{-\frac{\lambda_0}{2}}\, \tau\right)} = \log \left(-\frac{1}{2\lambda_0}e^{-u}g'^{\, 2}(t_1)-1\right)
\end{equation}
Now, to ensure that $F_{u, v}(t_1) = g(t_1)$, it remains to choose such $u\in \R$ that
\begin{equation}
\log \left(-\frac{1}{2\lambda_0}g'^{\, 2}(t_1)-e^u\right) = g(t_1),
\end{equation}
which can be done because $-\frac{1}{2\lambda_0}g'^{\, 2}(t_1)-e^{g(t_1)}>0$. 
Notice that we can also choose $v=v(u)$ to be real, since it follows from the equation above that $\left|\frac{1}{2} e^{-u/2}\sqrt{-\frac{2}{\lambda_0}} g'(t_1)\right|>1$.
Therefore, by the uniqueness theorem for differential equations, we obtain $F_{u, v}(\tau) = g(\tau)$ for all $\tau$.

\textit{Case ii.} Suppose that $-\frac{1}{2\lambda_0}g'^{\, 2}(t_1)-e^{g(t_1)}<0$.
In this case we find $u, v$ such that $G_{u, v}(t_1) = g(t_1)$ and $G_{u, v}'(t_1) = g'(t_1)$. 

First, to ensure that $G_{u, v}'(t_1) = g'(t_1)$, we take $v=v(u)\in \Compl$ such that 
\begin{equation}
\tan \left(v+e^{u/2}\sqrt{-\frac{\lambda_0}{2}}\, t_1\right)
=\frac{1}{2} e^{-u/2}\sqrt{-\frac{2}{\lambda_0}} g'(t_1).
\end{equation}
Then, using $\cos^{-2}(x) = \tan^2(x)+1$,
\begin{equation}
-\log {\sinh^2 \left(v+e^{u/2}\sqrt{-\frac{\lambda_0}{2}}\, \tau\right)} = \log \left(-\frac{1}{2\lambda_0}e^{-u}g'^{\, 2}(t_1)+1\right)
\end{equation}
Now, to ensure that $G_{u, v}(t_1) = g(t_1)$, it remains to choose $u\in \R$ such that
\begin{equation}
\log \left(-\frac{1}{2\lambda_0}g'^{\, 2}(t_1)+e^u\right) = g(t_1),
\end{equation}
which can be done because $-\frac{1}{2\lambda_0}g'^{\, 2}(t_1)-e^{g(t_1)}<0$. 
Therefore, by the uniqueness theorem for differential equations, we obtain $G_{u, v}(\tau) = g(\tau)$ for all $\tau$.

\textit{Case iii.} Suppose that $-\frac{1}{2\lambda_0}g'^{\, 2}(t_1)-e^{g(t_1)}=0$.
It is easy to check that taking $v=e^{-g(t_1)/2}-\sqrt{-\frac{\lambda_0}{2}} t_1$, we get that $H_{v}(t_1) = g(t_1)$ and $H_{v}'(t_1) = g'(t_1)$. 
Therefore, by the uniqueness theorem for differential equations we obtain $H_{u, v}(\tau) = g(\tau)$ for all $\tau$.
\end{proof}

Therefore, using Lemma \ref{lmmApplicationDerivOfLogDiffEq}, it is easy to show that one of the following holds for some $ a, b, c, d\in \R$ with $ad-bc= \pm 1$, $\lambda>0$ and any $\tau\in[t_1, t_2]$: 
\begin{align}
g(\tau) &= \frac{a\tan(\lambda\tau) +b}{c \tan(\lambda\tau) +d}; \label{eqApplicFFormulaTan}\\
g(\tau) &= \frac{a\tanh(\lambda\tau) +b}{c \tanh(\lambda\tau) +d};\label{eqApplicFFormulaTanh}\\
g(\tau) &= \frac{a \,\tau +b}{c \,\tau +d}.\label{eqApplicFFormulaMob}
\end{align}
Here \eqref{eqApplicFFormulaTan} corresponds to \eqref{eqlmmApplicationDerivOfLogDiffEq3} and \eqref{eqlmmApplicationDerivOfLogDiffEq4}, \eqref{eqApplicFFormulaTanh} corresponds to \eqref{eqlmmApplicationDerivOfLogDiffEq1} and \eqref{eqlmmApplicationDerivOfLogDiffEq2},
and
\eqref{eqApplicFFormulaMob} corresponds to \eqref{eqlmmApplicationDerivOfLogDiffEq5}
Notice that if \eqref{eqApplicFFormulaTan} holds, then $\lambda<\pi/(t_2-t_1)$, since otherwise $g$ is not one to one on $[t_1, t_2]$. 
Moreover, since $g$ is monotonically increasing, we get that $ad-bc = 1$.

Furthermore, notice that 
\begin{equation}
\frac{\pi \sqrt{\psi'(t_1)\psi'(t_2)}(t_2-t_1)}{\sin\Big( \pi\big[\psi(t_2)-\psi(t_1)\big]\Big)}  
= 
\displaystyle\frac{\sqrt{g'(t_1)g'(t_2)}(t_2-t_1)}{g(t_2)-g(t_1)}
\end{equation}
if \eqref{eqApplicFFormulaTan} holds, then
\begin{equation}
\displaystyle\frac{\sqrt{g'(t_1)g'(t_2)}(t_2-t_1)}{g(t_2)-g(t_1)} 
=
\frac{\lambda(t_2-t_1)}{\sin\big(\lambda(t_2-t_1)\big)} \in (1, \infty).
\end{equation}
If \eqref{eqApplicFFormulaTanh} holds, then
\begin{equation}
\displaystyle
\frac{\sqrt{g'(t_1)g'(t_2)}(t_2-t_1)}{g(t_2)-g(t_1)} 
= 
\frac{\lambda(t_2-t_1)}{\sinh\big(\lambda(t_2-t_1)\big)}\in (0, 1).
\end{equation}
If \eqref{eqApplicFFormulaMob} holds, then
\begin{equation}
\displaystyle
\frac{\sqrt{g'(t_1)g'(t_2)}(t_2-t_1)}{g(t_2)-g(t_1)} 
= 1.
\end{equation}
This finishes the proof of the proposition.
\end{proof}

\newpage
\appendix

\section{Appendix}

\begin{stm}\label{stmLogSinTriangle}
For any $x\in (0, \pi/2)$ and $y \in (-x/2, x/2)$, 
\begin{equation}
\Big|\log \sin(x) - \log \sin(x+y) \Big|
\leq \frac{1000\, y}{x}.
\end{equation}
\end{stm}
\begin{proof}
This follows immediately from the fact that
\begin{equation}
\left|\frac{\d}{\d u} \log \sin (u) \right| = \left|\tan(u)\right| \leq \frac{100}{u},
\qquad \text{for } u\in (0, 3\pi/4).
\end{equation}
\end{proof}

\begin{stm} \label{stmArccoshTaylor}
There exists $C>0$ such that for all $x\geq 0$ and $y\in [-\frac{1}{2}, \frac{1}{2}]$, 
\begin{equation}
\arccosh^2\left[\cosh\left(x\right)+y\right]
\geq
x^2 + 2y - C|y|(x^2+|y|),
\end{equation}
and for all $x\geq 10$ and $y\in [-\frac{1}{2}, \frac{1}{2}]$, 
\begin{equation}\label{eq_Arccosh_Taylor_Two}
\left|\arccosh^2\left[\cosh\left(x\right)+y\right]-x^2\right| 
\leq
Cy \, e^{-x/2}.
\end{equation}
\end{stm}

\begin{proof}
Notice that for $u\geq 1$,
\begin{equation}
\frac{\d}{\d u}\arccosh^2\left[u\right]
=
2 \frac{\arccosh\left[u\right]}{\sqrt{u^2-1}},
\end{equation}
where both sides are analytic in $\D_1$.
Inequality \eqref{eq_Arccosh_Taylor_Two} follows since the right-hand side is smaller than $1/\sqrt{u}$ for large $u$.
Moreover, for some $C_1>0$,
\begin{equation}
\left|\frac{\d^2}{\d u^2}\arccosh^2\left[u\right]\right|
\leq  C_1,
\end{equation}
and
\begin{equation}
\frac{\d}{\d u}\arccosh^2\left[u\right]\Big|_{u=1} = 2.
\end{equation}
It is also easy to see that for some $C_2>0$,
\begin{equation}
\left|\frac{\d}{\d u}\arccosh^2\left[u\right] -2\right| 
\leq C_2\, \min\big\{ |u|-1, 1\big\}.
\end{equation}
Taylor expanding $\arccosh^2\left[\cosh\left(x\right)+y\right]$ in $y$ and using the fact that $\min\big\{ \cosh(x)-1, 1\big\}\leq C_3 x^2$ for some $C_3>0$ gives the desired result.
\end{proof}

\begin{stm}\label{stmAppendixIneqSin}
For any $x>0$ and $\tau\in \R$,
\begin{equation}
\left|\frac{\sin\left(xt\right)}{x} - \sin t\right|\leq \frac{1}{6}|x^2-1|t^3.
\end{equation} 
\end{stm}

\begin{proof}
We write
\begin{equation}\label{eqStmAppendixIneqSinIntegral}
\frac{\sin\left(xt\right)}{x} -  \sin t= \int_0^t \big(\cos(x\tau) - \cos(\tau)\big)\d\tau
\end{equation}
and notice that from the inequality $|\sin y|\leq y$, we obtain
\begin{equation}
\big|\cos(x\tau) - \cos(\tau)\big| = 
2\left|\sin\left(\frac{\tau(x+1)}{2}\right)\sin\left(\frac{\tau(x-1)}{2}\right)\right|
\leq \frac{1}{2} |x^2-1| \tau^2,
\end{equation}
which together with \eqref{eqStmAppendixIneqSinIntegral} gives the desired result.
\end{proof}

\begin{stm}\label{stmAppendixSLDerivBounds}
There exists $C>0$ such that for every $\eps \in (0,1/5)$ if M\"{o}bius transformation of the unit circle $\phi \in \SL(2, \R)$ satisfies
\begin{equation}
\left|\phi\left(\tfrac{1}{3}\right) - \phi(0)-\tfrac{1}{3}\right|<\eps, \qquad
 \left|\phi\left(\tfrac{2}{3}\right) - \phi\left(\tfrac{1}{3}\right)-\tfrac{1}{3}\right|<\eps, \qquad 
 \left|\phi\left(1\right) - \phi(\tfrac{2}{3})-\tfrac{1}{3}\right|<\eps, 
\end{equation}
then for any $t\in \T$,
\begin{align}
\big|\phi'(t)-1\big|<C\eps,\label{stmAppendixSLDerivBoundsOne}\\
\big|\phi''(t)\big|<C\eps\label{stmAppendixSLDerivBoundsTwo}.
\end{align}
\end{stm}

\begin{proof}
By rotational invariance of the properties in question, it is sufficient to prove the statement only for $t\in[1/3, 2/3]$ and we can assume that $\phi(0) = 0$.
Therefore,
\begin{equation}
\left|\phi\left(\tfrac{1}{3}\right) - \tfrac{1}{3}\right| \leq\eps, \qquad 
\left|\phi\left(\tfrac{2}{3}\right) - \tfrac{2}{3}\right| \leq\eps.
\end{equation}

Denote
\begin{equation}
F(\tau) = -\cot\left(\pi\, \phi\left(\frac{\arccot(-\tau)}{\pi}\right)\right).
\end{equation}
Notice that $F(\tau)$ is a M\"{o}bius transformation of the upper half-plane, that is
\begin{equation}
F(\tau) = \frac{a\tau+b}{c\tau+d}, \qquad \text{for some } a, b, c, d\in\R, \text{with } ad-bc = 1.
\end{equation}

From the fact that $\phi(0) = 0$, it follows that $F(\infty) = \infty$. Thus, $F$ is linear
\begin{equation}
F(\tau) = a\tau+b, \qquad \text{for some } a>0, \text{ and } b\in\R.
\end{equation}
Moreover, for some $C_1>0$, we have
\begin{equation}
\left|F\left(\frac{1}{\sqrt{3}}\right)-\frac{1}{\sqrt{3}}\right| < C_1\eps,\qquad
\left|F\left(-\frac{1}{\sqrt{3}}\right)+\frac{1}{\sqrt{3}}\right|<C_1 \eps.
\end{equation}
Therefore,
\begin{equation}
\left|\frac{a-1}{\sqrt{3}}+b\right|<C_1\eps, \qquad \left|-\frac{a-1}{\sqrt{3}}+b\right|<C_1\eps.
\end{equation}
Thus, for some $C_2>0$, we have 
\begin{equation}
|a-1|<C_2\eps, \qquad |b|<C_2\eps.
\end{equation}
Therefore, for some $C_3>0$ and each $\tau\in [-1/\sqrt{3}, 1/\sqrt{3}]$, 
\begin{equation}\label{eqStmAppendixSLDerivBoundsChangeOfVariableBound}
|F(\tau)-\tau|<C_3\eps, \qquad |F'(\tau)-1|<C_3\eps.
\end{equation}

From the equality
\begin{equation}
\phi(t) = \frac{1}{\pi}\arccot \Big(- F\big[ -\cot(\pi t) \big] \Big),
\end{equation}
we obtain
\begin{equation}\label{eqStmAppendixSLPhiFormula}
\phi'(t) = \arccot'\Big(-F\big[-\cot(\pi t)\big]\Big)F'\big[-\cot(\pi t)\big]\cot'(\pi t).
\end{equation}
Using \eqref{eqStmAppendixSLDerivBoundsChangeOfVariableBound}, 
the fact that
\begin{equation}\label{eqStmAppendixSLArccotCotProd}
\arccot'\big[\cot(\pi t)\big]\cot'(\pi t) = 1
\end{equation}
and the fact that
\begin{equation}\label{eqStmAppendixSLCotImage}
\forall t\in[1/3, 2/3]:\qquad -\cot(\pi t) \in [-1/\sqrt{3}, 1/\sqrt{3}]
\end{equation}
we finish the proof of \eqref{stmAppendixSLDerivBoundsOne}.

In order to prove \eqref{stmAppendixSLDerivBoundsTwo}, we notice that
\begin{multline}
\phi''(t) = \pi\arccot''\Big(-F\big[-\cot(\pi t)\big]\Big)\Big(F'\big[-\cot(\pi t)\big]\cot'(\pi t)\Big)^2
\\
+\pi\arccot'\Big(-F\big[-\cot(\pi t)\big]\Big)F'\big[-\cot(\pi t)\big]\cot''(\pi t)
.
\end{multline}
Combining this with \eqref{eqStmAppendixSLDerivBoundsChangeOfVariableBound}, \eqref{eqStmAppendixSLCotImage}, and the observation that
\begin{equation}
0 = \Big(\arccot'\big[\cot(\pi t)\big]\cdot \cot'(\pi t)\Big)' 
= \pi\arccot''\big[\cot(\pi t)\big]\cdot \big(\cot'(\pi t)\big)^2 + \pi\arccot'\big[\cot(\pi t)\big]\cdot \cot''(\pi t)
\end{equation}
we finish the proof of \eqref{stmAppendixSLDerivBoundsTwo}.
\end{proof}

\section*{Acknowledgements}
The author thanks Roland Bauerschmidt, James Norris, and Peter Wildemann for many helpful discussions and careful proofreading.

\bibliography{Schwarzian}

\end{document}